\numberwithin{equation}{section}
\let\OLDthebibliography\thebibliography
\renewcommand\thebibliography[1]{
  \OLDthebibliography{#1}
  \setlength{\parskip}{0pt}
  \setlength{\itemsep}{2pt plus 0.5ex}
}
\def\@cite#1#2{{\m@th\upshape\bfseries%
[{#1\if@tempswa{\m@th\upshape\mdseries, #2}\fi}]}}
\theoremstyle{plain}
\newtheorem{theorem}{Theorem}[section]
\newtheorem{corollary}[theorem]{Corollary}
\newtheorem{proposition}[theorem]{Proposition}
\newtheorem{lemma}[theorem]{Lemma}
\theoremstyle{definition}
\newtheorem{example}[theorem]{Example}
\newtheorem{remark}[theorem]{Remark}
\newtheorem{question}[theorem]{Question}
\theoremstyle{remark}
\newcommand{\bbC}{{\mathbb{C}}}
\newcommand{\bbD}{{\mathbb{D}}}
\newcommand{\bbT}{{\mathbb{T}}}
\newcommand{\A}{{\mathcal{A}}}
\newcommand{\B}{{\mathcal{B}}}
\newcommand{\C}{{\mathcal{C}}}
\newcommand{\I}{{\mathcal{I}}}
\renewcommand{\phi}{\varphi}
\newcommand{\upchi}{{\raise.35ex\hbox{\ensuremath{\chi}}}}
\newcommand{\id}{{\operatorname{id}}}
\newcommand\diag{\mathop{\rm diag}}
\newcommand{\cmax}{\mathrm{C}^*_{\text{max}}}
\newcommand{\cstarlattice}{\text{C$^*$-Lat}}
\newcommand{\Cmax}{C^\ast_\text{max}}
\newcommand{\Ce}{C^\ast_\text{e}}
\begin{document}
\title{A C*-cover lattice dichotomy}

\author{Adam~Humeniuk}
\address{Department of Mathematics and Computing, Mount Royal University, Calgary, AB, Canada}
\email{ahumeniuk@mtroyal.ca}

\author{Christopher~Ramsey}
\address {Department of Mathematics and Statistics, MacEwan University, Edmonton, AB, Canada}
\email{ramseyc5@macewan.ca}

\author{Marcel~Scherer}
\address{Fachrichtung Mathematik, Universit\"at des Saarlandes, 66123 Saarbr\"ucken, Germany}
\email{scherer@math.uni-sb.de}

\begin{abstract}
In this paper, we show that the lattice of C*-covers of a non-selfadjoint operator algebra is either one point or uncountable. We prove that there are non-selfadjoint operator algebras with a one-point lattice in two ways: as an explicit subalgebra of the C*-algebra of a universal contraction, and via a direct limit construction inspired by the work of Kirchberg and Wassermann for operator systems. We also establish that the C*-envelope need not have an immediate successor C*-cover in the lattice, and that a semi-Dirichlet non-selfadjoint operator algebra never has a one-point lattice.
\end{abstract}

\thanks{2020 {\it  Mathematics Subject Classification.}
47L55, 
46L05,  
}
\thanks{{\it Key words and phrases:} C*-cover, operator algebra, non-selfadjoint}

\maketitle

\section{Introduction}

What do the self-adjoint structures that a non-selfadjoint operator algebra generates tell us about the original operator algebra? It is known that these C*-algebras form a complete lattice under a natural equivalence and so we explore the most basic forms this collection could take. Is there a non-selfadjoint operator algebra that generates a unique C*-algebra or is that a test of self-adjointness? A finite lattice? A totally ordered lattice?

Given an operator algebra $A$, a C*-algebra is called a \textit{C*-cover} if it is generated by the image of a completely isometric representation of $A$. Two such covers are equivalent if there is a *-isomorphism between the C*-algebras that is compatible with the representations of $A$. The collection of all C*-covers is called the \textit{lattice of C*-covers}, is denoted $\cstarlattice(A)$, and is a complete lattice under a natural partial order. This implies that there is a \textit{minimal} $C^*$-cover and a \textit{maximal} $C^*$-cover, historically the most significant self-adjoint structures of an operator algebra. The study and use of the lattice of C*-covers has arisen recently, see \cite{KatRamMem, Thompson, Hamidi, HumRam, HKR, HumRamThom}, and we firmly believe that it will develop into a valuable theory based on the fact that there are so many tractable open questions.

We will recall for the reader the relevant definitions, theorems, and history of C*-covers in Section \ref{Section:Background}. We also carefully work out the theory of direct limits of operator algebras and their relation to C*-covers.
After this, we investigate the cardinality of $\cstarlattice(A)$ and address Question 3.1 in \cite{HumRam}: Is there a non-selfadjoint operator algebra with only one equivalence class of $C^*$-covers? We answer this question affirmatively in Section \ref{Section:non-trivial one point lattice example} by proving the following theorem.
\vskip 6 pt
\noindent {\bf Theorem \ref{thm:onepointlattice}.} \textit{
    Let $x$ be a universal contraction. Then the operator algebra generated by $1, x, (x^*)^2, (x^*)^3$ is non-selfadjoint and has only, up to equivalence, one $C^*$-cover.
}
\vskip 6 pt
Furthermore, starting from an arbitrary non-selfadjoint operator algebra, we construct an injective inductive sequence of operator algebras whose inductive limit is non-selfadjoint and has only one equivalence class of $C^*$-covers. The construction, inspired by the Kirchberg-Wassermann operator system (see \cite[Proposition 16]{KirWas}), proceeds as follows: we consider an operator algebra $A$, embedded in its maximal $C^*$-cover, and add its Shilov ideal $I$. Repeating this step yields the inductive sequence. Motivated by the use of the object $A+I$, we study in Section \ref{Section:Extension of an operator algebra by a Shilov ideal} the extension of operator algebras by a Shilov ideal, showing that $\cstarlattice(A)$ and $\cstarlattice(A+I)$ are lattice-isomorphic. We also characterize the maximal $C^*$-cover of $A+I$ in terms of that of $A$.\\
Encouraged by the existence of non-selfadjoint operator algebras with a one-point $C^*$-cover lattice, we continue the investigation of the $C^*$-cover lattice with the question: Is there an operator algebra $A$ such that $|\cstarlattice(A)|<\infty$? Combining a classical result of Sarason and Katsnelson, we obtain our main result of this paper in Section \ref{Section:Uncountability of the C*-cover lattice}.
\vskip 6 pt
\noindent {\bf Theorem \ref{thm:maintheorem}.} \textit{
  Let $A$ be a separable operator algebra. Then
    \[
      |C^*\textup{-Lat}(A)|=\begin{cases} 1 & or\\ \mathfrak{c}, & \\ \end{cases}
  \]
  where $\mathfrak{c}$ is the continuum.
}
\vskip 6 pt
For non-separable operator algebras, the cardinality is either 1 or at least $\mathfrak{c}$. Together with the results in \cite{HumRam}, this shows that non-selfadjoint operator algebras contained in finite-dimensional $C^*$-algebras, as well as hyperrigid operator algebras, have at least $\mathfrak{c}$ distinct equivalence classes of $C^*$-covers.\\
While this result rules out many possible structures of $\cstarlattice(A)$, it remains unknown whether the lattice can be totally ordered. The proof of the lattice dichotomy shows that if there exists an immediate successor to the minimal $C^*$-cover, $\cstarlattice(A)$ cannot be totally ordered. In Section \ref{Section:No successor to envelope}, we give an example of an operator algebra for which the minimal $C^*$-cover has no immediate successor, using a result of Gramsch and Luft \cite{Gramsch, Luft} that characterizes the ideals in $\mathcal{B}(H)$ for Hilbert space $H$ of arbitrary dimension.

Finally, we address Question 3.13 of \cite{HKR} by proving:.
\vskip 6 pt
\noindent {\bf Theorem \ref{thm:semidirichlet}.} \textit{
   If $A$ is a non-selfadjoint, semi-Dirichlet operator algebra, then the maximal C*-cover is not a semi-Dirichlet C*-cover.
}
\vskip 6 pt
Together with the lattice dichotomy, this implies that non-selfadjoint semi-Dirichlet operator algebras possess infinitely many different equivalence classes of $C^*$-covers.

\section{Background}\label{Section:Background}

\subsection{C*-covers}

Given an operator algebra $A$, a pair $(\mathcal{A},j)$, consisting of a $C^*$-algebra $\mathcal{A}$ and a completely isometric map $j:A\to\mathcal{A}$ such that $\A = j(A)$, is called a \textit{$C^*$-cover} of $A$. Given two $C^*$-covers $(\mathcal{A}_1,j_1), (\mathcal{A}_2,j_2)$ of $A$ a $*$-homomorphism $\Phi:\mathcal{A}_1\to\mathcal{A}_2$ such that $\Phi j_1=j_2$ is called a \textit{morphism of C*-covers} and is denoted
\[
\Phi : (\mathcal{A}_1,j_1) \rightarrow (\mathcal{A}_2,j_2)\,. 
\]
To the best of our knowledge, C*-covers and their morphisms were first described in \cite[Chapter 2]{Blecher}.
This morphism, when it exists, is in fact unique and thus we simply write 
\[
(\mathcal{A}_2,j_2)\preceq (\mathcal{A}_1,j_1)\,
\]
if such a morphism exists. Uniqueness also implies that if $(\mathcal{A}_2,j_2)\preceq (\mathcal{A}_1,j_1)$ and $(\mathcal{A}_1,j_1)\preceq (\mathcal{A}_2,j_2)$ then there is a $*$-isomorphism $\Phi:\A_1 \rightarrow \A_2$ satisfying $\Phi j_1 = j_2$. In this case, we write 
  \[
    (\mathcal{A}_1,j_1)\sim(\mathcal{A}_2,j_2)\,.
 \]
 
It is straightforward to verify that $\sim$ defines an equivalence relation. Denote by $[\mathcal{A},j]$ the equivalence class of all $C^*$-covers equivalent to $(\mathcal{A},j)$. Then, the \textit{lattice of $C^*$-covers},
  \[
    \cstarlattice(A)=\{[\mathcal{A},j] \ | \ (\mathcal{A},j)\ \textup{$C^*$-cover of A}\},
  \]
forms a set with an inherited partial order, which we denote with the same symbol $\preceq$. For a family $(f_i)_{i\in I}$ in $\cstarlattice(A)$, an element $f\in\cstarlattice(A)$ is called the \textit{join} of $(f_i)_{i\in I}$ if 
  \begin{enumerate}
      \item $f_i\preceq f$ for all $i\in I$,
      \item if $g\in\cstarlattice(A)$ satisfies $f_i\preceq g$ for all $i\in I$, then $f\preceq g$.
  \end{enumerate}
It is straightforward to verify that if such an element $f$ exists, then it is unique. Another term for the join is \textit{least upper bound}. In particular, the join of $\{[\A_i, j_i]\}_{i\in I}$, a collection of C*-covers of $A$, is $[C^*(j(A)), j]$ where
\[
j = \bigoplus_{i\in I} j_i\,.
\]\\
Dually, an element $f\in\cstarlattice(A)$ is called the \textit{meet} of $(f_i)_{i\in I}$ if 
  \begin{enumerate}
      \item $f\preceq f_i$ for all $i\in I$,
      \item if $g\in\cstarlattice(A)$ satisfies $g\preceq f_i$ for all $i\in I$, then $g\preceq f$.
  \end{enumerate}
Again it is straightforward to verify that, if such an element $f$ exists, then it is unique. Another term for the meet is \textit{greatest lower bound}. The meet will be concretely described in terms of boundary ideals in the next subsection. 

A partially ordered set in which every finite subset has a join and a meet is called a \textit{lattice}. In any lattice, the join and meet are traditionally denoted by $\vee$ and $\wedge$, respectively. If every (possibly infinite) subset in a partially ordered set has a join and a meet, then it is called \textit{complete lattice}. 
\begin{proposition}[\cite{Hamidi} and \cite{Thompson}]
    Let $A$ be an operator algebra. Then $\cstarlattice(A)$ with the ordering $\preceq$ forms a complete lattice.
\end{proposition}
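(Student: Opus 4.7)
The plan is to establish joins and meets of arbitrary subsets in $\cstarlattice(A)$. The formula for joins is already stated in the excerpt, so I would first verify it, and then reduce the existence of meets to the existence of joins together with a minimum element in the lattice.

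For the join of a family $\{(\A_\alpha, j_\alpha)\}_{\alpha \in I}$ of $C^*$-covers, set $j := \bigoplus_\alpha j_\alpha : A \to \bigoplus_\alpha \A_\alpha$, where the codomain is the $\ell^\infty$-direct sum of $C^*$-algebras. The map $j$ is completely isometric since $\|[j(a_{rs})]\| = \sup_\alpha \|[j_\alpha(a_{rs})]\| = \|[a_{rs}]\|$ for every matrix $[a_{rs}]$ over $A$. Hence $(\B, j)$ with $\B := C^*(j(A))$ is a $C^*$-cover. For each $\alpha$, the restriction to $\B$ of the coordinate projection $\pi_\alpha : \bigoplus_\beta \A_\beta \to \A_\alpha$ is a $*$-homomorphism sending $j(a) \mapsto j_\alpha(a)$, which gives a morphism $(\B, j) \to (\A_\alpha, j_\alpha)$ and shows $[\A_\alpha, j_\alpha] \preceq [\B, j]$ for every $\alpha$. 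If $(\mathcal{C}, k)$ is any other upper bound with morphisms $\psi_\alpha : \mathcal{C} \to \A_\alpha$, then $\bigoplus_\alpha \psi_\alpha : \mathcal{C} \to \bigoplus_\alpha \A_\alpha$ sends $k(a)$ to $j(a)$, so its image is contained in $\B = C^*(j(A))$, yielding a morphism $(\mathcal{C}, k) \to (\B, j)$. This confirms $[\B, j]$ as the least upper bound.

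For the meet, I would invoke a general lattice-theoretic fact: a poset admitting all non-empty joins and possessing a minimum element is a complete lattice. The minimum element here is the $C^*$-envelope $[\Ce(A), e]$, whose existence is Hamana's theorem, which guarantees that every $C^*$-cover $(\A, j)$ admits a $*$-homomorphism $\A \to \Ce(A)$ sending $j(a) \mapsto e(a)$, so that $[\Ce(A), e] \preceq [\A, j]$. Given a family $\{[\A_\alpha, j_\alpha]\}_{\alpha \in I}$, let $\mathcal{L} \subseteq \cstarlattice(A)$ be the set of all common lower bounds, which is non-empty because $[\Ce(A), e] \in \mathcal{L}$. The join $[\B, j] := \bigvee \mathcal{L}$ exists by the previous paragraph. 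Each $[\A_\alpha, j_\alpha]$ is an upper bound of $\mathcal{L}$ by the defining property of $\mathcal{L}$, so $[\B, j] \preceq [\A_\alpha, j_\alpha]$ for every $\alpha$; hence $[\B, j]$ is itself a lower bound of the original family, and by construction it is the greatest such, providing the desired meet.

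The foundational concerns are twofold. One must verify that $\cstarlattice(A)$ is a set rather than a proper class, which follows from the existence of the maximal $C^*$-cover $[\Cmax(A), \mu]$: every equivalence class then contains a canonical representative that is a quotient of $\Cmax(A)$, bounding the cardinality. One must also ensure that large-index direct sums and the subsequent $C^*$-generated subalgebras remain operator algebras of bounded size; this is again absorbed by working inside quotients of $\Cmax(A)$. An alternative route, hinted at by the authors for the next subsection and originating in the cited works of Hamidi and Thompson, parametrises $\cstarlattice(A)$ by the boundary ideals of $\Cmax(A)$ under reverse inclusion: joins correspond to intersections of ideals (which are easily shown to remain boundary ideals via the same direct-sum argument), and meets correspond to the smallest boundary ideal containing the closed sum, whose existence requires only that arbitrary intersections of boundary ideals stay boundary ideals. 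Either approach suffices; the main conceptual obstacle in both is ensuring the required extremal objects ($\Ce(A)$ or the appropriate boundary ideal) genuinely exist and live inside $\Cmax(A)$.
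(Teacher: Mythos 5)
Your proof is correct. Note that the paper does not actually prove this proposition --- it is imported from the cited works of Hamidi and Thompson --- but the surrounding text indicates the intended mechanism: the join is the direct-sum cover $[C^*(j(A)),j]$ with $j=\bigoplus_i j_i$ (exactly your first paragraph, and your verification that the coordinate projections and the map $\bigoplus_\alpha\psi_\alpha$ give the required morphisms is the right way to fill in the details), while the meet is obtained through the order-reversing bijection with boundary ideals of $C^*_{\max}(A)$ (the paper's Proposition \ref{Prop:Equi Lattice Ideals}), under which meets of covers correspond to joins of boundary ideals. Your meet argument instead uses the purely order-theoretic fact that a poset with all non-empty joins and a minimum element is complete, taking the meet of a family to be the join of its set of lower bounds; this needs Hamana's theorem to supply the minimum $[\Ce(A),i_e]$, but it avoids having to show that the lattice of boundary ideals in $C^*_{\max}(A)$ is itself complete. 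The boundary-ideal route buys a concrete description of the meet (which the paper wants later), whereas your route is shorter but non-constructive. Your attention to the set-theoretic issue is also appropriate: either the bound on the density character of a cover generated by $j(A)$, or the parametrisation by boundary ideals of $C^*_{\max}(A)$, suffices to see that $\cstarlattice(A)$ is a set, and your sketched argument that arbitrary intersections of boundary ideals are again boundary ideals (via the isometric embedding $\A/\bigcap_i I_i\hookrightarrow\bigoplus_i\A/I_i$) is the standard one. I see no gap.
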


Every complete lattice has a unique minimum and maximum element, the meet and join of the whole lattice. With a slight abuse of notation, we will call the minimum element of $\cstarlattice(A)$, and every $C^*$-cover in the corresponding equivalence class, the \textit{$C^*$-envelope} of $A$, denoted by 
  \[
    [C^*_e(A),i_e]\qquad \textup{respectively}\qquad (C^*_e(A),i_e). 
  \]
Which of these two notations is meant will be clear from the context. Nevertheless, it is important to note that, in terms of $C^*$-covers, the $C^*$-envelope is unique up to isomorphism and enjoys the universal property that given any C*-cover $(\A,j)$ of $A$, there is a unique $\ast$-homomorphism $\pi:\A\to C^*_e(A)$ such that $\pi j = i_e$. 
The existence of the $C^*$-envelope is non-trivial and was conjectured by Arveson in \cite{Arv1} and first proved by Hamana in \cite{Hamana}.

Likewise, we will call the maximum element of $\cstarlattice(A)$ the \textit{maximal $C^*$-cover} of $A$, denoted by 
  \[
    [C^*_{max}(A),i_{max}]\qquad \textup{respectively}\qquad (C^*_{max}(A),i_{max}). 
  \]
A proof of its existence can be found in \cite{Blecher}. The following characterization of the maximal $C^*$-cover is well known and will be referred to as \textit{the universal property} of the maximal $C^*$-cover.
\begin{lemma}\label{lem:uni property}
    Let $(\mathcal{A},\pi)$ be a $C^*$-cover of an operator algebra $A$. The following are equivalent:
    \begin{enumerate}
        \item $(\mathcal{A},\pi)$ is maximal with respect to $\preceq$.
        \item For every u.c.c. homomorphism $\rho:A\to\mathcal{B}(H)$ there exists a $*$-homomorphism $\phi:\mathcal{A}\to\mathcal{B}(H)$ such that 
          \[
            \rho=\phi\pi.
          \]
    \end{enumerate}
\end{lemma}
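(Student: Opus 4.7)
The plan is to prove both implications by exploiting the universal property of $C^*$-covers together with a direct sum construction.

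For (1) $\Rightarrow$ (2), I would take an arbitrary u.c.c.\ homomorphism $\rho\colon A \to \mathcal{B}(H)$ and build a $C^*$-cover of $A$ that dominates both $\pi$ and $\rho$. Concretely, the direct sum $\pi \oplus \rho\colon A \to \mathcal{A} \oplus \mathcal{B}(H)$ is completely isometric because $\pi$ already is and each matricial norm on a $C^*$-direct sum is the maximum of the component norms. Setting $\mathcal{B} := C^*\bigl((\pi \oplus \rho)(A)\bigr) \subseteq \mathcal{A} \oplus \mathcal{B}(H)$, the pair $(\mathcal{B}, \pi \oplus \rho)$ is then a $C^*$-cover of $A$. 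Maximality of $(\mathcal{A}, \pi)$ supplies a $*$-homomorphism $\Phi\colon \mathcal{A} \to \mathcal{B}$ with $\Phi\pi = \pi \oplus \rho$. Composing with the second coordinate projection $p_2\colon \mathcal{A} \oplus \mathcal{B}(H) \to \mathcal{B}(H)$, which is a $*$-homomorphism on $\mathcal{B}$, yields $\phi := p_2 \Phi \colon \mathcal{A} \to \mathcal{B}(H)$, and
\[
\phi\pi = p_2 \Phi \pi = p_2(\pi \oplus \rho) = \rho,
\]
as required.

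For (2) $\Rightarrow$ (1), I would take any $C^*$-cover $(\mathcal{B}, j)$ of $A$ and choose a faithful representation $\mathcal{B} \subseteq \mathcal{B}(H)$ on some Hilbert space $H$. Then $j\colon A \to \mathcal{B}(H)$ is in particular a u.c.c.\ homomorphism, so hypothesis (2) produces a $*$-homomorphism $\phi\colon \mathcal{A} \to \mathcal{B}(H)$ with $\phi\pi = j$. Since $\phi$ is continuous and $*$-preserving and $\mathcal{A} = C^*(\pi(A))$, the image satisfies $\phi(\mathcal{A}) = C^*(\phi(\pi(A))) = C^*(j(A)) = \mathcal{B}$. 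So $\phi$ corestricts to a $*$-homomorphism $\mathcal{A} \to \mathcal{B}$ intertwining $\pi$ and $j$, which is exactly the statement $(\mathcal{B}, j) \preceq (\mathcal{A}, \pi)$. Since $(\mathcal{B}, j)$ was arbitrary, $(\mathcal{A}, \pi)$ is maximal.

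The proof is essentially a formal manipulation of universal properties, so I do not anticipate a substantive obstacle. The only point that really needs to be checked is the direct sum step in (1) $\Rightarrow$ (2): one has to verify that $\pi \oplus \rho$ remains a complete isometry (which follows because matricial norms on a $C^*$-direct sum are maxima) and that the unitality conventions match, so that $(C^*((\pi \oplus \rho)(A)), \pi \oplus \rho)$ is genuinely a $C^*$-cover in the sense defined in the background section.
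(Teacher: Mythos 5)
Your proof is correct; the paper states this lemma without proof as a well-known fact, and your argument is the standard one (forming the direct sum $\pi\oplus\rho$ to produce a cover dominating an arbitrary u.c.c.\ representation, and factoring an arbitrary cover through a faithful representation for the converse). The only cosmetic point is that in (2) $\Rightarrow$ (1) you should take the faithful representation of $\mathcal{B}$ to be unital (e.g.\ nondegenerate), so that $j$ remains a \emph{unital} completely contractive homomorphism into $\mathcal{B}(H)$ and hypothesis (2) applies.
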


Finally, we end this subsection with a brief description of what is known about C*-covers. \cite{KatRamMem} and \cite{Hamidi} show that completely isometric automorphisms of $A$ need not lift to all C*-covers. \cite{HumRam} presents four related but distinct equivalence relations capturing ``sameness" for lattices of C*-covers, all of which are weaker than completely isometrically isomorphism. \cite{HKR} shows that the semi-Dirichlet C*-covers of a semi-Dirichlet operator algebra form a complete sublattice. \cite{HumRamThom} shows that the RFD (residually finite-dimensional) C*-covers of an RFD operator algebra often fail to form a complete sublattice.

\subsection{Boundary ideals}
Let $(\mathcal{A},i)$ be a $C^*$-cover of $A$. A closed two-sided ideal $I\subset\mathcal{A}$ is called a \textit{boundary ideal} if the quotient map
  \[
    \mathcal{A}\to\mathcal{A}/I
  \]
is completely isometric on $i(A)$. A boundary ideal that contains every other boundary ideal is called the \textit{Shilov boundary ideal}, or for short the \textit{Shilov ideal}. Note that boundary ideals are always considered with respect to a specific $C^*$-cover, and that the Shilov ideal is unique. 
The existence of the Shilov ideal is equivalent to the existence of the $C^*$-envelope, and hence the Shilov ideal always exists.

The set of boundary ideals in a fixed $C^*$-cover, together with the canonical ordering by inclusion, forms a complete lattice, where the meet of a family $(I_i)_i$ is given by $\bigcap_i I_i$ and the join by the (closed two-sided) ideal $\overline{\sum_i I_i}$ generated by $\bigcup_i I_i$. This complete lattice has a natural connection to $\cstarlattice(A)$, given by the following proposition. For a proof see \cite[Theorem 3.3]{Thompson}.

\begin{proposition}\label{Prop:Equi Lattice Ideals}
    Let $A$ be an operator algebra and $(C^*_{max}(A),i_{max})$ a maximal $C^*$-cover. Then the maps
      \begin{align*}
        \cstarlattice(A)&\to \{\textup{boundary ideals in $C^*_{max}(A)$}\},\\ [\mathcal{B},j]&\longmapsto \ker(\tilde j),
      \end{align*}
    where $\tilde j:C^*_{max}(A)\to\mathcal{B}$ is the unique unital $*$-homomorphism such that
      \[
        \tilde j \circ i_{max}=j,
      \]
      and 
      \[
        \{\textup{boundary ideals in $C^*_{max}(A)$}\}\to\cstarlattice(A),\quad I\longmapsto (C^*_{max}(A)/I,q_I),
      \]
      where $q_I$ is the quotient map by $I$, are order-reversing and mutually inverse.
\end{proposition}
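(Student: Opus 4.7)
The plan is to use the universal property of $C^*_{max}(A)$ (Lemma \ref{lem:uni property}) as the main tool, from which both the existence and uniqueness of the $*$-homomorphism $\tilde{j}$ and the canonical quotient structure follow naturally. The proof breaks into four routine verifications: well-definedness on each side, mutual inverse-ness, and order-reversal. Throughout, the key observation is that since $i_{max}(A)$ generates $C^*_{max}(A)$ as a C*-algebra, any two $*$-homomorphisms out of $C^*_{max}(A)$ that agree on $i_{max}(A)$ must coincide, which immediately gives uniqueness of $\tilde{j}$ and injectivity of the lattice maps on representatives.

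For well-definedness, first I would observe that given any $C^*$-cover $(\mathcal{B},j)$, applying Lemma \ref{lem:uni property} to a faithful representation $\mathcal{B} \hookrightarrow \mathcal{B}(H)$ produces a $*$-homomorphism $C^*_{max}(A) \to \mathcal{B}(H)$ extending $j$; its image is a C*-subalgebra containing $j(A)$ and hence equals $\mathcal{B}$. This defines $\tilde{j}: C^*_{max}(A) \twoheadrightarrow \mathcal{B}$, and $\ker(\tilde{j})$ is a boundary ideal precisely because $\tilde{j} \circ i_{max} = j$ is completely isometric. Independence of the choice of representative is immediate: if $\Phi: \mathcal{B}_1 \to \mathcal{B}_2$ is a $*$-isomorphism with $\Phi j_1 = j_2$, then $\Phi \circ \tilde{j_1}$ satisfies the universal property defining $\tilde{j_2}$, so they are equal and $\ker(\tilde{j_1}) = \ker(\tilde{j_2})$. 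Conversely, for a boundary ideal $I$, the quotient $q_I \circ i_{max}$ is completely isometric by definition of boundary, and its image generates $C^*_{max}(A)/I$, so $(C^*_{max}(A)/I, q_I)$ is genuinely a $C^*$-cover.

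Next I would verify the two maps are mutually inverse. Starting from $[\mathcal{B},j]$, the first isomorphism theorem produces a $*$-isomorphism $C^*_{max}(A)/\ker(\tilde{j}) \xrightarrow{\sim} \mathcal{B}$ that intertwines the quotient map $q_{\ker(\tilde{j})}$ with $j$, giving equivalence of $C^*$-covers. Starting from a boundary ideal $I$, by uniqueness in the universal property applied to the $C^*$-cover $(C^*_{max}(A)/I, q_I)$, the induced $*$-homomorphism is $q_I$ itself, so its kernel is $I$. For order-reversal, if $[\mathcal{B}_1, j_1] \preceq [\mathcal{B}_2, j_2]$ via $\Phi: \mathcal{B}_2 \to \mathcal{B}_1$, then $\Phi \circ \tilde{j_2} = \tilde{j_1}$ by uniqueness, which immediately forces $\ker(\tilde{j_2}) \subseteq \ker(\tilde{j_1})$; the reverse implication is symmetric, using that inclusion $I_1 \subseteq I_2$ induces a canonical $*$-homomorphism $C^*_{max}(A)/I_1 \to C^*_{max}(A)/I_2$ intertwining the quotient maps.

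I do not anticipate a genuine obstacle here: the argument is a clean bookkeeping exercise once the universal property is extracted from Lemma \ref{lem:uni property}. The only mildly delicate point is ensuring that $\tilde{j}$ really lands in $\mathcal{B}$ rather than some larger ambient C*-algebra, which is handled by noting that $j(A) = i_{max}(A)$ generates $\mathcal{B}$, so the image of $\tilde{j}$ is forced to equal $\mathcal{B}$. The rest is a matter of chasing diagrams through the universal property and applying the first isomorphism theorem.
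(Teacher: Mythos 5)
Your argument is correct. Note that the paper does not supply its own proof of Proposition \ref{Prop:Equi Lattice Ideals}; it defers to \cite[Theorem 3.3]{Thompson}. Your route --- extracting $\tilde j$ from the universal property of Lemma \ref{lem:uni property}, observing that its image must equal $\mathcal{B}=C^*(j(A))$ because $i_{\max}(A)$ generates $C^*_{\max}(A)$, and then running the first isomorphism theorem together with uniqueness of $*$-homomorphisms agreeing on generators --- is exactly the standard argument one finds in that reference, and all four verifications (well-definedness in both directions, mutual inverseness, order reversal) are handled correctly, including the direction conventions for $\preceq$.
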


We also have the following characterization of the $C^*$-envelope.

\begin{lemma}\label{lem:empty Shilov}
    Let $A$ be an operator algebra and $(\mathcal{A},i)$ a $C^*$-cover of $A$. Then the following are equivalent:
      \begin{enumerate}
          \item $(\mathcal{A},i)$ is the $C^*$-envelope of $A$.
          \item $\A$ contains no nonzero boundary ideals for $i(A)$.
          \item The Shilov ideal with respect to $(\mathcal{A},i)$ is $\{0\}$.
      \end{enumerate}
\end{lemma}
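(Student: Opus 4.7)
The plan is to prove the equivalences in the cycle (ii)~$\Leftrightarrow$~(iii) and (i)~$\Leftrightarrow$~(iii), leaning on Proposition \ref{Prop:Equi Lattice Ideals} to translate lattice-theoretic statements into ideal-theoretic ones.

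First, the implication (ii)~$\Leftrightarrow$~(iii) is essentially bookkeeping: by definition, the Shilov ideal is a boundary ideal that contains every other boundary ideal. Thus the Shilov ideal equals $\{0\}$ if and only if every boundary ideal in $\mathcal{A}$ is $\{0\}$. I would write this as a single sentence.

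Next, for (i)~$\Rightarrow$~(ii), suppose $(\mathcal{A},i)=(C^*_e(A),i_e)$ and let $I\subseteq\mathcal{A}$ be a boundary ideal. Then the quotient map $q_I:\mathcal{A}\to\mathcal{A}/I$ satisfies $q_I\circ i$ is completely isometric on $A$, so $(\mathcal{A}/I, q_I\circ i)$ is a $C^*$-cover of $A$. By minimality of the envelope, there is a morphism $(C^*_e(A),i_e)\preceq (\mathcal{A}/I,q_I\circ i)$, i.e.\ a $*$-homomorphism $\phi:\mathcal{A}/I\to\mathcal{A}$ with $\phi(q_I(i(a)))=i(a)$ for all $a\in A$. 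Then $\phi\circ q_I:\mathcal{A}\to\mathcal{A}$ is a $*$-homomorphism that fixes the generating set $i(A)$, so $\phi\circ q_I=\id_{\mathcal{A}}$. This forces $I\subseteq\ker q_I=\{0\}$.

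For (iii)~$\Rightarrow$~(i), I would use Proposition \ref{Prop:Equi Lattice Ideals}. Let $\tilde i:C^*_{max}(A)\to\mathcal{A}$ be the canonical morphism and set $J=\ker\tilde i$, so $(\mathcal{A},i)\cong(C^*_{max}(A)/J,q_J)$ and $J$ is a boundary ideal in $C^*_{max}(A)$. Let $S$ denote the Shilov ideal in $C^*_{max}(A)$, which corresponds under the bijection to the envelope $(C^*_e(A),i_e)$; since the envelope is the minimum of $\cstarlattice(A)$ and the bijection is order-reversing, $S$ is the maximum boundary ideal, so $J\subseteq S$. By the ideal-correspondence theorem for quotients, the boundary ideals of $(\mathcal{A},i)$ are exactly the images $K/J$ for boundary ideals $K$ of $C^*_{max}(A)$ with $J\subseteq K\subseteq S$, and in particular the Shilov ideal of $(\mathcal{A},i)$ is $S/J$. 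The hypothesis (iii) gives $S/J=\{0\}$, hence $S=J$, and therefore $(\mathcal{A},i)\sim(C^*_e(A),i_e)$.

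The only delicate point is checking that $K/J$ being a boundary ideal in $\mathcal{A}=C^*_{max}(A)/J$ for $i(A)$ is the same thing as $K$ being a boundary ideal in $C^*_{max}(A)$ for $i_{max}(A)$; this is a routine diagram chase using that $i=\tilde i\circ i_{max}$ and that the quotient map $C^*_{max}(A)\to\mathcal{A}/(K/J)$ factors as the composition of the two quotient maps. I expect this verification, together with making sure the order-reversing character of the bijection in Proposition \ref{Prop:Equi Lattice Ideals} is invoked at the right place, to be the main (minor) technical point of the proof.
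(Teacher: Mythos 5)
The paper states this lemma as background and gives no proof of it, so there is no in-paper argument to compare against; your proposal stands on its own and is correct. The equivalence (ii)~$\Leftrightarrow$~(iii) is indeed immediate from the definition of the Shilov ideal; (i)~$\Rightarrow$~(ii) is the standard rigidity argument (a $*$-endomorphism of $\mathcal{A}$ fixing the generating set $i(A)$ must be the identity, forcing $I=\ker q_I=\{0\}$); and (iii)~$\Rightarrow$~(i) correctly transports the question to $C^*_{max}(A)$ via Proposition~\ref{Prop:Equi Lattice Ideals}, using that the order-reversing bijection sends the minimum of the lattice to the largest boundary ideal $S$ and that boundary ideals of $\mathcal{A}\cong C^*_{max}(A)/J$ correspond to boundary ideals of $C^*_{max}(A)$ containing $J$. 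The only cosmetic slip is the phrase ``$I\subseteq\ker q_I$'' where $I=\ker q_I$ holds by definition (the real content is that $\phi\circ q_I=\id$ makes $q_I$ injective); the conclusion is unaffected.
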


\subsection{Maximal u.c.c. maps}

Let $H$ and $K$ be Hilbert spaces with $H\subset K$, and let $\pi:A\to\mathcal{B}(H)$ and $\phi:A\to\mathcal{B}(K)$ be u.c.c. maps. We say that $\phi$ \textit{dilates} $\pi$, or that $\phi$ is a \textit{dilation} of $\pi$, if 
  \[
    \pi=P_H\phi|_H.
  \]
We say that $\pi$ is \textit{maximal} if $H$ is a reducing subspace for every dilation of $\pi$. For a proof see \cite[Theorem 1.2]{DriMcc}.

\begin{proposition}\label{prop:existence max dilation}
    Let $A$ be an operator algebra and $\pi$ a u.c.c. map on $A$. Then there exists a dilation $\phi$ of $\pi$ such that $\phi$ is maximal.
\end{proposition}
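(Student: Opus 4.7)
My plan is a Zorn's lemma argument on a set of ``non-trivial'' dilations of $\pi$. Fix an infinite cardinal $\kappa \geq \dim H$, and let $\mathcal{D}$ be the set of u.c.c. dilations $\phi: A \to \mathcal{B}(K)$ of $\pi$ with $H \subseteq K$ and $\dim K \leq \kappa$. Define a strict partial order $\prec$ on $\mathcal{D}$ by declaring $\phi_1 \prec \phi_2$ iff $\phi_2$ dilates $\phi_1$ and $K_1$ is non-reducing for $\phi_2$.

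Transitivity of $\prec$ is a short direct check: if $\phi_1 \prec \phi_2$ is witnessed by $w \in K_1$, $a \in A$ with $\phi_2(a)w \notin K_1$, and $\phi_3$ merely dilates $\phi_2$, then $\phi_3(a)w = \phi_2(a)w + \xi$ with $\xi$ orthogonal to $K_2$ (and hence to $K_1$), so $\phi_3(a)w \notin K_1$, giving $\phi_1 \prec \phi_3$. For Zorn's chain condition, observe that along any $\prec$-chain $(\phi_\alpha)_\alpha$ the dimensions $\dim K_\alpha$ strictly increase (since a dilation on the same Hilbert space is the map itself), so the chain has cardinality $\leq \kappa$ and the colimit $K := \overline{\bigcup_\alpha K_\alpha}$ satisfies $\dim K \leq \kappa$. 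Define $\phi: A \to \mathcal{B}(K)$ via the sesquilinear-form colimit
\[
\langle \phi(a) h, k \rangle := \langle \phi_\beta(a) h, k \rangle, \qquad h, k \in K_\beta,
\]
independent of the choice of $\beta$ by the compatibility $P_{K_\beta}\phi_{\beta'}|_{K_\beta} = \phi_\beta$; matricial amplification yields complete contractivity. The transitivity step then ensures $\phi_\alpha \prec \phi$, so $\phi \in \mathcal{D}$ is a strict upper bound.

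Zorn's lemma delivers a $\prec$-maximal $\phi^* \in \mathcal{D}$, and I claim it is maximal in the proposition's sense. Suppose $\psi: A \to \mathcal{B}(L)$ dilates $\phi^*$ with $K^*$ non-reducing, so there is a witness vector $\xi \in L \setminus K^*$ of the form $\psi(a)w$ or $\psi(a)^*w$ for some $w \in K^*, a \in A$. Compress $\psi$ to $K' := K^* + \mathbb{C}\xi \subset L$: this compression $P_{K'}\psi|_{K'}$ lies in $\mathcal{D}$ (dimension $\leq \dim K^* + 1 \leq \kappa$), dilates $\phi^*$, and has $K^*$ still non-reducing (the witness survives), thus strictly dominating $\phi^*$ in $\prec$. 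This contradicts maximality; hence every dilation of $\phi^*$ has $K^*$ reducing.

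The main obstacle is the set-theoretic bookkeeping: ensuring $\mathcal{D}$ is closed under both chain colimits and the one-dimensional compression step. Both are handled by taking $\kappa$ infinite and observing that $\prec$-chains have strictly increasing dimensions. The remaining pieces --- the transitivity argument, the sesquilinear-form colimit at limit stages, and the final compression at the Zorn-maximal element --- are each short verifications.
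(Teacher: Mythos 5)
The paper does not prove this proposition itself; it cites Dritschel--McCullough \cite{DriMcc}, whose argument reduces maximality to ``maximality at a pair $(a,h)$'' and achieves it one pair at a time by maximizing $\sup_\sigma\|\sigma(a)h\|$ over dilations, followed by a controlled transfinite iteration. Your Zorn's lemma architecture is a genuinely different (folklore) route, and most of its local steps check out: $\prec$ is irreflexive and transitive, the witness for non-reducibility survives further dilation, the sesquilinear-form colimit is a well-defined u.c.c.\ dilation of each $\phi_\alpha$, and the final one-vector compression $P_{K'}\psi|_{K'}$ does land back in $\mathcal{D}$ and strictly dominate $\phi^*$.

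The gap is the chain condition, and it is exactly the point the published proofs are engineered to avoid. First, your parenthetical only shows that $K_\alpha\subsetneq K_\beta$ when $\phi_\alpha\prec\phi_\beta$; for infinite-dimensional spaces a proper closed subspace can have the same Hilbert-space dimension, so ``the dimensions strictly increase'' is false. Second, even granting strictly nested subspaces, the conclusion that a $\prec$-chain has cardinality at most $\kappa$ does not follow, and the colimit can escape $\mathcal{D}$: a well-ordered chain of length $\kappa^+$ in which each successor adds one dimension has every $K_i$ of dimension $|i|+\dim H\le\kappa$, yet $\overline{\bigcup_i K_i}$ has dimension $\kappa^+>\kappa$. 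Nothing in the definition of $\mathcal{D}$ rules out such chains, so Zorn's lemma does not apply as stated; ruling them out (or showing they must eventually become reducing) is essentially the content of the theorem. To repair the argument you need a genuine additional idea --- e.g., the Dritschel--McCullough reduction to pairwise maximality, or a restriction to dilations ``generated'' by $H$ with a uniform dimension bound that is stable under unions of chains --- rather than the set-theoretic bookkeeping you describe as routine.
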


Together with part $(i)$ of the next lemma, this proposition gives rise to a proof of the existence of the $C^*$-envelope.

\begin{lemma}\label{lem: maximal u.c.c. and envelope}
    Let $A$ be an operator algebra and let $\pi$ be a maximal u.c.i. homomorphism on $A$. Then 
    \begin{enumerate}
        \item $(C^*(\pi(A)),\pi)$ is a $C^*$-envelope of $A$.
        \item If $(C^*_e(A),i_e)$ is the $C^*$-envelope of $A$, then there exists a unique *-homomorphism $\tilde\pi$ on $C^*_e(A)$ such that
          \[
            \tilde\pi\circ i_e=\pi.
          \]
    \end{enumerate}
\end{lemma}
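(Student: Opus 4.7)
The plan is to prove (i) by directly verifying the defining lower-bound property of the C*-envelope in $\cstarlattice(A)$, and then to deduce (ii) from (i) using uniqueness of the C*-envelope. For (i), given any C*-cover $(\mathcal{B},j)$ of $A$, I would construct a *-homomorphism $\Psi:\mathcal{B}\to C^*(\pi(A))$ with $\Psi\circ j=\pi$; this makes $(C^*(\pi(A)),\pi)$ a lower bound for the whole lattice $\cstarlattice(A)$, hence an instance of the $C^*$-envelope.

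To produce $\Psi$, I would start from the completely isometric unital map $\pi\circ j^{-1}:j(A)\to\mathcal{B}(H)$ and apply Arveson's extension theorem to obtain a u.c.p. extension $\Phi:\mathcal{B}\to\mathcal{B}(H)$. Taking a minimal Stinespring dilation $\Phi(b)=V^*\tilde\Phi(b)V$ with *-representation $\tilde\Phi:\mathcal{B}\to\mathcal{B}(K)$ and isometry $V:H\to K$, the composition $\tilde\Phi\circ j:A\to\mathcal{B}(K)$ satisfies $V^*\tilde\Phi(j(a))V=\Phi(j(a))=\pi(a)$ for each $a\in A$, so it dilates $\pi$ in the sense of the preceding subsection. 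Maximality of $\pi$ then forces $VH$ to be reducing for $\tilde\Phi(j(A))$, and since $j(A)$ generates $\mathcal{B}$ as a C*-algebra, $VH$ reduces the entire *-representation $\tilde\Phi$. Consequently the compression $\Psi(b):=V^*\tilde\Phi(b)V$ is multiplicative, defining a *-homomorphism $\mathcal{B}\to\mathcal{B}(H)$ whose image equals $C^*(\Psi(j(A)))=C^*(\pi(A))$ and which satisfies $\Psi\circ j=\pi$, as needed.

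Statement (ii) then follows with little extra work. Assertion (i) identifies $(C^*(\pi(A)),\pi)$ as a $C^*$-envelope of $A$, so the uniqueness of the $C^*$-envelope up to equivalence (as recalled in Section \ref{Section:Background}) supplies a *-isomorphism $\tilde\pi:C^*_e(A)\to C^*(\pi(A))$ with $\tilde\pi\circ i_e=\pi$. Uniqueness of $\tilde\pi$ is standard: any two candidates must agree on $i_e(A)$, which generates $C^*_e(A)$ as a C*-algebra.

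The point I would expect to require the most care is the translation step after Stinespring: recognizing $\tilde\Phi\circ j$ as a genuine dilation of $\pi$ so that the maximality hypothesis applies to it, and then upgrading the resulting statement ``$VH$ reduces $\tilde\Phi(j(A))$'' to ``$VH$ reduces $\tilde\Phi(\mathcal{B})$'' using that $j(A)$ generates $\mathcal{B}$. Everything else is a routine combination of Arveson's extension theorem and Stinespring's theorem with the universal property of the minimum element of $\cstarlattice(A)$.
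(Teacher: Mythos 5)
Your argument is correct and is precisely the standard Dritschel--McCullough argument that the paper itself relies on: the paper states this lemma as background without proof (the surrounding subsection cites \cite[Theorem 1.2]{DriMcc} and notes that Proposition \ref{prop:existence max dilation} together with part (i) yields the existence of the C*-envelope). The chain Arveson extension $\to$ minimal Stinespring dilation $\to$ maximality forces $VH$ to reduce $\tilde\Phi(j(A))$ and hence all of $\tilde\Phi(\mathcal{B})$ $\to$ the compression is a $\ast$-homomorphism onto $C^*(\pi(A))$ is exactly the intended proof, and your deduction of (ii) from (i) via uniqueness of the envelope is fine.
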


So if we take an u.c.i. homomorphism $\rho$ and dilate it to a maximal u.c.c. homomorphism $\pi$, then it is straightforward to check that $\pi$ is also u.c.i., and hence by (i) in the previous lemma, $(C^*(\pi(A)),\pi)$ is a $C^*$-envelope of $A$. We will use this construction later, choosing $\rho$ as the embedding into the maximal $C^*$-cover.

\begin{lemma}\label{lem:intersection shilov ideal}
    Let $A$ be an operator algebra, $(C^*_{max}(A),i_{max})$ the maximal $C^*$-cover and $I$ the Shilov ideal. For a u.c.c. homomorphism $\pi$ on $A$ denote by $\tilde{\pi}$ the unique unital $*$-homomorphism on $C^*_{max}(A)$ with 
      \[
        \tilde{\pi}i_{max}=\pi,
      \]
      then 
      \[
        I=\bigcap\{\ker(\tilde{\pi})\ |\ \pi:A\to\mathcal{B}(H)\ \textup{u.c.c. and maximal}\}.
      \]
\end{lemma}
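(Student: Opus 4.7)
The plan is to establish $I = J$, where $J$ denotes the right-hand intersection, by two inclusions. The main engine is a maximal u.c.i.\ homomorphism on $A$ whose associated $*$-homomorphism on $C^*_{max}(A)$ has kernel exactly $I$, obtained by combining Proposition \ref{prop:existence max dilation} with Lemma \ref{lem: maximal u.c.c. and envelope}(i).

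First I would establish $J \subseteq I$. Fix a faithful representation of $C^*_e(A)$, view $i_e : A \to \mathcal{B}(\cdot)$ as a u.c.i.\ homomorphism, and use Proposition \ref{prop:existence max dilation} to dilate it to a maximal u.c.c.\ homomorphism $\rho: A \to \mathcal{B}(K)$. Any dilation of a u.c.i.\ map is again u.c.i.\ (being squeezed between the u.c.c.\ upper bound and the u.c.i.\ compression), so Lemma \ref{lem: maximal u.c.c. and envelope}(i) applies, making $(C^*(\rho(A)), \rho)$ a $C^*$-envelope; uniqueness of the envelope then identifies $\tilde\rho$ with the envelope quotient $q: C^*_{max}(A) \to C^*_e(A)$, whose kernel is $I$ by Proposition \ref{Prop:Equi Lattice Ideals}. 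Since this $\rho$ is one of the maps indexing the intersection, $J \subseteq \ker(\tilde\rho) = I$.

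The reverse inclusion $I \subseteq J$ is the heart of the proof. Given any maximal u.c.c.\ homomorphism $\pi: A \to \mathcal{B}(H)$, I would form $\pi \oplus \rho: A \to \mathcal{B}(H \oplus K)$. This direct sum is still u.c.i.\ (because $\rho$ is) and it is maximal: any dilation $\sigma$ of $\pi \oplus \rho$ restricts to dilations of each summand, so by maximality of $\pi$ and $\rho$ both $H$ and $K$, hence $H \oplus K$, are reducing for $\sigma$. Lemma \ref{lem: maximal u.c.c. and envelope}(i) then makes $(C^*((\pi \oplus \rho)(A)), \pi \oplus \rho)$ a $C^*$-envelope. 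Next I invoke the second coordinate projection $p_2: \mathcal{B}(H) \oplus \mathcal{B}(K) \to \mathcal{B}(K)$, which restricts to a $*$-homomorphism $C^*((\pi \oplus \rho)(A)) \to C^*(\rho(A))$ that is completely isometric on $(\pi \oplus \rho)(A)$, so $\ker(p_2)$ is a boundary ideal; Lemma \ref{lem:empty Shilov} forces $\ker(p_2) = \{0\}$, so $p_2$ is a $*$-isomorphism. Then $\phi := p_1 \circ p_2^{-1}$ (composed with the canonical identification $C^*_e(A) \cong C^*(\rho(A))$) is a $*$-homomorphism satisfying $\phi \circ i_e = \pi$, and by uniqueness of liftings $\tilde\pi = \phi \circ q$, giving $\ker(\tilde\pi) \supseteq \ker(q) = I$. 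Since $\pi$ was arbitrary, $I \subseteq J$.

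The main obstacle should be the direct-sum construction: verifying that $\pi \oplus \rho$ is maximal via the reducing-subspace characterization, and identifying $p_2$ as a $*$-isomorphism via the no-nonzero-boundary-ideals criterion of Lemma \ref{lem:empty Shilov}. Once these two points are in hand, the rest is routine bookkeeping with the universal properties of $C^*_{max}(A)$ and $C^*_e(A)$.
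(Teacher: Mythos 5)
The paper states this lemma without proof, so there is no argument of record to compare against; judged on its own, your proof is correct and complete. Both inclusions check out: the maximal dilation $\rho$ of $i_e$ is u.c.i.\ and maximal, so $\ker\tilde\rho=I$ by Lemma \ref{lem: maximal u.c.c. and envelope}(i) and Proposition \ref{Prop:Equi Lattice Ideals}, giving $J\subseteq I$; and the direct-sum trick correctly handles the fact that the intersection runs over maximal u.c.c.\ (not necessarily u.c.i.) homomorphisms, to which Lemma \ref{lem: maximal u.c.c. and envelope} does not directly apply. Your verification that $\pi\oplus\rho$ is maximal (a dilation of $\pi\oplus\rho$ compresses to dilations of each summand, so $H$ and $K$ each reduce it) and that $p_2$ is injective on $C^*((\pi\oplus\rho)(A))$ (its kernel is a boundary ideal in a C*-envelope, hence zero by Lemma \ref{lem:empty Shilov}) are exactly the two nontrivial points, and both are handled correctly; the factorization $\tilde\pi=\phi\circ q$ then gives $I=\ker q\subseteq\ker\tilde\pi$ by uniqueness of the lift through $C^*_{max}(A)$.
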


Combining this with Lemma \ref{lem:empty Shilov}, we obtain that if $(C^*_{max}(A),i_{max})$ is not the $C^*$-envelope, then $I \neq \{0\}$. This observation will play a crucial role in the proof of the Dichotomy Theorem.

\begin{lemma}\label{lem:maximal_SOT_limit}
Let $A$ be an operator algebra in a $C^*$-algebra $\mathcal{A}=C^*(A)$ and let $\pi, \pi_n:\mathcal{A}\to\mathcal{B}(H)$ be unital *-homomorphisms, such that $\pi_n\to\pi$ pointwise *-SOT. If each $\pi_n|_A$ is maximal, then there exists a $*$-homomorphism $\psi$ defined on the C*-envelope $(\Ce(A),i_e)$ such that
  \[
    \psi\circ i_e=\pi.
  \]
\end{lemma}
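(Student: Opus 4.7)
The plan is to use the universal property of $\Cmax(A)$ to extend $\pi|_A$ to a $*$-homomorphism on $\Cmax(A)$, verify that this extension annihilates the Shilov ideal $I$, and then invoke the identification $\Cmax(A)/I\cong\Ce(A)$ to obtain $\psi$.

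Setup: since $(\mathcal{A},\iota)$ (with $\iota:A\hookrightarrow\mathcal{A}$ the inclusion) is itself a C*-cover of $A$, the universal property (Lemma \ref{lem:uni property}) furnishes a unital $*$-homomorphism $q:\Cmax(A)\to\mathcal{A}$ with $q\circ i_{\max}=\iota$. The restrictions $\pi_n|_A$ and $\pi|_A$ are u.c.c. homomorphisms on $A$, so they extend uniquely to unital $*$-homomorphisms $\tilde{\pi}_n,\tilde{\pi}:\Cmax(A)\to\mathcal{B}(H)$. By uniqueness of extensions, these are simply
\[
\tilde{\pi}_n=\pi_n\circ q,\qquad \tilde{\pi}=\pi\circ q,
\]
since both sides of each equality agree on the dense $*$-subalgebra generated by $i_{\max}(A)$.

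The crux of the proof is the following: because each $\pi_n|_A$ is maximal, Lemma \ref{lem:intersection shilov ideal} gives $I\subseteq\ker(\tilde{\pi}_n)$ for every $n$, where $I$ denotes the Shilov ideal in $\Cmax(A)$. Fix $y\in I$ and set $x=q(y)\in\mathcal{A}$. Then $\pi_n(x)=\tilde{\pi}_n(y)=0$ for all $n$, and the pointwise $*$-SOT convergence $\pi_n\to\pi$ on $\mathcal{A}$ immediately forces $\pi(x)=0$, i.e.\ $\tilde{\pi}(y)=0$. Hence $I\subseteq\ker(\tilde{\pi})$, so $\tilde{\pi}$ descends to a $*$-homomorphism on the quotient $\Cmax(A)/I$.

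To conclude, Proposition \ref{Prop:Equi Lattice Ideals} combined with Lemma \ref{lem:empty Shilov} shows that the pair $(\Cmax(A)/I,\ q_I\circ i_{\max})$ is $*$-isomorphic to the C*-envelope $(\Ce(A),i_e)$ via a unique isomorphism $\alpha$ with $\alpha\circ q_I\circ i_{\max}=i_e$; transporting the descended map through $\alpha^{-1}$ produces the desired $\psi:\Ce(A)\to\mathcal{B}(H)$ satisfying $\psi\circ i_e=\tilde{\pi}\circ i_{\max}=\pi|_A$. The only real subtlety is the identification $\tilde{\pi}_n=\pi_n\circ q$: this is precisely what lets SOT convergence on $\mathcal{A}$ transfer for free to SOT convergence of the extensions on the possibly much larger algebra $\Cmax(A)$, sidestepping any need to argue separately by density of $*$-polynomials and joint SOT continuity of multiplication on bounded sets.
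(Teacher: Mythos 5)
Your proof is correct, but it takes a genuinely different route from the paper's. The paper works downstairs in $\Ce(A)$ from the start: maximality of each $\pi_n|_A$ gives, via Lemma \ref{lem: maximal u.c.c. and envelope}, $\ast$-homomorphisms $\psi_n$ on $\Ce(A)$ with $\psi_n\circ i_e=\pi_n|_A$ and $\psi_n\circ q=\pi_n$ for the canonical quotient $q:\mathcal{A}\to\Ce(A)$; it then shows $(\psi_n)$ is pointwise $\ast$-SOT Cauchy, takes the limit $\psi$, and appeals to ``standard arguments'' that a pointwise $\ast$-SOT limit of $\ast$-homomorphisms is again a $\ast$-homomorphism. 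You instead go upstairs to $\Cmax(A)$, identify the extensions as $\tilde\pi_n=\pi_n\circ q$ and $\tilde\pi=\pi\circ q$, invoke Lemma \ref{lem:intersection shilov ideal} to get $I\subseteq\ker\tilde\pi_n$, and then use the convergence hypothesis only at the points $q(y)$ for $y\in I$---where it merely says that a limit of zeros is zero---to conclude $I\subseteq\ker\tilde\pi$ and descend through $\Cmax(A)/I\cong\Ce(A)$. What your route buys: you never construct a limiting map, so you avoid the one step the paper leaves to the reader (that the $\ast$-SOT limit is multiplicative and $\ast$-preserving), and you in fact only need plain SOT convergence rather than $\ast$-SOT. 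What it costs: you route the argument through the Shilov-ideal machinery (Lemma \ref{lem:intersection shilov ideal}, Proposition \ref{Prop:Equi Lattice Ideals}, and the identification of $\Cmax(A)/I$ with the envelope), which the paper's proof does not need. Both arguments land on the same, correct, reading of the conclusion, namely $\psi\circ i_e=\pi|_A$.
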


\begin{proof}
 Since every homomorphism $\pi_n\vert_A$ is maximal, by Lemma \ref{lem: maximal u.c.c. and envelope}, there are *-homomorphisms $\psi_n$ defined on $\Ce(A)$ with 
   \[
     \psi_n\circ i_e=\pi_n\vert_A. 
   \]
Let $q:\A\to C^*_e(A)$ be the quotient map, which satisfies $q\vert_A = i_e$. Then $\psi_n \circ q = \pi_n$ because these are $\ast$-homomorphisms equal on $A$ and $\A=C^*(A)$.

It suffices to show that $(\psi_n)_n$ is a pointwise *-SOT Cauchy-sequence. Let $a\in \Ce(A)$. Since $q$ is surjective, there is $\tilde a\in\mathcal{A}$ such that $q(\tilde a)=a$. Then,
  \begin{equation*}
  \|(\psi_n-\psi_m)(a)x\|=\|(\pi_n-\pi_m)(\tilde a)x\|
  \end{equation*}
for all $x\in H$. Hence the sequence $(\psi_n)$ has a *-SOT limit $\psi$, and standard arguments show that $\psi$ is again a *-homomorphism. For $a\in A$ we have
  \[
    \psi(i_e(a))=\lim_n\psi_n(i_e(a))=\lim_n\pi_n(a)=\pi(a),
  \]
so $\psi\circ i_e=\pi|_A$, as required.
\end{proof}

\subsection{Direct limits}
A sequence of operator algebras $(A_n)_{n\in\mathbb{N}}$ together with u.c.i. homomorphisms
  \[
    i_n:A_n\to A_{n+1}
  \]
will be called a \textit{direct system} of operator algebras. There are several ways to construct their \textit{direct limit}. One way, described in \cite[Example 2.3.8]{Blecher}, is by considering the $A_n$ as Banach algebras, building their direct limit $A$ in the Banach algebra sense, turning $M_m(A)$ into Banach algebras using the operator algebra structure of the $A_n$, and then using the Blecher-Ruan-Sinclair Theorem to verify that this is again an (abstract) operator algebra. This approach does not give enough information for our purposes since we are interested in an explicit connection between the $C^*$-covers of the $A_n$ and the $C^*$-covers of the direct limit, which in the above construction do not come into effect. So we will present a more concrete way to obtain the direct limit with the help of $C^*$-covers, similar to \cite[Proposition 16] {KirWas}, and build a connection between the $C^*$-covers of the sequence and the $C^*$-covers of the direct limit. Because we need no further generality, we present this construction only for direct systems indexed by the natural numbers, but this can all be extended without obstruction to direct systems indexed by any directed set.

For each $n$, let $C^*(A_n)$ be an arbitrary $C^*$-cover of $A_n$, where to simplify notation greatly we omit the embedding and view $A_n$ as a subset of $C^*(A_n)$. Let $\mathcal{A}$ be the $\ell^\infty$-direct product of the $C^*(A_n)$, and consider the ideal 
  \[
    I=\{(x_n)\in \mathcal{A}\ |\ \lim_{n\to\infty}\|x_n\|=0\}.
  \]
Let $B\subset\mathcal{A}$ be the algebra of sequences $(x_n)$ such that 
  \begin{enumerate}
    \item $x_n\in A_n\subseteq C^*(A_n)$ for all $n$,
    \item $x_{n+1}=x_n$ eventually.
  \end{enumerate}
Set
  \[
    A=\overline{(B+I)/I}\subset \mathcal{A}/I.
  \]
For $n\ge0$ define $\lambda_n:A_n\to A$ by $x\mapsto (x_j)+I$, where
  \begin{equation*}
      x_j=\begin{cases} 1 & (j<n),\\ x & (j=n), \\ (i_{j-1}i_{j-2}\dots i_n)(x) & (j>n). \end{cases}
  \end{equation*}
It is straightforward to verify that each $\lambda_n$ is a u.c.i. homomorphism,  $\lambda_n=\lambda_{n+1}i_n$ and $A=\overline{\bigcup_{n=1}^\infty\lambda_n(A_n)}$. We call the operator algebra $A$ the \textit{direct limit} of $(A_n,i_n)_n$ and the $C^*$-algebra $C^*(A)\subset\mathcal{A}/I$ together with the identity embedding the \textit{induced} $C^*$-cover of $A$.

Now every sequence of $C^*$-covers $(\mathcal{A}_n,\phi_n)_n$ of $(A_n)_n$ induces a direct system via
  \[
    \phi_0(A_0)\xrightarrow{\tilde{i}_0} \phi_1(A_1)\xrightarrow{\tilde{i}_1}
    \phi_2(A_2)\xrightarrow{}\dots
  \]
where 
  \begin{align*}
\tilde{i}_n:\phi_n(A_n)&\to\phi_{n+1}(A_{n+1}), \\ a&\mapsto(\phi_{n+1} i_n \phi_n^{-1})(a).
  \end{align*}
It should not be surprising, and will be shown in the following proposition, that the direct limit of this direct system coincides with $A$ in the sense of the existence of a surjective u.c.i. homomorphism between these two direct limits. Hence the induced $C^*$-cover of $(\phi_n(A_n),\tilde{i}_n)_n$ is also a $C^*$-cover of $A$, however, dependent on the surjective u.c.i. homomorphism. To circumvent this problem, we will require the surjective u.c.i. homomorphism to have a certain intertwining property that makes it unique, as shown in the following proposition.

\begin{proposition}\label{Prop:Direct systems}
Given a direct system $(A_n,i_n)_n$ of operator algebras and $C^*$-covers $(\mathcal{A}_n,\phi_n)$. Let $A$ be the direct limit of $(A_n,i_n)_n$ and $B$ the direct limit of $(\phi_n(A_n),\tilde{i}_n)_n$. Denote by 
  \[
    \lambda_n:A_n\to A\quad \textup{and}\quad \tilde{\lambda}_n:\phi_n(A_n)\to B
  \]
the u.c.i. homomorphisms from the construction of the direct limit. Then there is a unique surjective u.c.i. homomorphism $\pi:A\to B$ such that $\pi\lambda_n=\tilde{\lambda}_n\phi_n$.

Additionally, if an operator algebra $C$ fulfills
  \begin{enumerate}
      \item For every $n$ there exist u.c.i. homomorphisms $\rho_n:A_n\to C$ such that $\rho_n=\rho_{n+1}i_n$,
      \item $\overline{\bigcup_{n=1}^\infty\rho_n(A_n)}=C$,
  \end{enumerate}
then there exists a unique surjective u.c.i. homomorphism $\gamma:A\to C$ such that $\gamma\lambda_n=\rho_n$.
\end{proposition}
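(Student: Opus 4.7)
The plan is to define $\pi$ on the dense subalgebra $\bigcup_n \lambda_n(A_n) \subseteq A$ by the formula $\pi(\lambda_n(a)) := \tilde{\lambda}_n(\phi_n(a))$ and then extend by continuity. The key compatibility identities I would first establish are $\lambda_n = \lambda_m \circ (i_{m-1}\cdots i_n)$ and $\tilde{\lambda}_n \phi_n = \tilde{\lambda}_m \phi_m \circ (i_{m-1}\cdots i_n)$ for $n \le m$, both obtained by iterating the given relations $\lambda_n = \lambda_{n+1} i_n$, $\tilde{\lambda}_n = \tilde{\lambda}_{n+1} \tilde{i}_n$, and $\tilde{i}_n \phi_n = \phi_{n+1} i_n$.

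For well-definedness, suppose $\lambda_n(a) = \lambda_m(b)$ with $n \le m$. Representing both as classes in $\mathcal{A}/I$, the $j$-th coordinates of the two defining sequences for $j \ge m$ are $(i_{j-1}\cdots i_n)(a)$ and $(i_{j-1}\cdots i_m)(b)$, whose difference has norm $\|(i_{m-1}\cdots i_n)(a) - b\|$ for every such $j$, because each $i_k$ is completely isometric hence isometric. For the representative sequences to differ by an element of $I$, this constant must vanish, so $b = (i_{m-1}\cdots i_n)(a)$, and the second compatibility identity then yields $\tilde{\lambda}_m(\phi_m(b)) = \tilde{\lambda}_n(\phi_n(a))$. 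For the complete isometry of $\pi$, the matrix norm of $[\lambda_n(a_{ij})] \in M_k(A) \subseteq M_k(\mathcal{A}/I)$ is the quotient norm, equal to the $\limsup$ over $j$ of $\|[(i_{j-1}\cdots i_n)(a_{ij})]\|_{M_k(C^*(A_j))}$; since the $i_k$ are u.c.i., this sequence is eventually constant at $\|[a_{ij}]\|_{M_k(A_n)}$. The identical calculation on the $B$-side, using that each $\phi_k$ is u.c.i., gives $\|[\tilde{\lambda}_n(\phi_n(a_{ij}))]\|_{M_k(B)} = \|[a_{ij}]\|_{M_k(A_n)}$. Thus $\pi$ is completely isometric on the dense subalgebra, extending uniquely to a u.c.i.\ homomorphism $\pi : A \to B$; its image is closed (being the isometric image of a complete space) and contains the dense set $\bigcup_n \tilde{\lambda}_n(\phi_n(A_n))$, so $\pi$ is surjective. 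Uniqueness follows from density together with the prescribed intertwining.

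The second claim is proved by exactly the same template: iterating $\rho_n = \rho_{n+1} i_n$ gives $\rho_n = \rho_m \circ (i_{m-1}\cdots i_n)$, which underwrites well-definedness of $\gamma(\lambda_n(a)) := \rho_n(a)$; the same eventual-constancy argument combined with $\rho_n$ being u.c.i.\ shows $\gamma$ is completely isometric; density of $\bigcup_n \rho_n(A_n)$ in $C$ yields surjectivity; and uniqueness is immediate. The only genuinely subtle step in either part is the well-definedness argument, which crucially exploits that the structure maps $i_n$ are completely isometric rather than merely contractive, forcing representative sequences that agree modulo the null ideal $I$ to agree exactly on each coordinate from some index onward.
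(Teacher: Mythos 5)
Your proposal is correct and follows essentially the same route as the paper: define $\pi$ on the dense union $\bigcup_n\lambda_n(A_n)$ by $\lambda_n(a)\mapsto\tilde{\lambda}_n(\phi_n(a))$, verify compatibility across indices, extend by continuity, and get surjectivity and uniqueness from density. You simply supply more detail than the paper does on well-definedness and the complete-isometry computation (via the $\limsup$ quotient norm in the $\ell^\infty/c_0$ construction), which the paper's proof asserts without elaboration.
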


In particular, this shows that the resulting direct limit operator algebra $A$ does not depend on the particular C*-covers used for each $A_n$, up to completely isometric isomorphism.

\begin{proof}
With the above notations, we have for every $n$ that the map
  \[
    j_n:\lambda_n(A_n)\to\tilde{\lambda}_n(\phi_n(A_n)), \ \lambda_n(a)\mapsto\tilde{\lambda}_n(\phi_n(a))
  \]
is a surjective u.c.i. homomorphism and 
  \[
    j_{n+1}\lambda_n=j_{n+1}\lambda_{n+1}i_n=\tilde{\lambda}_{n+1}\phi_{n+1}i_n=\tilde{\lambda}_{n+1}\tilde{i}_n\phi_n=\tilde{\lambda}_n\phi_n=j_n\lambda_n
  \]
Thus the maps $j_n$ induce a well-defined surjective u.c.i. homomorphism
  \[
  \bigcup_{n=1}^\infty\lambda_n(A_n)\to\bigcup_{n=1}^\infty\tilde{\lambda}_n(\phi_n(A_n))
  \]
that extends to a surjective u.c.i. homomorphism $\pi:A\to B$ with $\pi\lambda_n=\tilde{\lambda}_n\phi_n$. The uniqueness of $\pi$ follows from the fact that $\bigcup_{n=1}^\infty\lambda_n(A_n)$ is dense in $A$, proving the first part of the proposition.

The additional claim follows analogously to the above proof by replacing $\tilde{\lambda}_n\phi_n$ with the maps from (i).
\end{proof}

In the setting of the above proposition, we will call the $C^*$-cover $(C^*(B),\pi)$ the \textit{induced $C^*$-cover of $A$} with respect to $C^*$-covers $(C^*(\phi_n(A_n)),\phi_n)_n$. Note that by the previous proposition the u.c.i. homomorphism $\pi$ fulfills
  \[
    \pi\lambda_n=\tilde{\lambda}_n\phi_n.
  \]

\begin{theorem}\label{theorem:commuting diagram}
Let $(A_n,i_n)_n$ be a direct system with direct limit $A$, let $(\mathcal{A}_n,\varphi_n)_n$ be $C^*$-covers of $(A_n)_n$, and assume that there are $*$-isomorphisms $\pi_n:C^*(A_n)\to \mathcal{A}_{n+1}$ such that the diagram
\begin{equation}\label{diagram:cover_commuting}
\begin{tikzcd}
    & \varphi_{n+1}(A_{n+1}) \\
    A_n \arrow[r, "i_n" ] \arrow[ur, "\pi_n" ] & A_{n+1} \arrow[u,"\varphi_{n+1}",right]
\end{tikzcd}
\end{equation}
commutes. Then the two induced $C^*$-covers of $(A_n,i_n)_n$ and $(\phi_n(A_n),\tilde{i}_n)_n$ of $A$ are equivalent, where $\tilde{i}_n:\varphi_n(A_n)\to\varphi_{n+1}(A_{n+1})$ are the induced u.c.i. homomorphisms given by $\phi_{n+1}i_n\phi_n^{-1}$.
\end{theorem}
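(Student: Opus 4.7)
The plan is to build a $*$-isomorphism between the two induced $C^*$-covers of $A$ directly from the hypothesized $*$-isomorphisms $\pi_n$. Write $\mathcal{C} = \prod_{n \geq 0} C^*(A_n)$ and $\mathcal{D} = \prod_{n \geq 0} \mathcal{A}_n$, with their respective closed ideals $I$ and $I'$ of norm-null sequences. By construction, the induced $C^*$-cover of $(A_n,i_n)_n$ sits inside $\mathcal{C}/I$, while the induced $C^*$-cover of $(\varphi_n(A_n),\tilde{i}_n)_n$ is $(C^*(B),\pi)$, where $B \subseteq \mathcal{D}/I'$ is the corresponding direct limit algebra and $\pi: A \to B$ is the surjective u.c.i.\ map supplied by Proposition \ref{Prop:Direct systems}.

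First I would define a shift-by-one $*$-homomorphism $\Theta: \mathcal{C} \to \mathcal{D}$ by $\Theta((x_n))_0 = 1_{\mathcal{A}_0}$ and $\Theta((x_n))_k = \pi_{k-1}(x_{k-1})$ for $k \geq 1$; the value at the zeroth coordinate is immaterial because it will be absorbed in $\mathcal{D}/I'$. Since each $\pi_n$ is an isometric unital $*$-isomorphism, $\Theta$ is unital and the preimage $\Theta^{-1}(I')$ coincides with $I$, so $\Theta$ descends to a unital, injective $*$-homomorphism $\bar{\Theta}: \mathcal{C}/I \to \mathcal{D}/I'$.

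Next, I would verify that $\bar{\Theta}$ sends each canonical generator $\lambda_n(a) \in A$ to $\tilde{\lambda}_n(\varphi_n(a)) = \pi(\lambda_n(a)) \in B$. Representing $\lambda_n(a)$ by the sequence $(x_j)_j$ with $x_j = 1$ for $j < n$ and $x_j = (i_{j-1}\cdots i_n)(a)$ for $j \geq n$, the commuting triangle \eqref{diagram:cover_commuting} gives, for $k > n$,
\[
\Theta((x_j))_k = \pi_{k-1}\bigl((i_{k-2}\cdots i_n)(a)\bigr) = \varphi_k\bigl((i_{k-1}\cdots i_n)(a)\bigr),
\]
which matches the defining formula for $\tilde{\lambda}_n(\varphi_n(a))$. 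The two sequences disagree only at the zeroth coordinate and at $k=n$, so they are equal modulo $I'$. Therefore $\bar{\Theta}|_A = \pi$ by the uniqueness statement in Proposition \ref{Prop:Direct systems}.

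Finally, since $\bar{\Theta}|_{C^*(A)}$ is an injective $*$-homomorphism, its image is the $C^*$-subalgebra of $\mathcal{D}/I'$ generated by $\bar{\Theta}(A) = \pi(A) = B$, which is precisely $C^*(B)$. Thus $\bar{\Theta}$ restricts to a $*$-isomorphism $C^*(A) \to C^*(B)$ extending $\pi$, establishing the equivalence of the two induced $C^*$-covers. The main technical care lies in bookkeeping the index shift by one and the stray zeroth coordinate, but since $I'$ absorbs behavior at any finite set of coordinates, this poses no essential obstacle.
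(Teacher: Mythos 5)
Your proof is correct and follows essentially the same route as the paper's: both construct the shift-by-one map between the $\ell^\infty$-products of the two families of C*-covers, pass to the quotients by the null-sequence ideals, and use the commuting triangle to check agreement with $\pi$ on the generators $\lambda_n(a)$, which is all that is needed since they are dense. One cosmetic point: with $1_{\mathcal{A}_0}$ in the zeroth slot your $\Theta$ fails to be additive there and so is not literally a $*$-homomorphism on $\mathcal{C}$; putting $0$ in that coordinate (as the paper does) fixes this without changing the induced map on $\mathcal{D}/I'$.
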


\begin{proof}
With the above notations, let $\mathcal{B}_1$ be the $\ell^\infty$-sum of $(C^*(A_n))_n$ and $\mathcal{B}_2$ the $\ell^\infty$-sum of $(\mathcal{A}_n)_n$. It is clear that 
  \begin{align*}
  \Phi:\mathcal{B}_1&\to\mathcal{B}_2, \\ (x_n)&\mapsto (0,\pi_0(x_0),\pi_1(x_1),\dots )
  \end{align*}
is an injective $*$-homomorphism. Set, for $i=1,2$,
  \[
    I_i=\{(x_n)\in\mathcal{B}_i\ |\ \lim_{n\to\infty}\|x_n\|=0\}.
  \]
It is straightforward to verify that
  \begin{align*}
  \pi:\mathcal{B}_1/I_1&\to\mathcal{B}_2/I_2, \\ (x_n)+I_1&\mapsto (x_n)+I_2
  \end{align*}
is well-defined and a $*$-isomorphism.

Recall that $C^*(A)\subset\mathcal{B}_1/I_1$ together with the identity embedding is the induced $C^*$-cover of $(A_n,i_n)_n$, and denote by $(\tilde{\mathcal{A}},j)$ the induced $C^*$-cover of $A$ with respect to $(\mathcal{A}_n,\phi_n)_n$. We have by construction of the induced $C^*$-cover that $\tilde{\mathcal{A}}\subseteq\mathcal{B}_2/I_2$. To obtain that these two $C^*$-covers are equivalent, it suffices to show that
  \[
    \pi|_A=j.
  \]
Let $\lambda_n$ and $\tilde{\lambda}_n$ be the u.c.i. homomorphisms to each direct limit operator algebra resulting from the construction of the direct limit with respect to $(A_n,i_n)$, and $(\phi_n(A_n),\tilde{i}_n)$. Using that $\bigcup_{n=1}^\infty\lambda_n(A_n)$ is dense in $A$ and the definition of $\lambda_n$ and $\tilde{\lambda}_n$, as well as $j\lambda_n=\tilde{\lambda}_n\phi_n$, it is enough to show that for every $m>n$ and $x\in A_n$
  \begin{equation}\label{Eq:Direct limit}
    (\pi_{m-1}i_{m-2}i_{m-3}\dots i_n)(x)=(\tilde{i}_{m-1}\tilde{i}_{m-2}\dots \tilde{i}_n)(\varphi_n(x)).
  \end{equation}
First we use that $\tilde{i}_n\phi_n=\phi_{n+1}i_n$ to obtain
  \[
    (\tilde{i}_{m-1}\tilde{i}_{m-2}\dots \tilde{i}_n\varphi_n)(x)=(\varphi_mi_{m-1}i_{m-2}\dots i_n)(x).
  \]
Now the assumption that the diagram \ref{diagram:cover_commuting} commutes yields that $\varphi_mi_{m-1}=\pi_{m-1}$, showing that Equation \eqref{Eq:Direct limit} holds, and therefore the theorem is proven. 
\end{proof}

Proposition \ref{Prop:Direct systems} allows us to view the operator algebras $A_n$ as subalgebras of $A$, ordered by inclusion. This allows us to omit the embeddings $i_n:A_n\to A_{n+1}$ and $\lambda_n:A_n\to A$, which we will do for the rest of this section.

So far, we have shown that choosing specific C*-covers of each operator algebra $A_n$ naturally leads to a C*-cover of the resulting direct limit operator algebra $A$. The next result states that every C*-cover of $A$ arises in this fashion.

\begin{lemma}\label{lem:C*-cover direct system}
Let $(A_n,i_n)$ be a direct system of operator algebras and $(\mathcal{A},j)$ be a $C^*$-cover of the direct limit $A$. Then the induced $C^*$-cover of $A$ with respect to the $C^*$-covers $(C^*(j(A_n)),j|_{C^*(A_n)})$ is equivalent to $(\mathcal{A},j)$.
\end{lemma}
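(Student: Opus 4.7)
The plan is to realize both $\mathcal{A}$ and the induced $C^*$-cover as $C^*$-algebra inductive limits of the same ascending chain of $C^*$-subalgebras $\{C^*(j(A_n))\}_n$, and then to exhibit the $*$-isomorphism between them that intertwines the embeddings of $A$. Once both realizations are in place, the result follows essentially from the uniqueness of inductive limits of $C^*$-algebras with injective connecting maps.

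For the first realization, since $j$ is completely isometric and $A=\overline{\bigcup_n A_n}$, we have $j(A)=\overline{\bigcup_n j(A_n)}$, so the ascending chain $C^*(j(A_n))\subseteq C^*(j(A_{n+1}))\subseteq \mathcal{A}$ has union dense in $\mathcal{A}$:
\[
\mathcal{A}=C^*(j(A))=\overline{\bigcup_{n=1}^{\infty}C^*(j(A_n))}.
\]

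For the second realization, I would extend the map $\tilde{\lambda}_n\colon j(A_n)\to B$ from the direct-limit construction to a $*$-homomorphism
\[
\tilde{\Lambda}_n\colon C^*(j(A_n))\longrightarrow C^*(B)\subseteq\Bigl(\prod_k C^*(j(A_k))\Bigr)\big/I,\qquad \tilde{\Lambda}_n(x)=(x_k)_k+I,
\]
with $x_k=1$ for $k<n$ and $x_k=x$ for $k\geq n$, using that $C^*(j(A_n))\subseteq C^*(j(A_k))$ when $k\geq n$. Two observations do the work: (a) $\tilde{\Lambda}_n$ is injective, because $\tilde{\Lambda}_n(x)\in I$ forces $\|x_k\|\to 0$ and hence $x=0$; and (b) $\tilde{\Lambda}_{n+1}|_{C^*(j(A_n))}=\tilde{\Lambda}_n$, since the two defining sequences differ in only one coordinate and so lie in the same coset of $I$. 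As $\tilde{\Lambda}_n(C^*(j(A_n)))$ is the $C^*$-subalgebra of $C^*(B)$ generated by $\tilde{\lambda}_n(j(A_n))\subseteq B$, the increasing family $\{\tilde{\Lambda}_n(C^*(j(A_n)))\}_n$ has union dense in $C^*(B)$, giving
\[
C^*(B)=\overline{\bigcup_{n=1}^{\infty}\tilde{\Lambda}_n\bigl(C^*(j(A_n))\bigr)}.
\]

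Finally, I would define $\Psi$ on the dense subalgebra by $\Psi(\tilde{\Lambda}_n(x)):=x$ for $x\in C^*(j(A_n))$, viewed inside $\mathcal{A}$. Compatibility (b) makes $\Psi$ well defined, and (a) makes each $\tilde{\Lambda}_n$ isometric, so $\Psi$ is isometric on each rung of the chain and extends uniquely to an isometric $*$-homomorphism $\Psi\colon C^*(B)\to\mathcal{A}$. Its image contains every $C^*(j(A_n))$, hence equals $\mathcal{A}$, so $\Psi$ is a $*$-isomorphism. The identity $\Psi\circ\pi=j$ is then checked on the dense subalgebra $\bigcup_n A_n\subseteq A$: for $a\in A_n$, Proposition \ref{Prop:Direct systems} gives $\pi(a)=\tilde{\lambda}_n(j(a))=\tilde{\Lambda}_n(j(a))$, whence $\Psi(\pi(a))=j(a)$. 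The main obstacle will be careful bookkeeping through the various identifications $A_n\subseteq A$, $j(A_n)\subseteq C^*(j(A_n))\subseteq \mathcal{A}$, and the quotient $(\prod_k C^*(j(A_k)))/I$. The one place where the direct-limit construction interacts nontrivially with the quotient is the consistency (b), which however reduces to the elementary observation that finitely supported sequences vanish modulo $I$.
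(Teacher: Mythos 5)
Your proof is correct and follows essentially the same route as the paper: both realize the induced $C^*$-cover as the $C^*$-inductive limit of the increasing chain $C^*(j(A_1))\subseteq C^*(j(A_2))\subseteq\cdots$ and match it against $\mathcal{A}=\overline{\bigcup_n C^*(j(A_n))}$, checking the intertwining identity on the dense union $\bigcup_n A_n$. The only difference is that where the paper cites the uniqueness of $C^*$-inductive limits (Murphy, Theorem 6.1.2), you construct the isomorphism $\Psi$ explicitly via the maps $\tilde{\Lambda}_n$, which amounts to unpacking that citation.
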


\begin{proof}
Define $\mathcal{A}_n=C^*(j(A_n)), j_n=j|_{C^*(A_n)}$. Denote the induced $C^*$-cover of $A$ with respect to the $C^*$-covers $(\mathcal{A}_n,j_n)$ by $(\tilde{\mathcal{A}},\pi)$, and note that by the intertwining property of $\pi$ the u.c.i. homomorphisms from the construction of the direct limit of $(j(A_n),ji_{n}j^{-1})_n$ are given by $\pi\circ j_n^{-1}:j(A_n)\to\tilde{\mathcal{A}}$. We have to show that there exists a $*$-isomorphism $\Phi:\tilde{\mathcal{A}}\to\mathcal{A}$ such that 
  \[
    \Phi\circ \pi=j.
  \]
The sequence of $C^*$-algebras $(\mathcal{A}_n)_n$ is ordered by inclusion and therefore has a direct limit in the sense of $C^*$-algebras for which it is straightforward to verify that it coincides with $\tilde{\mathcal{A}}$ and the embedding of $\mathcal{A}_n\to\tilde{\mathcal{A}}$ extends $\pi\circ j_n^{-1}$. Furthermore, by \cite[Theorem 6.1.2 and Remark 6.1.3]{Murphy}, there exists a $*$-isomorphism $\Phi:\mathcal{A}\to\tilde{\mathcal{A}}$ that intertwines the embedding of $\mathcal{A}_n\subset\mathcal{A}$ and $\mathcal{A}_n\to\tilde{\mathcal{A}}$. Hence
  \[
    \Phi(j(a))=\Phi(j_n(a))=(\pi j^{-1}_n)(j(a))=\pi(a)
  \]
for all $a\in A_n$, and since $\bigcup_{n=1}^\infty A_n$ is dense in $A$, we obtain equivalence of the $C^*$-covers.
\end{proof}

\begin{theorem}\label{the:min/max C*cover direct system}
The induced $C^*$-cover of $A$ obtained by using the maximal $C^*$-covers of each $A_n$ yield a maximal $C^*$-cover of $A$. Analogously, the $C^*$-envelopes of every $A_n$ yield the $C^*$-envelope of $A$.
\end{theorem}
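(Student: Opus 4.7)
The plan is to leverage the universal properties characterising the two extreme elements of $\cstarlattice(A)$: Lemma \ref{lem:uni property} for the maximal cover and Lemma \ref{lem:empty Shilov} for the envelope. In both cases I write $(\tilde{\mathcal{A}},\pi)$ for the induced $C^*$-cover and set $\tilde{\mathcal{A}}_n:=C^*(\pi(\lambda_n(A_n)))$. By the discussion preceding Lemma \ref{lem:C*-cover direct system}, these form an increasing chain of $C^*$-subalgebras whose union is dense in $\tilde{\mathcal{A}}$, and each $\tilde{\mathcal{A}}_n$ is a $C^*$-cover of $A_n$. In either case the universal property of the chosen cover of $A_n$ yields a canonical surjective $*$-homomorphism $\beta_n$ from that cover onto $\tilde{\mathcal{A}}_n$, and understanding $\ker\beta_n$ is the crux of the argument.

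For the maximal case, write $(C^*_{max}(A_n),\mu_n)$ for each maximal cover and $(\mathcal{M},\pi)$ for the induced cover. The universal property also produces transition $*$-homomorphisms $\alpha_n:C^*_{max}(A_n)\to C^*_{max}(A_{n+1})$ satisfying $\alpha_n\mu_n=\mu_{n+1}i_n$, and the explicit $\ell^\infty$-product description yields
\[
\ker\beta_n=\{x\in C^*_{max}(A_n)\ |\ \lim_{k\to\infty}\|\alpha_{n+k-1}\circ\cdots\circ\alpha_n(x)\|=0\}.
\]
Given a u.c.c.\ homomorphism $\rho:A\to\mathcal{B}(H)$, I lift $\rho_n:=\rho\lambda_n$ to a $*$-homomorphism $\psi_n:C^*_{max}(A_n)\to\mathcal{B}(H)$ by Lemma \ref{lem:uni property}. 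Comparing on the generators $\mu_n(A_n)$ gives $\psi_{n+1}\alpha_n=\psi_n$, hence iteratively $\psi_n=\psi_{n+k}\circ\alpha_{n+k-1}\circ\cdots\circ\alpha_n$ and, by contractivity of $*$-homomorphisms, $\|\psi_n(x)\|\le\|\alpha_{n+k-1}\cdots\alpha_n(x)\|$ for every $k\ge 1$. So $\psi_n$ annihilates $\ker\beta_n$ and descends to $\tilde\psi_n:\tilde{\mathcal{A}}_n\to\mathcal{B}(H)$. The $\tilde\psi_n$ agree on the nested chain, assemble into a $*$-homomorphism on $\bigcup_n\tilde{\mathcal{A}}_n$, and extend continuously to $\Psi:\mathcal{M}\to\mathcal{B}(H)$ with $\Psi\pi=\rho$ on the dense subalgebra $\bigcup_n\lambda_n(A_n)$. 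Lemma \ref{lem:uni property} then gives maximality.

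For the envelope case, write $(C^*_e(A_n),\iota_n)$ for each envelope and $(\mathcal{E},\pi)$ for the induced cover. Since $\tilde{\mathcal{A}}_n$ is a $C^*$-cover of $A_n$ and $\beta_n\circ\iota_n=\pi\lambda_n$ is completely isometric, $\ker\beta_n$ is a boundary ideal in $C^*_e(A_n)$, and so vanishes by Lemma \ref{lem:empty Shilov}; thus $\beta_n:C^*_e(A_n)\to\tilde{\mathcal{A}}_n$ is a $*$-isomorphism. Now if $J\subseteq\mathcal{E}$ is any boundary ideal, then the injection $\tilde{\mathcal{A}}_n/(J\cap\tilde{\mathcal{A}}_n)\hookrightarrow\mathcal{E}/J$ shows that $J\cap\tilde{\mathcal{A}}_n$ is a boundary ideal of $A_n$ in $\tilde{\mathcal{A}}_n\cong C^*_e(A_n)$, hence $J\cap\tilde{\mathcal{A}}_n=0$. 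So the quotient map $\mathcal{E}\to\mathcal{E}/J$ is injective, hence isometric, on every $\tilde{\mathcal{A}}_n$; by density it is isometric on $\mathcal{E}$, forcing $J=0$. Lemma \ref{lem:empty Shilov} then identifies $(\mathcal{E},\pi)$ as the $C^*$-envelope.

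The main obstacle is the maximal case, because the transition maps $\alpha_n$ are generally far from injective, so $\tilde{\mathcal{A}}_n$ is typically a proper quotient of $C^*_{max}(A_n)$ rather than an isomorphic copy; one cannot simply glue the $\psi_n$. The contractivity-and-iteration trick above is the clean workaround: iterating $\psi_n=\psi_{n+k}\alpha_{n+k-1}\cdots\alpha_n$ forces $\ker\beta_n\subseteq\ker\psi_n$ even when each of these ideals is large.
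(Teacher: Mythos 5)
Your argument for the maximal half is correct, and it is genuinely different from the paper's proof: the paper deduces both halves at once from the monotonicity of the induced-cover construction together with Lemma \ref{lem:C*-cover direct system} (every $C^*$-cover of $A$ is an induced cover), whereas you verify the universal property of Lemma \ref{lem:uni property} directly. The key points there all check out: the transition maps $\alpha_n$ exist because $C^*_{\max}$ has a universal property producing morphisms \emph{out} of it, $\beta_n$ agrees with $x\mapsto(\dots,1,x,\alpha_n(x),\dots)+I$ because both are $*$-homomorphisms agreeing on the generating set $\mu_n(A_n)$, and your iteration trick correctly forces $\ker\beta_n\subseteq\ker\psi_n$.

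The envelope half, however, has a genuine gap at the very first step: the surjection $\beta_n:C^*_e(A_n)\to\tilde{\mathcal{A}}_n$ with $\beta_n\iota_n=\pi\lambda_n$ need not exist. The universal property of the $C^*$-envelope produces morphisms \emph{into} $C^*_e(A_n)$ from any cover of $A_n$, not out of it; a morphism in your direction exists if and only if $(\tilde{\mathcal{A}}_n,\pi\lambda_n)$ is itself equivalent to the envelope of $A_n$, which is essentially what you would need to prove and which is false in general. Concretely, take $A_0=A(\bbD)$ and $A_n=C(\overline{\bbD})$ for $n\ge 1$ with the obvious inclusions. Then $C^*_e(A_0)=C(\bbT)$, but $\tilde{\mathcal{A}}_0\cong C^*(i_0(A_0))=C(\overline{\bbD})$ inside the limit, and there is no $*$-homomorphism $C(\bbT)\to C(\overline{\bbD})$ sending $z$ to $z$ (the image of a unitary must be unitary). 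Worse for your subsequent step, $\tilde{\mathcal{A}}_0\cong C(\overline{\bbD})$ has the nonzero boundary ideal $C_0(\bbD)$ for $A(\bbD)$, so the implication ``$J\cap\tilde{\mathcal{A}}_n$ is a boundary ideal for $A_n$ in $\tilde{\mathcal{A}}_n$, hence zero'' fails. This is not a pathology confined to the bottom level: in the paper's own construction of Theorem \ref{KW construction} (see Remark \ref{Remark: KW=A+I}), the subalgebra $C^*(\pi\lambda_n(A_n))$ of the limit envelope is a copy of $C^*_{\max}(A_n)$, not of $C^*_e(A_n)$, for \emph{every} $n$, even though the theorem's conclusion holds there. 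The underlying obstruction is that a u.c.i.\ homomorphism $A_n\to A_{n+1}$ does not induce a $*$-homomorphism $C^*_e(A_n)\to C^*_e(A_{n+1})$, which is precisely why the paper routes the envelope case through Lemma \ref{lem:C*-cover direct system}: given any cover $(\mathcal{A},j)$ of $A$, it is induced from the covers $C^*(j(A_n))$, each of which dominates $C^*_e(A_n)$, and monotonicity of induction then places the envelope-induced cover below $(\mathcal{A},j)$. You should replace your envelope argument with this one (or otherwise prove directly that a boundary ideal for all of $\pi(A)$, not merely for $\pi\lambda_n(A_n)$, must meet each $\tilde{\mathcal{A}}_n$ trivially).
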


\begin{proof}
It is straightforward to check that if $\mathcal{A}_n$ and $\mathcal{B}_n$ are $C^*$-covers of $A_n$ such that $\mathcal{A}_n\preceq \mathcal{B}_n$ then $\mathcal{A}\preceq \mathcal{B}$, where $\mathcal{A}$ is the induced $C^*$-cover corresponding to $(\mathcal{A}_n)_n$ and $\mathcal{B}$ the one to $(\mathcal{B}_n)_n$. Since by Lemma \ref{lem:C*-cover direct system}, every $C^*$-cover of $A$ is equivalent to a induced $C^*$-cover, the result follows.
\end{proof}

As a nice consequence, the expected universal property of any direct limit now follows immediately by extending all u.c.c. homomorphisms involved to the respective maximal C*-algebras.

\begin{lemma}\label{lemma: direct system lim homomorphisms}
    Let $(A_n,i_n)$ be a direct system, $A$ its direct limit and $\pi_n$ u.c.c. homomorphisms on $A_n$ such that 
      \[
        \pi_{n+1}\circ i_n=\pi_n.
      \]
    Then there exists a u.c.c. homomorphism $\pi$ on $A$ such that
      \[
        \pi\circ \lambda_n=\pi_n,
      \]
      where $\lambda_n:A_n\to A$ are the u.c.i. homomorphisms from the construction of $A$.
\end{lemma}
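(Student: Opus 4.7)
The plan is to reduce this to the universal property of the maximal $C^*$-cover (Lemma \ref{lem:uni property}) combined with Theorem \ref{the:min/max C*cover direct system}. The key observation is that u.c.c.\ homomorphisms on an operator algebra are in bijection with unital $*$-homomorphisms out of its maximal $C^*$-cover, so the problem becomes one about compatible $*$-homomorphisms between $C^*$-algebras, for which the direct limit universal property is standard.

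First, I would apply Lemma \ref{lem:uni property} to each $\pi_n$ to obtain a unital $*$-homomorphism $\tilde{\pi}_n:C^*_{max}(A_n)\to\mathcal{B}(H)$ extending $\pi_n$. The connecting u.c.i.\ maps $i_n:A_n\to A_{n+1}$, viewed as u.c.c.\ maps into $C^*_{max}(A_{n+1})$, similarly extend to $*$-homomorphisms $\alpha_n:C^*_{max}(A_n)\to C^*_{max}(A_{n+1})$. I would then verify the compatibility $\tilde{\pi}_{n+1}\circ\alpha_n=\tilde{\pi}_n$, which amounts to checking the equality of two $*$-homomorphisms on $A_n$ (a generating set for $C^*_{max}(A_n)$) and follows immediately from the hypothesis $\pi_{n+1}\circ i_n=\pi_n$ together with the intertwining properties of $\alpha_n$ and $\tilde{\pi}_{n+1}$.

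With this compatible family of $*$-homomorphisms in hand, I would invoke Theorem \ref{the:min/max C*cover direct system} to identify the $C^*$-algebraic direct limit of the system $(C^*_{max}(A_n),\alpha_n)$ with a maximal $C^*$-cover $C^*_{max}(A)$, equipped with embeddings $\mu_n:C^*_{max}(A_n)\to C^*_{max}(A)$ satisfying $\mu_n|_{A_n}=\lambda_n$ by the intertwining part of Proposition \ref{Prop:Direct systems}. The standard universal property of $C^*$-algebraic direct limits then produces a unique unital $*$-homomorphism $\tilde{\pi}:C^*_{max}(A)\to\mathcal{B}(H)$ with $\tilde{\pi}\circ\mu_n=\tilde{\pi}_n$, and setting $\pi:=\tilde{\pi}|_A$ yields a u.c.c.\ homomorphism on $A$ for which $\pi\circ\lambda_n=\tilde{\pi}\circ\mu_n|_{A_n}=\tilde{\pi}_n|_{A_n}=\pi_n$.

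There is no real mathematical obstacle here; the only subtlety is bookkeeping to ensure that the embeddings $\mu_n$ arising from the $C^*$-algebraic direct limit construction are precisely the ones that restrict to $\lambda_n$ on $A_n$. This is exactly guaranteed by the intertwining clause of Proposition \ref{Prop:Direct systems}, and it is the reason Theorem \ref{the:min/max C*cover direct system} is the right input: without the identification of the induced $C^*$-cover from maximal covers with $C^*_{max}(A)$, one could not pass freely between the operator algebra and $C^*$-algebra pictures of the direct limit.
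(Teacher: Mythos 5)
Your proposal is correct and is essentially the paper's own argument: the paper gives no detailed proof of this lemma, only the remark immediately preceding it that the universal property ``follows immediately by extending all u.c.c.\ homomorphisms involved to the respective maximal C*-algebras,'' which is precisely the route you take via Lemma \ref{lem:uni property}, Theorem \ref{the:min/max C*cover direct system}, and the universal property of C*-algebraic direct limits. Your write-up simply supplies the bookkeeping (compatibility of the $\tilde{\pi}_n$ and the identification $\mu_n|_{A_n}=\lambda_n$) that the authors leave implicit.
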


\section{Non-trivial one point lattice example}\label{Section:non-trivial one point lattice example}

 In this section, we will give a general construction of a non-selfadjoint operator algebra with only one $C^*$-cover, i.e. the maximal and minimal C*-covers coincide, thus answering Question 3.1 in \cite{HumRam}. It shall be noted that our construction is inspired by the one in \cite[Proposition 16]{KirWas}, which Kirchberg and Wassermann used to construct operator systems with only one C*-cover. Note that our results are novel and cannot be obtained as a corollary of the Kirchberg-Wassermann construction, because their construction takes place in the operator system category in which multiplicative structure is not preserved.

Before proceeding, we isolate a result that will be used to obtain (closed) operator algebras repeatedly throughout.

\begin{lemma}\label{lem:isometric_quotient_closed}
Let $X$ be a Banach space, and let $Y,Z\subseteq X$ be closed subspaces. If the quotient map $X\to X/Y$ is isometric on $Z$, then $Z+Y$ is closed.
\end{lemma}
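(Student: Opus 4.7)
The plan is a direct sequential argument: take a convergent sequence in $Z+Y$ and show its limit lies in $Z+Y$, using the isometry hypothesis to move a Cauchy condition from the quotient back into $Z$.

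First I would fix a sequence $x_n = z_n + y_n \in Z + Y$ (with $z_n \in Z$, $y_n \in Y$) converging in $X$ to some $x$, and push it through the quotient map $q \colon X \to X/Y$. Since $q$ kills $y_n$, we get $q(z_n) = q(x_n) \to q(x)$ in $X/Y$; in particular $(q(z_n))$ is Cauchy. Now I would invoke the hypothesis that $q$ is isometric on $Z$: this gives
\[
\|z_n - z_m\|_X = \|q(z_n) - q(z_m)\|_{X/Y}
\]
for all $n,m$, so $(z_n)$ is Cauchy in $X$. Because $Z$ is a closed subspace of the Banach space $X$, it is complete, and thus $z_n \to z$ for some $z \in Z$.

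From here the proof closes easily: $y_n = x_n - z_n \to x - z$, and since $Y$ is closed, $x - z \in Y$. Hence $x = z + (x-z) \in Z + Y$, proving $Z+Y$ is closed.

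I do not anticipate a genuine obstacle; the only subtlety is recognizing that the isometry assumption converts a quotient Cauchy condition into a Cauchy condition in $Z$ itself, which is exactly what allows us to split the limit. One could alternatively phrase the argument abstractly by noting that $q|_Z$ being isometric forces $q(Z)$ to be closed in $X/Y$ and then identifying $Z+Y$ with $q^{-1}(q(Z))$, which is closed as the preimage of a closed set under a continuous map; but the sequential version above is more elementary and self-contained.
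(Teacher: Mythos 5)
Your sequential argument is correct: the only point that needs care is that the isometry of $q$ on the subspace $Z$ turns the Cauchy condition on $(q(z_n))$ into one on $(z_n)$, and you handle this exactly right, then use completeness of the closed subspace $Z$ and closedness of $Y$ to split the limit. The paper's proof is the two-line argument you mention at the end as an alternative: since $q\vert_Z$ is isometric and $Z$ is complete, $q(Z)$ is closed in $X/Y$, and $Z+Y=q^{-1}(q(Z))$ is then closed as the continuous preimage of a closed set. The two routes are essentially equivalent in content --- the closedness of $q(Z)$ in the paper's version is itself proved by the same Cauchy-sequence transfer you carry out explicitly --- but the preimage formulation packages it more compactly and makes the identity $Z+Y=q^{-1}(q(Z))$ visible, which is conceptually useful elsewhere in the paper; your version is more elementary and self-contained. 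Either is acceptable.
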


\begin{proof}
Let $q:X\to X/Y$ be the quotient map. Because $q$ is isometric on $Z$, the image $q(Z)$ is closed. So, $Z+Y=q^{-1}(q(Z))$ is the continuous preimage of a closed set, which is closed.
\end{proof}

Our construction will require the following approach to enlarging an operator algebra. Given an operator algebra concretely embedded in some C*-cover, we can extend it by adding the corresponding Shilov ideal.

\begin{proposition}\label{prop:shilov_extension}
Let $A\subseteq C^\ast(A)$ be a concrete operator algebra with Shilov ideal $I\triangleleft C^\ast(A)$. The set $A+I$ is a closed subalgebra containing $I$ as a self-adjoint ideal, and the sum $A+I$ is direct in the sense that $A\cap I=0$. The projection map $p:A+I\to A$ given by $p\vert_A=\id_A$ and $p\vert_I=0$ is a completely contractive homomorphism.

The Shilov ideal for $A+I$ in $C^\ast(A)$ is $\{0\}$, and so $C^\ast(A)$ is the C*-envelope of $A+I$. Also, if $A$ is non-self-adjoint, then so is $A+I$.
\end{proposition}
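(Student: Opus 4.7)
The proposition bundles several small claims with one substantive one, so my plan is to clear the structural and Shilov-ideal parts quickly and reserve most effort for the non-self-adjointness assertion, which will be the main obstacle. Throughout, the reusable input will be the defining Shilov property: $q_I : C^*(A) \to C^*(A)/I$ is completely isometric on $A$.

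For the structural claims I would proceed as follows. Closedness of $A+I$ is immediate from the preceding Lemma~\ref{lem:isometric_quotient_closed} applied with $X = C^*(A)$, $Y = I$, $Z = A$. The subalgebra property follows from $AI, IA, I^2 \subseteq I$, and $I$ is self-adjoint as a closed two-sided ideal in a C*-algebra. The identity $A \cap I = \{0\}$ comes straight from $q_I$ being isometric on $A$. The projection $p$ is then well-defined on $A + I$, and writing $p = (q_I|_{A})^{-1} \circ q_I|_{A+I}$ exhibits it as a $*$-homomorphism composed with the inverse of a complete isometry, hence a completely contractive homomorphism. For the Shilov-ideal assertion, I would take any boundary ideal $J$ for $A+I$ in $C^*(A)$: on the one hand $q_J|_A$ is completely isometric, so $J$ is a boundary ideal for $A$ and hence $J \subseteq I$ by maximality of the Shilov ideal; on the other hand $q_J|_I$ is isometric, which forces $J \cap I = \{0\}$. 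Together these give $J = \{0\}$, and Lemma~\ref{lem:empty Shilov} then identifies $C^*(A)$ as the C*-envelope of $A+I$.

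The main obstacle is the non-self-adjointness claim. My plan is to prove the contrapositive: suppose $A+I$ is self-adjoint. As a closed $*$-subalgebra of $C^*(A)$ containing $A$, it must equal all of $C^*(A)$, so $q_I|_A$ becomes a completely isometric unital algebra isomorphism of $A$ onto the C*-algebra $C^*(A)/I = C^*_e(A)$. Pulling the C*-involution back along this isomorphism endows $A$ with an abstract C*-algebra structure $\sharp$, via $a^\sharp := (q_I|_A)^{-1}(q_I(a)^*)$. The crux of the argument is then the classical fact that a unital completely isometric algebra homomorphism between C*-algebras is automatically a $*$-homomorphism (a unital complete contraction out of a C*-algebra is completely positive, hence adjoint-preserving, and a multiplicative adjoint-preserving unital map is a $*$-homomorphism). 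Applying this to the inclusion $\iota : (A,\sharp) \hookrightarrow C^*(A)$ yields $a^* = \iota(a)^* = \iota(a^\sharp) = a^\sharp \in A$ for every $a \in A$. Hence $A$ is closed under the ambient involution, contradicting the non-self-adjointness of $A$.
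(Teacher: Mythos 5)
Your proposal is correct and follows essentially the same route as the paper's proof: closedness via Lemma~\ref{lem:isometric_quotient_closed}, $A\cap I=\{0\}$ and complete contractivity of $p$ from the Shilov property of $q_I$, triviality of boundary ideals for $A+I$ by combining $J\subseteq I$ with $J\cap I=\{0\}$, and non-self-adjointness via the observation that $A+I$ self-adjoint forces $A+I=C^\ast(A)$. The only difference is cosmetic: where the paper simply notes that $q_I$ is then a $\ast$-homomorphism whose range is the copy of $A$ in $C^\ast_e(A)$, you pull the involution back along $q_I\vert_A$ and invoke the automatic $\ast$-homomorphism property of unital completely isometric homomorphisms to conclude $A=A^\ast$ inside $C^\ast(A)$ — a slightly more explicit justification of the same final step.
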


\begin{proof}
Because $I$ is an ideal, it follows that $A+I$ is a subalgebra containing $I$ as an ideal. Because the quotient map $q:C^\ast(A)\to C^\ast(A)/I$ is completely isometric on $A$, and $q$ annihilates $I$, any element $x\in A\cap I$ satisfies $\|x\|=\|q(x)\|=\|0\|=0$, so $A\cap I=0$. Moreover, because $q$ is isometric on $A$, the subalgebra $A+I$ is closed by Lemma \ref{lem:isometric_quotient_closed}. Restricting $q$ to $A+I$ gives a completely contractive homomorphism $p:A+I\to q(A)=(A+I)/I\cong A$ satisfying $p\vert_A=\id$ and $p\vert_I=0$.

Now, let $J\triangleleft C^\ast(A)$ be a boundary ideal for $A+I$. Since $A\subseteq A+I$, the ideal $J$ is also a boundary ideal for $A$, and so $J\subseteq I$. But, the quotient map $C^\ast(A)\to C^\ast(A)/J$ must be completely isometric on $I\subseteq A+I$, and so $I\cap J=\{0\}$. Therefore $C^\ast(A)$ is the C*-envelope for $A+I$.

Finally, suppose $A+I$ is self-adjoint, then it would be a C*-algebra and hence $C^\ast(A)=A+I$. Therefore the quotient map $A+I=C^\ast(A)\to C^\ast(A)/I \cong C^\ast_e(A)$ is a $\ast$-homomorphism whose range is the copy of $A$ in $C_e^\ast(A)$. Therefore $A$ itself is self-adjoint.
\end{proof}

Given an abstract operator algebra, which is represented uniquely only up to completely isometric isomorphism, this amounts to fixing a given C*-cover $(C,\iota)$ of $A$, and passing to the subalgebra $\iota(A)+I$. Frequently, in proofs, it is usually simpler to consider $\iota$ to be the identity map, which simplifies notation greatly without losing generality.

\begin{example}
Consider the operator algebra $T_2\subseteq M_2$, and the completely contractive homomorphism $\pi:T_2\to \bbC$ given by
\[
\pi\begin{pmatrix}
    a & b \\
     & c
\end{pmatrix} =
a.
\]
Then, with $\id:T_2\to M_2$ the inclusion map, consider the operator algebra
\[
A = 
(\pi\oplus \id)(T_2) =
\left\{\begin{pmatrix}
    a & & \\
     & a & b \\
    & & c 
    \end{pmatrix} \;\middle|\;
    a,b,c\in \bbC\right\}\subseteq T_2.
\]
The operator algebra $A$ is just a completely isometric copy of $T_2$, but embedded in a C*-algebra $\bbC\oplus M_2$ which is not its envelope. Here, the Shilov ideal is $I=\bbC \oplus 0$, and the extension of $A$ by $I$ is
\[
A+I = \bbC \oplus T_2,
\]
and indeed $C^\ast(A+I)=\bbC\oplus M_2$ is the C*-envelope. By \cite[Corollary 2.8]{HumRam}, $T_2$ and the direct sum $\bbC\oplus T_2$ have isomorphic C*-cover lattices. The main result of Section \ref{Section:Extension of an operator algebra by a Shilov ideal}, Theorem \ref{thm:shilov_extension_lattice_isomorphism}, is that this always occurs: $A$ and $A+I$ are lattice isomorphic.

Note that even though $A\cap I = \{0\}$ in the operator algebra $A+I$, the operator algebra is not completely isometrically isomorphic to the operator algebra direct sum $A\oplus I$ via the map $(a,x)\to a+x$ for $a\in A$ and $x\in I$.
\end{example}

\begin{example}
By \cite[Example 2.4]{Blech}, the maximal C*-cover of $T_2$ is $(C^\ast_{\max}(T_2),i_{\max})$ can be described explicitly as
\[
C^\ast_{\max}(T_2) =
\{F\in C([0,1],M_2) \;\mid \; F(0) \text{ is diagonal}\},
\]
with embedding
\[
i_{\max}\left(\begin{pmatrix}
    a & b \\
    0 & c
\end{pmatrix}\right)(t) =
\begin{pmatrix}
    a & b\sqrt{t} \\
    0 & c
\end{pmatrix}.
\]
Let $A:=i_{\max}(T_2)$. The unique C*-cover morphism down to the C*-envelope $C^\ast_e(T_2)\cong M_2$ is just the evaluation map $F\mapsto F(1)$. So, the Shilov ideal for $A$ is
\[
I = 
\{F\in C([0,1],M_2)\;\mid \; F(0) \text{ is diagonal and }F(1)=0\}.
\]
The extension of $A$ is therefore
\[
A+I =
\{F\in C([0,1],M_2) \;\mid\; F(0)\text{ is diagonal and } F(1) \text{ is upper triangular}\}.
\]
There is no closed subset of $[0,1]$ for which restriction is completely isometric on $A+I$, and so the Shilov ideal for $A+I$ in $C^\ast_{\max}(T_2)$ is trivial.
\end{example}

Next we will give a full characterization of every u.c.c. homomorphism on $A+I$ with respect to a u.c.c. homomorphism on $A$, which we will use several times throughout the rest of this section. The result should be compared to the decomposition result preceding Theorem 1.3.4 in \cite{Arv2}.

\begin{theorem}\label{ucc Decom}
Let $A$ be a non-selfadjoint operator algebra in $C^*(A)$ and $I$ the Shilov ideal of $A$. Then every u.c.c. homomorphism $\pi$ on $A+I$ has a decomposition
      \begin{equation*}
          \pi=\pi_1\oplus\pi_2
      \end{equation*}
into u.c.c. homomorphisms such that $\pi_1(I)=0$ and $\pi_2$ extends to a unital $*$-homomorphism on $C^*(A)$. Moreover, for every u.c.c. homomorphism $\pi$ on $A$, there is a u.c.c homomorphism $\tilde\pi$ on $A+I$ such that $\tilde\pi(I)=0$ and $\tilde\pi(a)=\pi(a)$ for all $a\in A$.
\end{theorem}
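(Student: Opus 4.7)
The strategy is to decompose $H$ into $\pi(A+I)$-invariant pieces via the essential subspace of $\pi|_I$, after first promoting $\pi|_I$ from a completely contractive homomorphism to a $*$-homomorphism. I would start with this promotion: since $C^*_e(A)\ne 0$, the ideal $I$ is proper, so $1\notin I$, and the subspace $\mathbb{C}1+I\subseteq A+I$ is a closed unital $*$-subalgebra of $C^*(A)$, hence a unital $C^*$-subalgebra, identified with the unitization $I^\sim$ by uniqueness of the $C^*$-norm. The restriction $\pi|_{\mathbb{C}1+I}$ is then a unital contractive homomorphism of a unital $C^*$-algebra into $\mathcal{B}(H)$, hence a $*$-homomorphism (unital contractive between unital $C^*$-algebras is positive by Kadison/Russo--Dye, and a positive homomorphism is a $*$-homomorphism). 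Restricting to $I$ gives the claim.

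Next, put $H_2:=\overline{\pi(I)H}$ and $H_1:=H_2^\perp$. Since $(A+I)\cdot I\subseteq I$, we immediately have $\pi(A+I)H_2\subseteq H_2$. For $H_1$: for $\xi\in H_1$, $y,z\in I$, $\zeta\in H$,
\[
\langle \pi(y)\xi,\pi(z)\zeta\rangle = \langle \xi,\pi(y^*z)\zeta\rangle = 0,
\]
using $\pi(y)^*=\pi(y^*)$ from Step 1 and $\pi(y^*z)\zeta\in H_2=H_1^\perp$; combined with $\pi(y)\xi\in H_2$ automatically, this forces $\pi(I)H_1=0$. Then for $a\in A$,
\[
\langle \pi(a)\xi,\pi(y)\zeta\rangle = \langle \pi(y^*a)\xi,\zeta\rangle = 0
\]
since $y^*a\in I$ and $\pi(I)H_1=0$, so $\pi(A)H_1\subseteq H_1$. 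Setting $\pi_i:=\pi|_{H_i}$ gives $\pi=\pi_1\oplus\pi_2$ with $\pi_1(I)=0$.

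To extend $\pi_2$, observe that $\pi_2|_I$ is a non-degenerate $*$-homomorphism of the ideal $I\triangleleft C^*(A)$ on $H_2$, and so extends uniquely to a unital $*$-homomorphism $\tilde\pi_2:C^*(A)\to \mathcal{B}(H_2)$ by the classical formula $\tilde\pi_2(c)\pi_2(y)\xi=\pi_2(cy)\xi$; agreement with $\pi_2$ on $A+I$ is immediate from $\pi(cy)=\pi(c)\pi(y)$ on the dense subspace $\pi(I)H_2\subseteq H_2$. The ``moreover'' clause is immediate from Proposition~\ref{prop:shilov_extension}: compose the given u.c.c. homomorphism $\pi$ on $A$ with the completely contractive projection $p:A+I\to A$, which vanishes on $I$ and restricts to $\id$ on $A$. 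The main obstacle is the initial step of showing $\pi|_I$ is a $*$-homomorphism; once this is in hand, everything that follows is a standard invariant-subspace decomposition combined with the classical extension of non-degenerate ideal representations.
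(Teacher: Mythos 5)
Your proof is correct and follows essentially the same route as the paper: decompose $H$ along the essential subspace $\overline{\pi(I)H}$ of $\pi|_I$, verify it is reducing via the same inner-product computations, extend the non-degenerate summand by the classical extension of a non-degenerate ideal representation, and obtain the ``moreover'' clause by composing with the projection $p:A+I\to A$. The only difference is cosmetic: where the paper simply asserts that $\pi|_I$ is a $*$-homomorphism (implicitly via the u.c.p.\ extension to $(A+I)+(A+I)^*$, as in Remark~\ref{rem:uniquely_determined}), you justify it through the unital C*-subalgebra $\mathbb{C}1+I$, and you also spell out the (correct) check that the ideal extension agrees with $\pi_2$ on all of $A+I$, a detail the paper leaves to the cited reference.
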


\begin{proof}
    If $\pi:A+I\to \mathcal{B}(H)$ is a u.c.c. homomorphism on $A+I$, then $K=\overline{\pi(I)(H)}$ is a reducing subspace. Indeed, we have, for all $a\in A+I, i\in I, x\in K, y\in K^\perp$ that
      \begin{equation*}
          \langle\pi(a)\pi(i)x,y\rangle=\langle\pi(ai)x,y\rangle=0
      \end{equation*}
    since $\pi(ai)x\in K$, and
       \begin{equation*}
          \langle\pi(i)x,\pi(a)y\rangle=\langle\pi(a)^*\pi(i)x,y\rangle=\langle(\pi(i^*a)^*x,y\rangle=\langle \pi(a^*i)x,y\rangle=0,
      \end{equation*}
where we used that $\pi$ restricted to $I$ is a $*$-homomorphism. 

Let 
      \begin{equation*}
          \pi=\pi_1\oplus\pi_2
      \end{equation*}
be the decomposition obtained by compressing to $K$ respectively $K^\perp$. It is straightforward to check that $\pi_2$ vanishes on $I$. On the other hand, since $\pi_1$ is non-degenerate by construction, it follows from \cite[Lemma 1.9.14]{Dav} that $\pi_1$ extends to a unital $*$-homomorphism on $C^*(A)$.

The additional note follows from the observation that the operator algebra $(A+I)/I$ can be identified with $A$ via an unital completely isometric homomorphism. Thus, if $\pi$ is a u.c.c. homomorphism on $A$, one can choose $\tilde \pi$ as the composition of $\pi$ with the quotient map.
\end{proof}

In Section \ref{Section:Extension of an operator algebra by a Shilov ideal}, we completely identify the structure of the C*-cover lattice of the extension $A+I$ by explicitly constructing an order isomorphism between it and the C*-cover lattice for $A$, however, we do not need this isomorphism in the proof of the following Kirchberg-Wassermann \cite[Proposition 16]{KirWas} type result.

\begin{theorem}\label{KW construction}
Every non-selfadjoint operator algebra has a unital completely isometric embedding into a non-selfadjoint operator algebra with only one $C^*$-cover.
\end{theorem}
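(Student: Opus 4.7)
The plan is to build $\tilde A$ as the direct limit of a sequence of non-selfadjoint enlargements, each obtained from the previous by adjoining the Shilov ideal of its maximal C*-cover. Starting from $A_0 := A$, I will iteratively define
\[
A_{n+1} := A_n + I_n \subseteq \Cmax(A_n),
\]
where $I_n$ is the Shilov ideal of $A_n$ inside $\Cmax(A_n)$. Proposition \ref{prop:shilov_extension} guarantees that each $A_{n+1}$ is a closed non-selfadjoint operator algebra containing $A_n$ completely isometrically, and crucially that $\Ce(A_{n+1}) = \Cmax(A_n)$. Taking $\tilde A$ to be the direct limit of the system $(A_n, i_n)$, the algebra $A = A_0$ embeds completely isometrically into $\tilde A$ via $\lambda_0$.

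To show $\tilde A$ is non-selfadjoint, I will assemble a u.c.c.\ retraction of $\tilde A$ onto $A$. Proposition \ref{prop:shilov_extension} supplies u.c.c.\ retracts $p_n : A_{n+1} \to A_n$ killing $I_n$. Their successive compositions $q_n := p_0 \circ p_1 \circ \cdots \circ p_{n-1} : A_n \to A_0$ satisfy $q_{n+1} \circ i_n = q_n$, so Lemma \ref{lemma: direct system lim homomorphisms} assembles them into a u.c.c.\ homomorphism $q : \tilde A \to A_0$ with $q \circ \lambda_0 = \id_{A_0}$. If $\tilde A$ were selfadjoint, it would be a unital C*-algebra, and composing $q$ with the inclusion $A_0 \hookrightarrow \Cmax(A_0)$ would yield a unital completely contractive homomorphism between unital C*-algebras---automatically a $\ast$-homomorphism---whose image is exactly $A_0$. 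This would force $A_0$ to be $\ast$-closed in $\Cmax(A_0)$, contradicting the non-selfadjointness of $A$.

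For the uniqueness of the C*-cover, I will show $\Cmax(\tilde A) \sim \Ce(\tilde A)$ via Theorem \ref{theorem:commuting diagram}. By Theorem \ref{the:min/max C*cover direct system}, these two covers of $\tilde A$ are, respectively, the induced C*-covers arising from the sequences $(\Cmax(A_n))_n$ and $(\Ce(A_n))_n$. Using the identification $\Ce(A_{n+1}) = \Cmax(A_n)$ at every step, the identity maps
\[
\pi_n = \id : \Cmax(A_n) \to \Ce(A_{n+1})
\]
serve as $\ast$-isomorphisms; restricted to $A_n$, both $\pi_n$ and $\varphi_{n+1} \circ i_n$ reduce to the natural inclusion $A_n \hookrightarrow \Cmax(A_n)$, so the commuting diagram hypothesis of Theorem \ref{theorem:commuting diagram} holds. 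The theorem then yields $\Cmax(\tilde A) \sim \Ce(\tilde A)$, so the C*-cover lattice of $\tilde A$ collapses to a single point.

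The main obstacle is the application of Theorem \ref{theorem:commuting diagram} in the final step: one has to verify that the canonical identification $\Ce(A_{n+1}) = \Cmax(A_n)$ coming from Proposition \ref{prop:shilov_extension} is compatible with the chain of inclusions $A_n \subseteq A_{n+1} \subseteq \Cmax(A_n)$ exactly as required by the commuting diagram, and to ensure that the default choice $C^*(A_n) = \Cmax(A_n)$ in the direct limit construction indeed produces $\Cmax(\tilde A)$ as the induced cover.
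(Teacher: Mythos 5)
Your construction and your proof that the lattice collapses are the same as the paper's: iterate the Shilov-ideal extension $A_{n+1}=A_n+I_n\subseteq \Cmax(A_n)$, take the direct limit, use Proposition \ref{prop:shilov_extension} to identify $\Ce(A_{n+1})$ with $\Cmax(A_n)$, and then invoke Theorem \ref{the:min/max C*cover direct system} together with Theorem \ref{theorem:commuting diagram} to see that the index shift makes the induced maximal cover and induced envelope coincide. (You take the maximal covers as the ``default'' choice and the envelopes as the second family, whereas the paper does the reverse; since $\pi_n=\id:\Cmax(A_n)\to\Ce(A_{n+1})$ restricts on $A_n$ to the same inclusion as $\varphi_{n+1}\circ i_n$, the hypothesis of Theorem \ref{theorem:commuting diagram} is met either way.)

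Where you genuinely diverge is the non-selfadjointness of the limit. The paper propagates a non-maximal u.c.c.\ representation of $A_0$ up the tower via Theorem \ref{ucc Decom} and Lemma \ref{lemma: direct system lim homomorphisms}, obtaining a non-trivially dilatable representation of $\tilde A$. You instead assemble the projections $p_n$ of Proposition \ref{prop:shilov_extension} into a u.c.c.\ retraction $q:\tilde A\to A_0$ (the compatibility $q_{n+1}\circ i_n=q_n$ holds because $p_n\circ i_n=\id_{A_n}$, so Lemma \ref{lemma: direct system lim homomorphisms} applies), and then argue that if $\tilde A$ were a C*-algebra, the unital completely contractive homomorphism $\iota\circ q$ into $\Cmax(A_0)$ would be automatically a $\ast$-homomorphism with range exactly the copy of $A_0$, forcing $A_0$ to be selfadjoint. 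This is correct: a unital completely contractive homomorphism of C*-algebras is completely positive, hence $\ast$-preserving, and the range of a $\ast$-homomorphism is a C*-subalgebra. It is also arguably cleaner, and it mirrors the argument the paper itself uses inside the proof of Proposition \ref{prop:shilov_extension} to show $A+I$ is non-selfadjoint; the trade-off is that the paper's dilation argument additionally exhibits an explicit non-maximal representation of $\tilde A$, which is of independent interest, while yours only refutes selfadjointness. Both are valid; your write-up has no gaps.
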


\begin{proof} Let $A_0$ be a unital, non-selfadjoint operator algebra. After identifying $A_0$ with its image in $C^*_e(A_0)$, we may assume that $C^*(A_0)$ is already the $C^*$-envelope. Fix a maximal $C^*$-cover $(C^*_{max}(A_0),i_0)$ and let $I_0$ denote the Shilov ideal of $i_0(A_0)$ in $C^*_{max}(A_0)$. 

For $n\ge0$,  proceed inductively. Given a maximal $C^*$-cover $(C^*_{max}(A_n),i_n)$ of $A_n$ and the Shilov ideal $I_n\subset C^*_{max}(A_n)$ of $i_n(A_n)$, set
  \[
    A_{n+1}=i_n(A_n)+I_n\subset C^*_{max}(A_n),
  \]
This yields a direct system of operator algebras
  \[
    i_n:A_n\to A_{n+1},
  \]
and the inductive system
  \begin{equation*}
      A_0\xrightarrow{i_0} A_1\xrightarrow{i_1} A_2\xrightarrow{i_2}\dots
  \end{equation*}
Let $A$ be the direct limit operator algebra of the sequence $(A_n)_n$ and let $\lambda_n:A_n\to A$ be the u.c.i. homomorphisms such that
  \begin{enumerate}
    \item $\lambda_n=\lambda_{n+1}i_n$,
    \item $A=\overline{\bigcup_{n=0}^\infty\lambda_n(A_n)}$.
  \end{enumerate}
By Lemma \ref{prop:shilov_extension}, $(C^*_{max}(A_n),\text{id})$ is an $C^*$-envelope of $A_{n+1}$. Applying Theorem \ref{the:min/max C*cover direct system} to the system $(A_n,i_n)_n$, we obtain that the induced $C^*$-cover is an $C^*$-envelope of $A$.

By the same theorem, we obtain a maximal $C^*$-cover of $A$ by the induced $C^*$-cover of $(C^*(i_n(A_n)),i_n)$. In this case, the u.c.i. homomorphism from $i_n(A_n)\to i_{n+1}(A_{n+1})$ is given by $i_{n+1}i_ni_n^{-1}=i_{n+1}$. Since the diagram
\[
\begin{tikzcd}
    & A_{n+1} \\
    i_n(A_n) \arrow[r, "i_{n+1}", swap ] \arrow[ur, "id" ] & i_{n+1}(A_{n+1}) \arrow[u,"i_{n+1}^{-1}",right,swap]
\end{tikzcd}
\]
commutes, we can apply Theorem \ref{theorem:commuting diagram} and obtain that the maximal $C^*$-cover and the $C^*$-envelope of $A$ are equivalent.

It remains to show that $A$ is non-selfadjoint. We will show that $A$ has at least one non-maximal representation. For any fixed $n\in\mathbb{N}$, assume that we have u.c.c. homomorphisms $\pi_n$ and $\Phi_n$ on $A_n$ such that $\Phi_n$ is a dilation of $\pi_n$. By Theorem \ref{ucc Decom}
  \begin{align*}
    \pi_{n+1}:i_n(A_n)+I_n&\to\mathcal{B}(H), \\i_n(x)+j&\mapsto \pi_n(x)
  \end{align*}
and 
  \begin{align*}
    \Phi_{n+1}:i_n(A_n)+I_n& \to\mathcal{B}(H), \\ i_n(x)+j&\mapsto \Phi_n(x)
  \end{align*}
are u.c.c. homomorphisms. We clearly have that $\Phi_{n+1}$ dilates $\pi_{n+1}$, and
  \[
    \pi_{n+1}\circ i_{n}=\pi_{n},\qquad \Phi_n\circ i_{n}=\Phi_{n}.
  \]
Since $A_0$ is non-selfadjoint, there exist u.c.c. homomorphisms $\pi_0$ and $\Phi_0$ on $A_0$ such that $\pi_0$ is non-trivially dilated by $\Phi_0$. Recursively this yields u.c.c. homomorphisms $(\pi_n)_n$ and $(\Phi_n)_n$ such that $\pi_n$ is dilated by $\Phi_n$ and 
  \[
    \pi_{n+1}\circ i_{n}=\pi_{n},\qquad \Phi_{n+1}\circ i_{n}=\Phi_{n}.
  \]
Hence by Lemma \ref{lemma: direct system lim homomorphisms} there exist u.c.c. homomorphisms $\pi$ and $\Phi$ on $A$ such that
  \[
    \pi\circ \lambda_n=\pi_n, \qquad \Phi\circ \lambda_n=\Phi_n.
  \]
Since $\pi_0=\pi\circ\lambda_0$ is non-trivially dilated by $\Phi_0=\Phi\circ\lambda_0$, we have that $\Phi$ is a non-trivial dilation of $\pi$. Thus $A$ is non-selfadjoint.
\end{proof}
\ \\
While it seems to be hard to compute the universal operator algebra constructed in the above proof--since one has to compute infinitely many maximal $C^*$-covers--it actually is not. The maximal $C^*$-covers are all closely related to the maximal $C^*$-cover of the initial operator algebra, as the next theorem will show.

\begin{theorem}\label{Cmax_A+I}
   Let $A$ be a non-selfadjoint operator algebra seen as a subalgebra of the maximal $C^*$-cover $(C^*_{max}(A),i_{max})$ and $I \subseteq C^*_{\max}(A)$ the Shilov ideal of $A$. Let 
    \begin{equation*}
        \mathcal{A}=\{(a+i,a+j)\ |\ a\in C^*_{max}(A), i,j \in I\}.
    \end{equation*}
    Then
    \begin{equation*}
    j:A+I\to \mathcal{A}, \ a+i\mapsto (a+i,a)
    \end{equation*}
    is well-defined, $(\mathcal{A},j)$ is the maximal $C^*$-cover of $A+I$, and $0\oplus I$ is the Shilov ideal of $j(A+I)$.
\end{theorem}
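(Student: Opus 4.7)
My approach is to identify $\mathcal{A}$ as a pullback-type C*-subalgebra, verify the C*-cover data for $j$, prove maximality via the decomposition theorem (Theorem \ref{ucc Decom}), and finally read off the Shilov ideal from the obvious quotient.

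I would first observe that
\[
\mathcal{A} = \{(x,y) \in C^*_{max}(A) \oplus C^*_{max}(A) \mid x - y \in I\},
\]
which is closed under sum, product, and adjoint since $I$ is a self-adjoint ideal, and hence is a genuine C*-subalgebra of $C^*_{max}(A) \oplus C^*_{max}(A)$. Well-definedness of $j(a+i) = (a+i, a)$ then uses $A \cap I = \{0\}$ from Proposition \ref{prop:shilov_extension}, and the unital homomorphism property is a direct check on the defining formula. For complete isometry, the norm on $\mathcal{A}$ is the maximum of the two coordinate norms; since the quotient $C^*_{max}(A) \to C^*_{max}(A)/I \cong C^*_e(A)$ is completely isometric on $A$ and annihilates $I$, one gets $\|[a_{kl}]\| = \|[a_{kl}+i_{kl}] \text{ mod } I\| \leq \|[a_{kl}+i_{kl}]\|$ at the matrix level, so the maximum is realized by the first coordinate. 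To verify $C^*(j(A+I)) = \mathcal{A}$, I would note that $j(a) = (a,a)$ and its adjoint generate the diagonal copy $\{(c,c) \mid c \in C^*_{max}(A)\}$, while $j(i) = (i,0)$ generates the ideal $I \oplus 0$; the identity $(a+i, a+k) = (a+k, a+k) + (i-k, 0)$ then exhibits every element of $\mathcal{A}$ as a sum of these two pieces.

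The core step is maximality, which I would prove via Lemma \ref{lem:uni property}. Given any u.c.c. homomorphism $\rho : A+I \to \mathcal{B}(H)$, Theorem \ref{ucc Decom} supplies a decomposition $\rho = \rho_1 \oplus \rho_2$ on $H = H_1 \oplus H_2$ with $\rho_1(I) = 0$ and $\rho_2$ extending to a $*$-homomorphism $\tilde\rho_2 : C^*_{max}(A) \to \mathcal{B}(H_2)$. Because $\rho_1$ factors through $(A+I)/I \cong A$, applying the universal property of the maximal C*-cover to this map produces a $*$-homomorphism $\tilde\rho_1 : C^*_{max}(A) \to \mathcal{B}(H_1)$ with $\tilde\rho_1|_A = \rho_1|_A$. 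Then the restriction to $\mathcal{A}$ of the direct-sum $*$-homomorphism $(x,y) \mapsto \tilde\rho_1(y) \oplus \tilde\rho_2(x)$ on $C^*_{max}(A) \oplus C^*_{max}(A)$ yields a $*$-homomorphism $\phi : \mathcal{A} \to \mathcal{B}(H_1 \oplus H_2)$ satisfying
\[
\phi(j(a+i)) = \tilde\rho_1(a) \oplus \tilde\rho_2(a+i) = \rho_1(a+i) \oplus \rho_2(a+i) = \rho(a+i).
\]
The conceptual hurdle here, and what I expect to be the main obstacle, is arranging the two coordinates of $\mathcal{A}$ so that they accommodate the two pieces correctly: the first coordinate must carry $\tilde\rho_2$ (the part that extends honestly on $I$), while the second coordinate must carry $\tilde\rho_1$ (which came from killing $I$ and then lifting through universality). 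The asymmetric formula $j(a+i) = (a+i, a)$ together with the asymmetric condition $x - y \in I$ are designed precisely to make this pairing go through.

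Finally, for the Shilov ideal, I would take $J = 0 \oplus I = \{(0, i) \mid i \in I\}$, which is clearly a closed two-sided ideal of $\mathcal{A}$. The first-coordinate projection $\mathcal{A} \to C^*_{max}(A)$, $(x,y) \mapsto x$, has kernel exactly $J$ and thus induces an isomorphism $\mathcal{A}/J \cong C^*_{max}(A)$ under which the composition of $j$ with the quotient becomes the inclusion $A+I \hookrightarrow C^*_{max}(A)$. Since this inclusion is completely isometric, $J$ is a boundary ideal; and since $C^*_{max}(A)$ is already the C*-envelope of $A+I$ by Proposition \ref{prop:shilov_extension}, the correspondence in Proposition \ref{Prop:Equi Lattice Ideals} forces $J$ to be the Shilov ideal.
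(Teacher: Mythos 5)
Your proposal is correct and follows essentially the same route as the paper: the same matrix-norm computation for complete isometry, the same use of Theorem \ref{ucc Decom} plus the universal property of $C^*_{\max}(A)$ to extend an arbitrary u.c.c.\ homomorphism of $A+I$ to $\mathcal{A}$, and the same identification of the Shilov ideal as the kernel of the first-coordinate projection via Propositions \ref{prop:shilov_extension} and \ref{Prop:Equi Lattice Ideals}. The only (cosmetic) differences are that you realize $\mathcal{A}$ as the pullback $\{(x,y)\mid x-y\in I\}$, which gives norm-closedness for free where the paper invokes Lemma \ref{lem:isometric_quotient_closed}, and that you assemble the two extended pieces into the single homomorphism $(x,y)\mapsto\tilde\rho_1(y)\oplus\tilde\rho_2(x)$ rather than treating the two summands separately.
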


\begin{proof}
    We start by showing that $\mathcal{A}$ is indeed a $C^*$-algebra. The only thing that might not be clear is the completeness. For this, we will show that $\mathcal{A}$ is closed in $C^*_{max}(A)\oplus C^*_{max}(A)$. First note that 
      \[
        \tilde{\mathcal{A}}=\{(a,a) |\ a\in C^*_{max}(A)\}
      \]
    is a $C^*$-subalgebra of $C^*_{max}(A)\oplus C^*_{max}(A)$, and that $0\oplus I$ is an ideal in $C^*_{max}(A)\oplus C^*_{max}(A)$. Observe that the quotient map by $0\oplus I$ is isometric on $\tilde{\mathcal{A}}$ since
      \[
        \|(a,a+i)\|=\max\{\|a\|,\|a+i\|\}\ge\|a\|.
      \]
    By Lemma \ref{lem:isometric_quotient_closed}, we obtain that $\tilde{\mathcal{A}}+0\oplus I$ is closed. Since
      \[
        (a+i,a+j)=(a+i,a+i)+(0,j-i),
      \]
    we see that $\mathcal{A}=\tilde{\mathcal{A}}+0\oplus I$, and so $\mathcal{A}$ is closed.
    
    Next, notice that since $A\cap I=\{0\}$, every element in $x\in A+I$ has a unique decomposition $x=a+i$ for a $a\in A$, $i\in I$. Thus the map 
      \begin{equation*}
        j:A+I\to \mathcal{A}, a+i\mapsto (a+i,a)
      \end{equation*}
    is well-defined. It is clear that $j$ is a unital homomorphism and $C^*(j(A+I))=\mathcal{A}$. So, to see that $(\mathcal{A},j)$ is a $C^*$-cover of $A+I$, it remains to show that $j$ is completely isometric. Given a matrix $M=(a_{k,l}+i_{k,l})_{1\le k,l\le n}$, we have that
      \[
        \|M\|\ge\|(a_{k,l})_{1\le k,l\le n}\|.
      \]
   Hence
      \[
        \|j(M)\|=\max\{\|M\|,\|(a_{k,l})_{1\le k,l\le n}\|\}=\|M\|,
      \]
    and hence $j$ is completely isometric.
    
    Finally we have to show that $(\mathcal{A},j)$ is the maximal $C^*$-cover of $A+I$. By Lemma \ref{lem:uni property}, it is enough to show that every u.c.c. homomorphism on $j(A+I)$ extends to a $*$-homomorphism on $\mathcal{A}$. So let $\pi$ be a u.c.c. homomorphism on $j(A+I)$.
    
    First observe that since $j(I)=I\oplus 0$ is an ideal in $\mathcal{A}$, Theorem \ref{ucc Decom} implies that $\pi$ decomposes into $\pi_1\oplus \pi_2$ such that $\pi_1(I\oplus 0)=0$ and $\pi_2$ extends to a $*$-homomorphism on $\mathcal{A}$. Thus we only have to show that $\pi_1$ extends to a $*$-homomorphism on $\mathcal{A}$.

    Define a u.c.c. homomorphism on $A$ by
      \[
        a\mapsto \pi_1((a,a)).
      \]
    By the universal property of $C^*_{max}(A)$, this u.c.c. homomorphism has an extension to a unital $*$-homomorphism $\Pi_1$. Define a unital $*$-homomorphism $\rho$ on $\mathcal{A}$ by 
    \[
      (a+i,a+j)\mapsto \Pi_1(a+j).
    \]
    Since
   \[
     \rho((a+i,a))=\Pi_1(a)=\pi_1((a,a))=\pi_1((a+i,a))
   \]
    we see that $\rho$ extends $\pi_1$, and so $(\mathcal{A},j)$ is the maximal $C^*$-cover of $A+I$.
    
    Finally, let $p:\mathcal{A}\to C^*_{max}(A), (a+i,a+j)\mapsto a+i$. Then $(C^*_{max}(A),p|_{A+I})$ is the $C^*$-envelope of $A+I$ by Proposition \ref{prop:shilov_extension}  and thus by Proposition \ref{Prop:Equi Lattice Ideals} the Shilov ideal is given by
    \[
      \ker(p)=0\oplus I.\qedhere
    \]
\end{proof}

This characterization of $C^*_{max}(A+I)$ gives rise to a new construction of a non-selfadjoint operator algebra with only one $C^*$-cover out of an arbitrary non-selfadjoint operator algebra. In Remark \ref{Remark: KW=A+I}, we will show that this new operator algebra coincides with the one from Theorem \ref{KW construction}, which makes the following proof redundant, however, the proof is different from the one given for Theorem \ref{KW construction} and yields more insight on why the constructed operator algebra has only one $C^*$-cover.
      
\begin{theorem}\label{Thm:A+c(I)}
    Let $A$ be a non-selfadjoint operator algebra seen as a subalgebra of the maximal $C^*$-cover $(C^*_{max}(A),i_{max})$ and let $I$ be the Shilov ideal of $A$. Denote by $c_0(I)$ the $c_0$-sum of $(I)_1^\infty$ and embed $A$ into the $\ell^\infty$-sum of countably many copies of $C^*_{max}(A)$ via
      \begin{equation*}
          i(a)=(a)_{n=1}^\infty.
      \end{equation*}
    Then 
      \begin{equation*}
         i(A)+c_0(I)
      \end{equation*}
    is a non-selfadjoint operator algebra with a one point lattice given by $i(C^*_{max}(A))+c_0(I)$ and the identity embedding.
\end{theorem}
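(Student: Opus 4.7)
The plan is to show that $\mathcal{B} := i(C^*_{max}(A)) + c_0(I)$ is both the maximal $C^*$-cover and the $C^*$-envelope of $B := i(A) + c_0(I)$, so that by Proposition \ref{Prop:Equi Lattice Ideals} the lattice $\cstarlattice(B)$ consists of a single point.

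The first step is to set up the basic structure. The subspace $c_0(I)$ is a norm-closed two-sided $*$-ideal of the ambient $\ell^\infty$-sum, and a constant sequence lies in $c_0$ only if it is zero, so $i(C^*_{max}(A)) \cap c_0(I) = \{0\}$. Hence the quotient map modulo $c_0(I)$ is isometric on $i(C^*_{max}(A))$, and Lemma \ref{lem:isometric_quotient_closed} forces both $\mathcal{B}$ and $B$ to be closed. This same quotient identifies $\mathcal{B}/c_0(I) \cong C^*_{max}(A)$ and restricts to a completely isometric isomorphism $B/c_0(I) \cong A$; since $C^*(B)$ contains $i(A)$, $i(A^*)$, and $c_0(I)$, it contains $i(C^*_{max}(A))$ and therefore equals $\mathcal{B}$, so $(\mathcal{B},\id)$ is a $C^*$-cover. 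If $B$ were selfadjoint it would be a $C^*$-algebra, forcing $A \cong B/c_0(I)$ to also be a $C^*$-algebra, contrary to hypothesis.

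Next, to verify $(\mathcal{B},\id)$ is maximal via Lemma \ref{lem:uni property}, let $\pi:B \to \mathcal{B}(H)$ be u.c.c. Since $c_0(I) \triangleleft B$ is a $*$-ideal, the decomposition argument in Theorem \ref{ucc Decom} yields $\pi = \pi_1\oplus\pi_2$ with $\pi_1|_{c_0(I)}$ non-degenerate and $\pi_2$ annihilating $c_0(I)$. By \cite[Lemma 1.9.14]{Dav}, $\pi_1$ extends to a $*$-homomorphism on $C^*(B) = \mathcal{B}$. The summand $\pi_2$ factors through $B/c_0(I) \cong A$, so by the universal property of $C^*_{max}(A)$ the factor extends to a $*$-homomorphism on $C^*_{max}(A) \cong \mathcal{B}/c_0(I)$, which precomposed with the quotient $\mathcal{B} \to \mathcal{B}/c_0(I)$ extends $\pi_2$ to $\mathcal{B}$.

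Finally, to see $(\mathcal{B},\id)$ is also the $C^*$-envelope, by Lemma \ref{lem:empty Shilov} I need to rule out nonzero boundary ideals. Suppose $J \triangleleft \mathcal{B}$ has quotient completely isometric on $B$. Then $J \cap B = 0$, in particular $J \cap c_0(I) = 0$, which combined with the fact that $c_0(I)$ is a two-sided $*$-ideal of $\mathcal{B}$ gives $J \cdot c_0(I) \subseteq J \cap c_0(I) = 0$. Writing a general $x = i(b) + (k_n) \in J$ and multiplying on both sides by basis elements $j\delta_m$ with $j \in I$ forces $b+k_m \in \mathrm{Ann}(I) \triangleleft C^*_{max}(A)$ for every $m$; letting $m \to \infty$ gives $b \in \mathrm{Ann}(I)$ and hence each $k_m \in I \cap \mathrm{Ann}(I) = \{0\}$. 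So $J \subseteq i(\mathrm{Ann}(I))$, and transferring through $i$ identifies $J$ with an ideal $\tilde J \triangleleft C^*_{max}(A)$ contained in $\mathrm{Ann}(I)$. Because $J$ sits inside $i(C^*_{max}(A))$, the restriction of the quotient $\mathcal{B} \to \mathcal{B}/J$ to this subalgebra is an injective $*$-homomorphism onto its image and hence completely isometric; combined with the boundary-ideal property on $i(A)$, this shows $C^*_{max}(A)/\tilde J$ still contains $A$ completely isometrically, so $\tilde J$ is a boundary ideal for $A$ in $C^*_{max}(A)$. By the maximality of the Shilov ideal $\tilde J \subseteq I$, and then $\tilde J \subseteq I \cap \mathrm{Ann}(I) = \{0\}$, so $J = 0$. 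I expect the main difficulty to be in this last paragraph: the element-level annihilator calculation pinning $J$ inside $i(\mathrm{Ann}(I))$, and the careful bookkeeping needed to promote $\tilde J$ to a bona fide boundary ideal for $A$ in $C^*_{max}(A)$ so as to invoke the Shilov property.
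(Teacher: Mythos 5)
Your proposal is correct, and while the overall strategy is the same as the paper's (show that $\mathcal{B}=i(C^*_{max}(A))+c_0(I)$ is simultaneously the maximal C*-cover and the C*-envelope of $B=i(A)+c_0(I)$), the C*-envelope step is executed by a genuinely different argument. For maximality your route and the paper's coincide in substance: both decompose a u.c.c.\ homomorphism via Theorem \ref{ucc Decom} into a part that is non-degenerate on $c_0(I)$ (extended by \cite[Lemma 1.9.14]{Dav}) and a part annihilating $c_0(I)$; the paper extends the latter by exhibiting a pointwise Cauchy sequence $\rho_n(i(a)+(i_k))=\Pi_1(a+i_n)$, whereas you simply factor it through $B/c_0(I)\cong A$ and pull back along $\mathcal{B}\to\mathcal{B}/c_0(I)\cong C^*_{max}(A)$ --- the same extension, obtained more directly. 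For the envelope, the paper considers the closed ideal $c_0(I)+J$, extracts a nonzero $a\in I$ with $i(a)\in c_0(I)+J$, and reaches a contradiction using an approximate identity of $I$ together with the $c_0$-decay; you instead observe $J\cdot c_0(I)\subseteq J\cap c_0(I)=0$, test against the elements $j\delta_m$ to pin $J$ inside $i(\mathrm{Ann}(I))$, and then conclude via $\tilde J\subseteq I\cap\mathrm{Ann}(I)=\{0\}$ after checking $\tilde J$ is a boundary ideal for $A$. Both arguments ultimately invoke the Shilov property of $I$ in $C^*_{max}(A)$, but your annihilator computation is cleaner and avoids the approximate-identity limit; you also explicitly verify non-selfadjointness of $B$, which the paper's proof leaves implicit.

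One phrase needs repair: you say that the restriction of $q_J:\mathcal{B}\to\mathcal{B}/J$ to $i(C^*_{max}(A))$ is ``an injective $*$-homomorphism onto its image,'' but since $J\subseteq i(C^*_{max}(A))$ that restriction has kernel $J$ and is injective only when $J=0$. What you need (and what makes the rest of the paragraph go through) is the standard fact that the induced map $i(C^*_{max}(A))/J\to\mathcal{B}/J$ is an injective, hence completely isometric, $*$-homomorphism; this identifies the image of $i(C^*_{max}(A))$ in $\mathcal{B}/J$ with $C^*_{max}(A)/\tilde J$, so that for $a\in M_n(A)$ one gets $\|a+M_n(\tilde J)\|=\|q_J(i(a))\|=\|i(a)\|=\|a\|$, confirming that $\tilde J$ is a boundary ideal for $A$. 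With that correction the argument is complete.
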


\begin{proof}
 That $i(C^*_{max}(A))+c_0(I)$ is a $C^*$-algebra follows from the observation that $c_0(I)$ is an ideal in $l^\infty(C^*_{max}(A)$, the quotient map by this ideal is isometric on $i(A)$, and Lemma \ref{lem:isometric_quotient_closed}. Analogously we obtain that $i(A)+c_0(I)$ is closed, so $i(A)+c_0(I)$ is indeed an operator algebra. Furthermore, it is straightforward to see that $C^*(i(A)+c_0(I))=i(C^*_{max}(A))+c_0(I)$, so $i(C^*_{max}(A))+c_0(I)$ with the identity embedding is indeed a $C^*$-cover of $i(A)+c_0(I)$.
 
 Next we have to show that, up to equivalence, this $C^*$-cover is unique. Let $J$ be the Shilov ideal with respect to this $C^*$-cover and $x\in J$. Then by Lemma \ref{lem:isometric_quotient_closed}, $c_0(I)+J$ is closed, and clearly an ideal. If $c_0(I)+J$ is strictly bigger than $c_0(I)$, there would exist a $0\neq a\in C^*_{max}(A)$ such that $i(a)\in c_0(I)+J$. Let $\tilde J$ be the ideal generated by $a$ in $C^*_{max}(A)$. Then $i(\tilde J)\subset c_0(I)+J$, and the quotient map by $\tilde J$ on $C^*_{max}(A)$ would be completely isometric on $A$. Hence $\tilde J \subset I$ and, in particular, $a\in I$. By assumption, there is a $b\in c_0(I)$ such that $i(a)+b\in J$. Let $(b_\lambda)$ be an approximate identity for $I$. Let $(b^{(n)}_\lambda)$ be the element in $c_0(I)$ that has $b_\lambda$ at its $n$'th component and $0$ elsewhere. Then
  \[
    (i(a)+b)b^{(n)}_\lambda=0
  \]
since it is in $c_0(I)\cap J$. But since $(bb^{(n)}_\lambda)$ converges to the element that has the $n$th entry of $b$ at its $n$th component and $0$ elsewhere, and $(i(a)b^{(n)}_\lambda)$ converges to the element that has its $n$th component of $i(a)$ at its $n$th component and $0$ elsewhere, we conclude that $i(a)=-b$. But $i(a)=(a)_1^\infty$ and $b\in c_0(I)$, so this is only possible when $a=0$, contradicting the assumption $a\neq0$ and implying $J\subset c_0(I)$. Since $c_0(I)\subset i(A)+c_0(I)$, we conclude that $J=\{0\}$. Therefore $i(C^*_{max}(A))+c_0(I)$ is the C*-envelope.

It remains to show that the maximal $C^*$-cover is given by $i(C^*_{max}(A))+c_0(I)$. For this it is enough to show that every u.c.c. homomorphism on $i(A)+c_0(I)$ extends to a unital $*$-homomorphism on $C^*(i(A)+c_0(I))$ by Lemma \ref{lem:uni property}. By Theorem \ref{ucc Decom}, we have that every u.c.c. homomorphism $\pi$ on $i(A)+c_0(I)$ decomposes into $\pi_1\oplus \pi_2$ such that $\pi_1(c_0(I))=0$ and $\pi_2$ extends to a unital $*$-homomorphism on $C^*(i(A)+c_0(I))$. So we only have to show that $\pi_1$ extends to a unital $*$-homomorphism.

Since $i$ is a injective unital $*$-homomorphism, we have that $(i(C^*_{max}(A),i\circ i_{max})$ is again a maximal $C^*$-cover of $A$. Thus by the universal property of the maximal $C^*$-cover, $\pi_1|_{i(A)}$ extends to a unital $*$-homomorphism $\Pi_1$ on $i(C^*_{max}(A))$. Define unital $*$-homomorphisms on $i(C^*_{max}(A))+c_0(I)$ by
  \[
    \rho_n(i(a)+(i_k))=\Pi_1(a+i_n).
  \]
We are done if we show that $(\rho_n)_n$ is a pointwise Cauchy sequence with respect to the operator norm. For all $a\in C^*_{max}(A), (i_k)\in c_0(I)$, it holds  that
  \[
    \|\rho_n(i(a)+(i_k))-\rho_m(i(a)-(i_k))\|=\|\Pi_1(i_n-i_m)\|\le\|i_n-i_m\|,
  \]
and since $(i_k)\in c_0(I)$, for every $\epsilon>0$ there exists a $N\in\mathbb{N}$ such that $\|i_n-i_m\|<\epsilon$ for all $n,m\ge N$, completing the proof.
\end{proof}

\begin{remark}\label{Remark: KW=A+I}
Thus far, we have seen two general constructions for a non-selfadjoint operator algebra with one $C^*$-cover, Theorem \ref{KW construction} and Theorem \ref{Thm:A+c(I)}. Of course, this raises the question of what the relation between these two is. It turns out that they are the same in the sense that there exists a surjective u.c.i. homomorphism between these two. To see this let $A_0$ be a non-selfadjoint operator algebra and let $(A_n)_n$ be the sequence of operator algebras from Theorem \ref{KW construction} with the u.c.i. homomorphisms $i_n:A_n\to A_{n+1}$, and let $A=i(A_0)+c_0(I)$ be the operator algebra from Theorem \ref{Thm:A+c(I)}. To show that $A$ coincides with the inductive limit of $(A_n)_n$, it suffices to show by Proposition \ref{Prop:Direct systems} that there exist u.c.i. homomorphisms $\lambda_n:A_n\to A$ such that
  \begin{enumerate}
      \item $\lambda_n=\lambda_{n+1}i_n$,
      \item $A=\overline{\bigcup_{n=0}^\infty \lambda_n(A_n)}.$
  \end{enumerate}
First define $\lambda_0$ by
  \[
    \lambda_0(a)=i(a)=(a,a,a,\dots).
  \]
By Theorem \ref{Cmax_A+I}, we can identify $A_1$ with 
  \[
    \{(a+i,a) \ | \ a\in A_0, i\in I\},
  \]
and the maximal $C^*$-cover of $A_1$ with
  \[
    \{(a+i,a+j) \ | \ a\in C^*_{max}(A_0), i, j\in I\}.
  \]
Then the Shilov ideal is given by $0\oplus I$. Recursively, we obtain that $A_n$ can be identified with
  \[
    \{(a+i_1,\dots,a+i_n,a)\ | \ a\in A_0, i_1,\dots,i_{n}\in I\},
  \]
and the maximal $C^*$-cover of $A_n$ with
  \[
    \{(a+i_1,\dots,a+i_n,a+i_{n+1})\ | \ a\in C^*_{max}(A_0), i_1,\dots,i_{n+1}\in I\}
  \]
with Shilov ideal $0\oplus\dots0\oplus I$. Define $\lambda_n$ by
  \[
    \lambda_n((a+i_1,\dots,a+i_{n},a))=(a+i_1,\dots,a+i_{n},a,a,a,\dots).
  \]
Noting that with the above identifications, $i_n$ is given by
  \[
    i_n((a+i_1,\dots,a+i_n,a))=(a+i_1,\dots,a+i_n,a,a),
  \]
it is easy to verify that the $\lambda_n$ fulfill $(i)$ and $(ii)$. Thus we obtain the surjective u.c.i. homomorphism by Proposition \ref{Prop:Direct systems} 
\end{remark}

We will finish this chapter with another example of an operator algebra with only one $C^*$-cover for which the relation to Theorem \ref{KW construction} is not yet clear.

Let $C^*(1,x)$ be the unital universal C*-algebra generated by a contraction $x$. For the rest of the chapter denote by $A$ the operator algebra generated by $1,x,(x^2)^*,(x^3)^*$. We will show that the C*-envelope and the maximal C*-algebra of $A$ is given by $C^*(1,x)$ with the identity embedding, and that $A$ is non-selfadjoint. One should note that $A$ is RFD, since $C^*(1,x)$ is a RFD C*-algebra.

\begin{remark}\label{rem:uniquely_determined}
Let $\pi:A\to \mathcal{B}(H)$ be a u.c.c. homomorphism. Because $\pi$ extends to a completely positive map on $A+A^\ast$, it is true that if $a\in A$ also satisfies $a^*\in A$, then $\pi(a^*)=\pi(a)^*$. Thus $\pi((x^2)^*)=\pi(x^2)^*=(\pi(x)^2)^*$ and $\pi((x^3)^*)=(\pi(x)^3)^*$. In particular, every u.c.c. homomorphism on $A$ is uniquely determined by its value on $x$. Consequently, for every contraction $T\in \B(H)$, there exists exactly one u.c.c. homomorphism $\pi:A\to \B(H)$ such that $\pi(x)=T$.
\end{remark}

\begin{theorem}\label{thm:invertible_maximal}
Let $T\in\mathcal{B}(H)$ be an invertible contraction and let $\pi:C^*(x)\to\mathcal{B}(H)$ be the unital *-homomorphism with $\pi(x)=T$. Then $\pi|_A$ is maximal.
\end{theorem}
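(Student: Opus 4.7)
The plan is to take an arbitrary dilation $\Phi:A\to\mathcal{B}(K)$ of $\pi|_A$, with $H\subseteq K$, and show that $H$ reduces $\Phi(A)$. By Remark \ref{rem:uniquely_determined}, $\Phi$ is uniquely determined by the contraction $S:=\Phi(x)\in\mathcal{B}(K)$, and in fact coincides with the restriction to $A$ of the unital $*$-representation $\sigma:C^*(x)\to\mathcal{B}(K)$ sending $x\mapsto S$. Writing $S=\begin{pmatrix}T&B\\C&D\end{pmatrix}$ with respect to $K=H\oplus H^\perp$, it suffices to show $B=0$ and $C=0$, since then $S$ and $S^*$ are block diagonal, hence so is $\sigma(C^*(x))\supseteq\Phi(A)$, and $H$ is reducing.

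The first step is to extract algebraic relations by testing the identity $P_H\Phi(a)|_H=\pi(a)$ on a few elements of $A$. From $a=x^2\in A$ one reads off $BC=0$. From $a=x^2(x^2)^*\in A$ and $a=(x^2)^*x^2\in A$, the $(1,1)$ blocks of $S^2(S^2)^*$ and $(S^2)^*S^2$ must equal $T^2 T^{*2}$ and $T^{*2}T^2$ respectively, which forces the off-diagonal blocks of $S^2$ to vanish; that is, $TB+BD=0$ and $CT+DC=0$. Consequently $S^2$, and hence $S^{*2}$, is block diagonal with diagonal blocks $T^2$ (resp. $T^{*2}$) and $E:=CB+D^2$ (resp. $E^*$).

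To eliminate $C$, I would apply the dilation identity to the mixed monomial $a=(x^3)^*x(x^2)^*\in A$. Using the block diagonality of $S^{\pm 2}$ and a routine expansion, the $(1,1)$ block of $S^{*3}SS^{*2}$ evaluates to $T^{*3}TT^{*2}+T^{*2}C^*CT^{*2}$, so equating with $\pi(a)=T^{*3}TT^{*2}$ gives $T^{*2}C^*CT^{*2}=0$. Since $T$ is invertible, $T^{*2}$ is invertible, and cancellation yields $C^*C=0$, hence $C=0$. A parallel argument isolates $B$ via $a=x(x^3)^*\in A$: with $C=0$ substituted in, the $(1,1)$ block of $SS^{*3}$ reduces to $TT^{*3}+BD^{*2}B^*$, forcing $BD^{*2}B^*=0$. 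Iterating the adjointed relation $D^*B^*=-B^*T^*$ twice produces $D^{*2}B^*=B^*T^{*2}$, which collapses the constraint to $BB^*T^{*2}=0$, and invertibility of $T^{*2}$ then forces $B=0$.

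The critical choice, and also the main obstacle, is the selection of test elements. Because $A$ contains $x$ and the adjoints $(x^2)^*$, $(x^3)^*$ but not $x^*$ itself, purely symmetric monomials $x^n$ and $(x^n)^*$ only give information about the $(1,1)$ block of powers of $S$ and $S^*$, and cannot by themselves separate $B$ from $C$. The two asymmetric mixed monomials $(x^3)^*x(x^2)^*$ and $x(x^3)^*$ (which lie in $A$ precisely because $2$ and $3$ are available as adjoint exponents) are exactly what is needed to produce equations in which invertibility of $T$ cleanly eliminates one off-diagonal block at a time. Once $B=C=0$, $S$ is block diagonal, and the conclusion follows.
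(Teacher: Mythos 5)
Your proof is correct, and the computations check out: $BC=0$ from $x^2$; $TB+BD=0$ and $CT+DC=0$ from $x^2(x^2)^*$ and $(x^2)^*x^2$ (each gives a term $XX^*$ or $Y^*Y$ in the $(1,1)$ corner that must vanish); the $(1,1)$ block of $S^{*3}SS^{*2}$ is indeed $T^{*3}TT^{*2}+T^{*2}C^*CT^{*2}$, killing $C$; and the relation $D^{*2}B^*=B^*T^{*2}$ correctly collapses $BD^{*2}B^*=BB^*T^{*2}$, killing $B$. However, your route is genuinely different from the paper's. The paper first invokes Sarason's Lemma to replace the $2\times 2$ decomposition $K=H\oplus H^\perp$ with a $3\times 3$ semi-invariant decomposition $K=M\oplus H\oplus(K\ominus L)$ in which $\Pi(x)$ is upper triangular; it then observes that the \emph{self-adjoint} subalgebra $C^*(1,x^2,x^3)\subseteq A$ forces $M$ and $L$ to be reducing, so $\Pi(x^2)$ and $\Pi(x^3)$ are block diagonal, and the two factorizations $\Pi(x^3)=\Pi(x)\Pi(x^2)=\Pi(x^2)\Pi(x)$ immediately give $T_{1,2}T^2=0$ and $T^2T_{2,3}=0$, whence $T_{1,2}=T_{2,3}=0$ by invertibility. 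That argument is shorter and makes the role of $C^*(1,x^2,x^3)$ conceptually transparent, but it leans on Sarason's Lemma and on the invariance-implies-reducing principle for self-adjoint sets. Your argument dispenses with Sarason entirely and works with the raw, non-triangular $2\times 2$ block form, paying for it with a longer sequence of test elements and the two-stage elimination of $C$ and then $B$; the asymmetric monomials $(x^3)^*x(x^2)^*$ and $x(x^3)^*$ play the role that $x^3=x\cdot x^2=x^2\cdot x$ plays in the paper. Both proofs share the same implicit (and standard, given Remark \ref{rem:uniquely_determined}) reduction to dilations that are homomorphisms, hence restrictions of $\ast$-representations of $C^*(x)$, so no points lost there.
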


\begin{proof}
Let $\Pi:C^*(x)\to\mathcal{B}(K)$ be a unital *-homomorpism with $H\subset K$ and $(P_H\Pi|_H)|_A=\pi|_A$. By Sarason's Lemma \cite{Sarason} we have that $H$ semi-invariant for $\Pi$. Therefore there are closed subspaces $M\subseteq L\subseteq K$ with $L\ominus M=H$ such that $M$ and $L$ are invariant for $\Pi|_A$. Since the C*-algebra $C^*(1,x^2,x^3)$ is contained in $A$, $M$ and $L$ already have to reduce $\Pi|_{C^*(1,x^2,x^3)}$. Let us write $\Pi(x)$ in matrix form 
  \begin{equation*}
    \Pi(x)=
      \begin{pmatrix}
      T_{1,1} & T_{1,2} & T_{1,3} \\
      0 & T & T_{2,3}\\
      0 & 0 & T_{3,3}
    \end{pmatrix}
  \end{equation*}
with respect to the orthogonal decomposition $K=M\oplus H \oplus (K\ominus L)$. 

Now we can calculate $\Pi(x^3)$ in three different ways. On the one hand
  \begin{equation*}
    \Pi(x^3)=
      \begin{pmatrix}
      T_{1,1}^3 & 0 & 0\\
      0 & T^3 & 0\\
      0 & 0 & T_{3,3}^3
    \end{pmatrix},
  \end{equation*}
since $x^3\in C^*(1,x^2,x^3)$ and hence $M, L$ are reducing. On the other hand
  \begin{equation*}
    \Pi(x^3)=\Pi(x)\Pi(x^2)=
      \begin{pmatrix}
      T_{1,1} & T_{1,2} & T_{1,3} \\
      0 & T & T_{2,3}\\
      0 & 0 & T_{3,3}
    \end{pmatrix}
    \begin{pmatrix}
      T_{1,1}^2 & 0 & 0 \\
      0 & T^2 & 0\\
      0 & 0 & T_{3,3}^2
    \end{pmatrix}
  =
    \begin{pmatrix}
      T_{1,1}^3 & T_{1,2}T^2 & T_{1,3}T_{3,3}^2 \\
      0 & T^3 & T_{2,3}T_{3,3}^2\\
      0 & 0 & T_{3,3}^3
    \end{pmatrix}
  \end{equation*}
where we used that $x^2\in C^*(1,x^2,x^3)$. So $T_{1,2}T^2=0$ and since $T$ is invertible by assumption , also $T_{1,2}=0$. Likewise, we receive from
  \begin{equation*}
    \Pi(x^3)=\Pi(x^2)\Pi(x)=
      \begin{pmatrix}
       T_{1,1}^2 & 0 & 0 \\
       0 & T^2 & 0\\
       0 & 0 & T_{3,3}^2
    \end{pmatrix}
    \begin{pmatrix}
      T_{1,1} & T_{1,2} & T_{1,3} \\
      0 & T & T_{2,3}\\
      0 & 0 & T_{3,3}
    \end{pmatrix}
  =
    \begin{pmatrix}
      T_{1,1}^3 & T_{1,1}^2T_{1,2} & T_{1,1}^2T_{1,3} \\
      0 & T^3 & T^2T_{2,3}\\
      0 & 0 & T_{3,3}^3
    \end{pmatrix}
  \end{equation*}
that $T_{2,3}=0$. This shows that $H$ is reducing for $\Pi$ and hence $\pi$ is maximal.
\end{proof}

\begin{theorem}\label{thm:onepointlattice}
The operator algebra $A$ is non-selfadjoint and has only, up to equivalence, one $C^*$-cover.
\end{theorem}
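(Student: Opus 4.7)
Since the statement is that $A$ has a unique $C^*$-cover up to equivalence and is non-selfadjoint, the plan is to show that $(C^*(1,x), \mathrm{id})$ is both the maximal $C^*$-cover and the $C^*$-envelope of $A$, and then verify non-selfadjointness via a nilpotent representation. That these two coincide in $\cstarlattice(A)$ forces the lattice to a single point.

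For the maximal $C^*$-cover: by the universal property defining $C^*(1,x)$, every contraction $T$ on a Hilbert space $H$ induces a unital $*$-homomorphism $\rho_T: C^*(1,x) \to \mathcal{B}(H)$ with $\rho_T(x) = T$. Given any u.c.c. homomorphism $\pi:A\to\mathcal{B}(H)$, Remark \ref{rem:uniquely_determined} yields $\pi = \rho_T|_A$ with $T := \pi(x)$, so $\rho_T$ extends $\pi$, and Lemma \ref{lem:uni property} identifies $(C^*(1,x),\mathrm{id})$ as maximal.

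For the $C^*$-envelope: I would show the Shilov ideal $J \subseteq C^*(1,x)$ of $A$ vanishes. Lemma \ref{lem:intersection shilov ideal} together with Theorem \ref{thm:invertible_maximal} gives
\[
J \subseteq \bigcap \{\ker(\rho_T) : T \text{ an invertible contraction}\},
\]
so it suffices to verify this intersection is trivial. Fix $0 \neq a \in C^*(1,x)$. The RFD property of $C^*(1,x)$ (noted in the text preceding the theorem) supplies a finite-dimensional $*$-representation $\phi: C^*(1,x) \to M_n$ with $\phi(a) \neq 0$. Writing the singular value decomposition $\phi(x) = U \Sigma V^*$ with $\Sigma = \mathrm{diag}(\sigma_1,\ldots,\sigma_n)$ and $\sigma_i \in [0,1]$, define $T_\epsilon := U \Sigma_\epsilon V^*$ for $0 < \epsilon < 1$, where $\Sigma_\epsilon$ replaces each $\sigma_i$ by $\max(\sigma_i, \epsilon)$. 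Each $T_\epsilon$ is an invertible contraction in $M_n$ and converges in norm to $\phi(x)$, so a routine density argument (first for $*$-polynomials in $x, x^*$ by continuity of evaluation, then extended by contractivity of the $\rho$-maps) yields $\rho_{T_\epsilon}(a) \to \phi(a) \neq 0$ in norm. Hence $a \notin \ker(\rho_{T_\epsilon})$ for small $\epsilon$, completing the argument.

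For non-selfadjointness: take the nilpotent contraction $T := \bigl(\begin{smallmatrix} 0 & 1 \\ 0 & 0 \end{smallmatrix}\bigr) \in M_2$, satisfying $T^2 = T^3 = 0$. By Remark \ref{rem:uniquely_determined} the u.c.c. homomorphism $\pi_T:A\to M_2$ satisfies $\pi_T((x^k)^*) = (T^k)^* = 0$ for $k = 2,3$, so $\pi_T(A) = \mathrm{span}\{I,T\}$, which is strictly upper triangular and not self-adjoint. Were $A$ itself a $C^*$-algebra, then $\pi_T$, as a unital completely contractive homomorphism on a $C^*$-algebra, would be a $*$-homomorphism, forcing $\pi_T(A)$ to be self-adjoint -- a contradiction. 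The main obstacle is the middle step: turning the vanishing of the Shilov ideal into a perturbation problem for contractions. Theorem \ref{thm:invertible_maximal} reduces the question to invertible $T$, and the RFD property makes the finite-dimensional SVD trick available; once those are in place the argument closes cleanly.
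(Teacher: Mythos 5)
Your proof is correct and rests on the same pillars as the paper's: Remark \ref{rem:uniquely_determined} together with the universal property identifies $(C^*(1,x),\mathrm{id})$ as $C^*_{\max}(A)$, Theorem \ref{thm:invertible_maximal} supplies maximality for invertible contractions, and the RFD property of $C^*(1,x)$ closes the argument. The one place you genuinely diverge is the limiting step. The paper fixes $n$, notes that the homomorphisms $x\mapsto T$ with $T$ an invertible contraction are point-$\ast$-SOT dense among all $\ast$-homomorphisms $C^*(x)\to M_n$, and invokes Lemma \ref{lem:maximal_SOT_limit} to conclude that \emph{every} finite-dimensional representation factors through the envelope map $\sigma$, whence $\sigma$ is isometric because the finite-dimensional representations norm $C^*(x)$. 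You instead attack the Shilov ideal directly via Lemma \ref{lem:intersection shilov ideal}, showing the intersection of kernels over invertible contractions is trivial by an SVD perturbation of a separating finite-dimensional representation; since everything happens in $M_n$, norm convergence of $\rho_{T_\epsilon}(a)\to\phi(a)$ (first on $\ast$-polynomials, then by the uniform contractivity of the $\rho$-maps) replaces the SOT machinery, and Lemma \ref{lem:maximal_SOT_limit} is not needed. Both routes are sound and of comparable length; yours is slightly more self-contained, while the paper's packages the limit argument into a reusable lemma. Your non-selfadjointness argument (the range of $\pi_T$ for the nilpotent $T$ is $\operatorname{span}\{I,T\}$, which is not selfadjoint, whereas a unital completely contractive homomorphism of a C*-algebra is automatically a $\ast$-homomorphism) is also valid and is a mild variant of the paper's, which instead exhibits the non-maximal representation $x\mapsto 0$ dilated by $x\mapsto T$.
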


\begin{proof}
It is clear that $\Cmax(A)$ is given by $C^*(x)$, since every contractive operator $T\in\B(H)$ defines a u.c.c. homomorphism $\pi:A\to\B(H), x\mapsto T$. By Remark \ref{rem:uniquely_determined} every u.c.c. homomorphism $\pi:A\to\B(H)$ is uniquely determined by the contraction $\pi(x)$, and so extends uniquely to a $\ast$-homomorphism of the universal C*-algebra $C^\ast(x)$.

Let $\iota:A\to \Ce(A)$ be the embedding of $A$ into its C*-envelope, and let $\sigma:C^\ast(A)\to \Ce(A)$ be the unique morphism of C*-covers of $A$ satisfying $\sigma\vert_A=\iota$. Any invertible contraction $T\in M_n$ determines a maximal representation of $A$ by Theorem \ref{thm:invertible_maximal}, and so the $\ast$-homomorphism $C^\ast(x)\to M_n$, $x\mapsto T$ factors through $\sigma$. Since the invertible contractions are dense in the set of all contractions in $M_n$, such homomorphisms $C^\ast(x)\to M_n$ are point *-SOT dense in all homomorphisms $C^\ast(x)\to M_n$. By Lemma \ref{lem:maximal_SOT_limit}, it follows that \emph{every} $\ast$-homomorphism $C^\ast(x)\to M_n$ factors through $\sigma$. Because $C^\ast(x)$ is RFD by \cite[Theorem 5.1]{CourtneySherman}, the set of finite dimensional representations norms $C^\ast(x)$ and so $\sigma$ is isometric.

It remains to show that $A$ is non-selfadjoint. For this it is enough to show that there is at least one u.c.c. homomorphism on $A$ that is not maximal. Consider the following u.c.c. homomorphisms $\pi:A\to\mathbb{C}, x\mapsto 0$ and
  \begin{equation*}
    \psi:A\to M_2, \quad x\mapsto \begin{pmatrix} 0 & 1\\ 0 & 0\end{pmatrix}.
  \end{equation*}
Because $\psi((x^*)^2)=\psi((x^*)^3)=0$, we see that $\mathbb{C}\oplus 0$ is an invariant subspace for $\psi$, and the compression of $\psi$ onto $\mathbb{C}\oplus0\cong\mathbb{C}$ is $\pi$.  This shows that $\pi$ is not maximal and hence $A$ is non-selfadjoint.
\end{proof}

The class of non-selfadjoint operator algebras with only one C*-cover has a richer structure than one might expect and offers an opportunity for further investigations. With regard to this, we conclude our contribution to non-selfadjoint operator algebras with a one point lattice with the following question:

\begin{question}
    Is there a non-selfadjoint commutative operator algebra with a one point lattice?
\end{question}

\section{Extension of an operator algebra by a Shilov ideal}\label{Section:Extension of an operator algebra by a Shilov ideal}

Throughout this section, let $A$ be a unital operator algebra concretely embedded in a C*-algebra generated by $A$, which we will simply denote $C^\ast(A)$. Except when specifically stated, we make no assumption that $C^\ast(A)$ is the C*-envelope $C^\ast_e(A)$ or the maximal C*-algebra $C^\ast_{\max}(A)$ of $A$. Let $I\triangleleft C^\ast(A)$ be the Shilov ideal for $A$ in $C^\ast(A)$.

In Proposition \ref{prop:shilov_extension}, we showed that when enlarging $A$ to the operator algebra $A+I$, the C*-cover $C^\ast(A)$ becomes the C*-envelope. One might wonder what the rest of the C*-cover lattice $\cstarlattice(A+I)$ of $A+I$ is, and how it is related to the lattice for $A$. To explain what follows, the following perspective is useful: The C*-envelope $C^\ast_e(A+I)=C^\ast(A)$ is an extension of the C*-envelope $C^\ast_e(A)\cong C^\ast(A)/I$ by a copy of the ideal $I$, in the usual sense that there is an exact sequence of $\ast$-homomorphisms
\[
0\to I \to C^\ast(A) \to C_e^\ast(A)\to 0.
\]
In the main result of this section, Theorem \ref{thm:shilov_extension_lattice_isomorphism}, we will show that \emph{every} C*-cover of $A+I$ is uniquely built as an extension of a C*-cover of $A$ by a copy of $I$ embedded as a boundary ideal for $A$. This correspondence is an order isomorphism between the C*-cover lattices $\cstarlattice(A+I)$ and $\cstarlattice(A)$.

Here, we will summarize all the argument that follows. Given a C*-cover $(D,\eta)$ for $A+I$, there are two natural ways to reduce this to a C*-cover of $A$. Firstly, the embedded copy $\eta(I)\subseteq D$ of $I$ is a boundary ideal for $\eta(A)$ in $D$, and so there is a \textbf{quotient} C*-cover
\[
Q(D,\eta) :=
\left(\frac{D}{\eta(I)},q\eta_A\right),
\]
where $q:D\to D/\eta(I)$ is the quotient map.
Secondly, simply restricting $\eta$ to $A\subseteq A+I$ gives the \textbf{restriction} of the C*-cover
\[
R(D,\eta) :=
(C^\ast(\eta(A)),\eta\vert_A)
\]
to $A$. Conversely, given a C*-cover $(C,\iota)$ for $A$, how can one build a C*-cover for $A+I$? The most natural way is to pull back the representation $\iota$ along the projection map $p:A+I\to A$. This will not be completely isometric unless $I=0$, and so to get a complete isometry, we form a direct sum with the envelope representation $\id:A+I\to C^\ast(A)=C^\ast_e(A+I)$. This produces what we will call the \textbf{induction}
\[
N(C,\iota) =
(C^\ast((\iota q)\oplus \id_{A+I}, (\iota q)\oplus \id_{A+I})
\]
of the C*-cover $(C,\iota)$ to a C*-cover for $A+I$. Quotients, induction, and restriction induce order-preserving maps of the C*-cover lattices summarized in the diagram
\[\begin{tikzcd}
    \cstarlattice(A+I) \arrow[rr,bend left,"Q"] \arrow[rr,bend right, swap,"R"] & & \cstarlattice(A). \arrow[ll,swap,"N"]
\end{tikzcd}\]
In what follows, we prove that $Q$, $N$, and $R$ are indeed well-defined order preserving maps (Propositions \ref{prop:Q_map}, \ref{prop:D_map}, and \ref{prop:N_map}). Then, we prove that $Q$ and $N$ are mutual inverses (Theorem \ref{thm:shilov_extension_lattice_isomorphism}), and so $A+I$ and $A$ are lattice isomorphic. The map $R$ is not surjective, because it only produces C*-covers of $A$ that dominate the original cover $(C^\ast(A),\id)$, but for such covers, it restricts to a left inverse for $N$ (Proposition \ref{prop:R_vs_N}).

\begin{proposition}\label{prop:Q_map}
Let $A\subseteq C^\ast(A)$ be an operator algebra with Shilov ideal $I\triangleleft C^\ast(A)$. Let $(D,\eta)$ be a C*-cover of the operator algebra $A+I$. Then, $\eta(I)$ is a (selfadjoint) ideal in $D$, and a boundary ideal for $\eta(A)$. The assignment
\[
Q(D,\eta) =
(\frac{D}{\eta(I)},q\eta\vert_A)
\]
descends to an order preserving map
\[
\cstarlattice(A+I)\to \cstarlattice(A),
\]
which we denote by the same symbol.
\end{proposition}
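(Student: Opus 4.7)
The plan is to verify in sequence that (i)~$\eta(I)$ is a selfadjoint ideal of $D$, (ii)~$\eta(I)$ is a boundary ideal for $\eta(A)$ in $D$, and (iii)~the assignment $Q$ is well-defined on equivalence classes and order-preserving. Most of the content is in (ii), which reduces to a direct quotient norm computation.

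For (i), I would observe that $\eta$ restricted to the unital C*-subalgebra $\mathbb{C} 1 + I \subseteq A+I$ is a unital completely contractive algebra homomorphism into the C*-algebra $D$, hence a $\ast$-homomorphism; so $\eta|_I$ is $\ast$-preserving and $\eta(I)$ is a C*-subalgebra of $D$. For the ideal property, since $D$ is generated as a C*-algebra by $\eta(A)\cup\eta(I)$, it suffices to check that $\eta(I)$ is closed under left and right multiplication by $\eta(a)$ and $\eta(a)^*$ for $a\in A$. Products with $\eta(a)$ lie in $\eta(I)$ because $\eta$ is multiplicative on $A+I$ and $I$ is an ideal of $C^*(A)$. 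For $\eta(a)^*\eta(i)$, I would use
\[
\eta(a)^*\eta(i) = (\eta(i^*)\eta(a))^* = \eta(i^*a)^*,
\]
which lies in $\eta(I)^*=\eta(I)$ since $i^*a\in I$; and similarly on the right.

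For (ii), I would compute directly for $x\in M_n(A)$:
\begin{align*}
\|(q\eta)_n(x)\|_{M_n(D/\eta(I))}
&= \inf_{j \in M_n(I)} \|\eta_n(x - j)\|_{M_n(D)} \\
&= \inf_{j \in M_n(I)} \|x - j\|_{M_n(C^*(A))} \\
&= \|x + M_n(I)\|_{M_n(C^*(A)/I)} \;=\; \|x\|_{M_n(A)}.
\end{align*}
The first equality uses that by (i) one has $M_n(\eta(I))=\eta_n(M_n(I))$, the second uses that $\eta$ is completely isometric on $A+I$, and the last uses that $I$ is the Shilov ideal for $A$ in $C^*(A)$.

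For (iii), let $\Psi: D_2 \to D_1$ be a morphism witnessing $(D_1,\eta_1)\preceq(D_2,\eta_2)$, so $\Psi\eta_2=\eta_1$. Then $\Psi(\eta_2(I))=\eta_1(I)$ (surjectivity follows from $\Psi\eta_2|_I = \eta_1|_I$ together with $\eta_2|_I$ being surjective onto $\eta_2(I)$), so $\Psi$ descends to a $\ast$-homomorphism $\bar\Psi: D_2/\eta_2(I) \to D_1/\eta_1(I)$ satisfying $\bar\Psi\,(q_2\eta_2|_A) = q_1\eta_1|_A$. Specializing to the case when $\Psi$ is a $\ast$-isomorphism handles well-definedness, while the general case gives the required morphism $Q(D_1,\eta_1)\preceq Q(D_2,\eta_2)$. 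The only real technical obstacle is bookkeeping in the norm computation in (ii); no new ideas beyond what the paper already sets up are required.
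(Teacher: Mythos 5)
Your proof is correct, and while parts (i) and (iii) essentially coincide with the paper's argument, your part (ii) takes a genuinely different route. For (i), the paper likewise observes that $\eta\vert_I$ is a $\ast$-homomorphism because $I$ is itself a C*-algebra, and proves the ideal property by showing that the multiplier set $M=\{x\in D \mid x\eta(I)\subseteq \eta(I),\ \eta(I)x\subseteq \eta(I)\}$ is a C*-subalgebra containing $\eta(A+I)$, hence all of $D$; your generator-by-generator check (including the adjoint trick $\eta(a)^\ast\eta(i)=\eta(i^\ast a)^\ast$) is the same computation in different packaging. For (iii) the descent of a cover morphism to the quotients is identical to the paper's. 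The divergence is in showing $\eta(I)$ is a boundary ideal for $\eta(A)$: the paper invokes Proposition \ref{prop:shilov_extension} to see that $A+I\subseteq C^\ast(A)$ is the C*-envelope of $A+I$, obtains the envelope morphism $\pi:D\to C^\ast(A)$ with $\pi\eta=\id_{A+I}$, composes with the quotient $\sigma:C^\ast(A)\to C^\ast(A)/I\cong C^\ast_e(A)$, and factors $\sigma\pi$ through $D/\eta(I)$ to force $q$ to be completely isometric on $\eta(A)$. Your direct quotient-norm computation replaces this structural argument by a distance calculation that uses strictly less: only that $\eta$ is completely isometric on $A+I$ and that $I$ is a boundary ideal (not necessarily the Shilov ideal) for $A$ in $C^\ast(A)$. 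That makes your version of this step more elementary, more self-contained, and marginally more general. The only point worth adding explicitly is that $D/\eta(I)=C^\ast(q\eta(A))$, since $D=C^\ast(\eta(A+I))$ and $q$ annihilates $\eta(I)$; this is needed so that $Q(D,\eta)$ is actually a C*-cover of $A$ and not merely a completely isometric representation.
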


\begin{proof}
First, suppose that $(D,\eta)$ is a C*-cover for $A+I$. The ideal $I\subseteq C^\ast(A)$ is itself a C*-algebra, and so the completely isometric homomorphism $\eta\vert_I$ is in fact an injective $\ast$-homomorphism, with range $\eta(I)$ that is a C*-subalgebra of $D$. In fact, $\eta(I)$ is an ideal. To prove this, let
\[
M =
\{x\in D\;\mid\; x \eta(I)\subseteq \eta(I) \text{ and } \eta(I) x\subseteq \eta(I)\}.
\]
It is straightforward to check that $M$ is a subalgebra, that because $\eta(I)$ is closed, so is $M$, and that because $\eta(I)$ is self-adjoint, so is $M$. Therefore $M$ is a C*-subalgebra. Because $I$ is an ideal in $A+I$, and $\eta$ is a homomorphism, it follows that $M$ contains $\eta(A+I)$. Since $D=C^\ast(\eta(A+I))$ is generated by $\eta(A+I)$, we must have $M=D$, so $\eta(I)$ is an ideal.

Now, we prove that $\eta(I)$ is a boundary ideal for the subalgebra $\eta(A)$. By Proposition \ref{prop:shilov_extension}, the identity representation $A+I\subseteq C^\ast(A)$ is the C*-envelope for $A$. Therefore, there is a $\ast$-homomorphism $\pi:D\to C^\ast(A)$ satisfying $\pi \eta = \id_{A+I}$. Now, let $\sigma:C^\ast(A)\to C^\ast(A)/I\cong C_e^\ast(A)$ be the quotient map by $I$ to the C*-envelope of $A$, so we have $\ast$-homomorphisms
\[\begin{tikzcd}
    D \arrow[r,"\pi"] & C^\ast(A) \arrow[r,"\sigma"] & C_e^\ast(A)
\end{tikzcd}\]
for which the composition $\sigma\pi$ is completely isometric on $\eta(A)$. Since $\ker\sigma = I$ and $\pi\eta = \id_{A+I}$, we have $\ker \sigma\pi \supseteq \eta(I)$. Therefore $\pi\sigma$ factors through a $\ast$-homomorphism $D/\eta(I)\to C_e^\ast(A)$, making the diagram
\[\begin{tikzcd}
    D \arrow[r,"\pi"] \arrow[d,"q"] & C^\ast(A) \arrow[r,"\sigma"] & C_e^\ast(A) \\
    \frac{D}{\eta(I)} \arrow[urr,swap]
\end{tikzcd}\]
commute. Because $\sigma\pi$ is completely isometric on $\eta(A)$, and the diagonal map is completely contractive, the quotient map $q$ is completely isometric on $\eta(A)$. Therefore $q\eta\vert_A$ is completely isometric. Note also that because $D=C^\ast(\eta(A+I))$, then $D/\eta(I) =C^\ast(q\eta(A+I))=C^\ast(q\eta(A))$ is generated by $q\eta(A)$, so that $(D/\eta(I),q\eta\vert_A)$ is indeed a C*-cover of $A$.

Now, suppose that $(D,\eta)$ and $(E,\rho)$ are C*-covers of $A+I$ for which $(D,\eta)\ge (E,\rho)$. Then, there is a $\ast$-homomorphism $\pi:D\to E$ satisfying $\pi\eta = \rho$. Then $\pi(\eta(I)) = \rho(I)$, and so $\pi$ induces a $\ast$-homomorphism
\[
\tilde{\pi}:\frac{D}{\eta(I)}\to \frac{E}{\rho(I)}
\]
satisfying $\tilde{\pi}(x+\eta(I)) = \pi(x)+\rho(I)$. The map $\tilde{\pi}$ is a morphism of C*-covers $(D/\eta(I),q\eta\vert_A)\to (E/\rho(I),q\rho\vert_A)$, and so $Q(D,\eta)\ge Q(E,\rho)$. Therefore $Q$ induces a well-defined order preserving map $\cstarlattice(A+I)\to \cstarlattice(A)$.
\end{proof}

\begin{proposition}\label{prop:D_map}
Let $A\subseteq C^\ast(A)$ be an operator algebra with Shilov ideal $I\triangleleft C^\ast(A)$. For C*-covers $(D,\eta)$ of the operator algebra $A+I$, the assignment
\[
R(D,\eta) =
(C^\ast(\eta(A)),\eta\vert_A)
\]
descends to an order preserving map
\[
\cstarlattice(A+I)\to \cstarlattice(A),
\]
which we denote by the same symbol. And, for any such C*-cover $(D,\eta)$, we have $R(D,\eta)\ge (C^\ast(A),\id)$ in the ordering on C*-covers of $A$.
\end{proposition}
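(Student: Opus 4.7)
The plan has three steps. First, I would verify that $R(D,\eta)=(C^\ast(\eta(A)),\eta|_A)$ really is a C*-cover of $A$: the map $\eta|_A$ is completely isometric as the restriction of the completely isometric map $\eta$ on $A+I$, and by construction $C^\ast(\eta(A))$ is generated as a C*-algebra by $\eta(A)$. Both defining properties of a C*-cover are immediate.

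Second, I would show $R$ is order-preserving, from which well-definedness on equivalence classes follows automatically. Assume $(D,\eta)\succeq (E,\rho)$ in $\cstarlattice(A+I)$, witnessed by a $\ast$-homomorphism $\pi:D\to E$ with $\pi\circ\eta=\rho$. Its restriction $\pi|_{C^\ast(\eta(A))}$ is a $\ast$-homomorphism whose image is $C^\ast(\pi(\eta(A)))=C^\ast(\rho(A))$, because a $\ast$-homomorphism maps the C*-algebra generated by a set onto the C*-algebra generated by its image. This restriction satisfies $\pi|_{C^\ast(\eta(A))}\circ \eta|_A=\rho|_A$, so it is a morphism of C*-covers of $A$ and witnesses $R(D,\eta)\succeq R(E,\rho)$. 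Applying this in both directions to equivalent covers shows $R$ descends to a well-defined map of equivalence classes.

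Third, for the domination $R(D,\eta)\succeq (C^\ast(A),\id)$, I would invoke Proposition \ref{prop:shilov_extension}, which identifies $(C^\ast(A),\id)$ as the C*-envelope of $A+I$. Hence there is a $\ast$-homomorphism $\pi:D\to C^\ast(A)$ satisfying $\pi\circ\eta=\id_{A+I}$. Its restriction $\psi:=\pi|_{C^\ast(\eta(A))}$ maps into $C^\ast(A)$ and satisfies $\psi(\eta(a))=a=\id(a)$ for every $a\in A$, which is exactly the morphism of C*-covers of $A$ required to conclude $R(D,\eta)\succeq (C^\ast(A),\id)$. I do not expect any genuine obstacle; the only recurring point is the routine observation that $\ast$-homomorphisms transport C*-generation, which ensures that every restriction lands in the expected subalgebra.
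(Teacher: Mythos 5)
Your proposal is correct and follows essentially the same route as the paper: restriction of a completely isometric map is completely isometric, restricting a C*-cover morphism to the generated subalgebra witnesses order preservation (and hence well-definedness on classes), and the final domination comes from Proposition \ref{prop:shilov_extension} identifying $(C^\ast(A),\id)$ as the C*-envelope of $A+I$ and restricting the resulting morphism. No gaps.
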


\begin{proof}
If $(D,\eta)$ is a C*-cover for $A$, then the restriction $\eta\vert_A$ remains completely isometric, and so $R(D,\eta) = (C^\ast(\eta(A)),\eta\vert_A)$ is a C*-cover of $A$.

Suppose that $(D,\eta)\ge (E,\rho)$ as C*-covers of $A+I$, so that there is an $\ast$-homomorphism $\pi:D\to E$ with $\pi\eta = \rho$. Then $\pi\vert_{C^\ast(\eta(A))}:C^\ast(\eta(A))\to C^\ast(\rho(A))$ is a morphism of C*-covers $R(D,\eta)\to (E,\rho)$. Therefore $Q$ induces a well-defined order preserving map $\cstarlattice(A+I)\to \cstarlattice(A)$.

Now, by Proposition \ref{prop:shilov_extension}, the C*-cover $A+I\subseteq C^\ast(A)$ is the C*-envelope for $A+I$. Therefore there is an $\ast$-homomorphism $\pi:D\to C^\ast(A)$ satisfying $\pi\eta = \id$. Therefore the restriction $\pi\vert_{C^\ast(\eta(A))}:C^\ast(\eta(A))\to C^\ast(A)$ is a morphism of C*-covers $R(D,\eta)\to (C^\ast(A),\iota)$, so $R[D,\eta]\ge [C^\ast(A),\iota]$ in the ordering.
\end{proof}

\begin{proposition}\label{prop:N_map}
Let $A\subseteq C^\ast(A)$ be an operator algebra with Shilov ideal $I\triangleleft C^\ast(A)$. For C*-covers $(C,\iota)$ of the operator algebra $A+I$, the assignment
\[
N(C,\iota) =
(C^\ast((\iota p)\oplus \id_{A+I}),(\iota p)\oplus \id_{A+I}),
\]
where $p:A+I\to A$ is the completely contractive homomorphism projecting to $A$, descends to an order preserving map
\[
\cstarlattice(A)\to \cstarlattice(A+I),
\]
which we denote by the same symbol.
\end{proposition}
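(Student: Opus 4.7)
The plan is to verify three things in turn: that $N(C,\iota)$ is indeed a C*-cover of $A+I$, that $N$ descends to equivalence classes, and that it preserves the order. All three will follow from the observation that in the direct sum $\iota p \oplus \id_{A+I}$, the second component carries the complete isometry while the first carries the data of the cover $(C,\iota)$ of $A$.

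For the first point, the map $\iota p \oplus \id_{A+I}: A+I \to C\oplus C^*(A)$ is a homomorphism since both components are homomorphisms (using Proposition \ref{prop:shilov_extension} for $p$), and its image generates $C^*((\iota p \oplus \id_{A+I})(A+I))$ by definition, so only complete isometry requires checking. Since the second component $\id_{A+I}$ is already completely isometric as the inclusion $A+I\hookrightarrow C^*(A)$, for any matrix $[x_{ij}]$ over $A+I$,
\[
\bigl\|[(\iota p \oplus \id_{A+I})(x_{ij})]\bigr\| = \max\bigl(\|[\iota p(x_{ij})]\|,\|[x_{ij}]\|\bigr) = \|[x_{ij}]\|.
\]
This also explains why the direct sum is essential: when $I\neq 0$, the pull-back $\iota p$ alone fails to be isometric on all of $A+I$, since $p$ annihilates the nonzero ideal $I$.

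For the remaining points, suppose $(C_1,\iota_1) \preceq (C_2,\iota_2)$ in $\cstarlattice(A)$, so there is a unique $*$-homomorphism $\Phi:C_2 \to C_1$ with $\Phi\iota_2 = \iota_1$. The $*$-homomorphism $\Phi \oplus \id_{C^*(A)}: C_2 \oplus C^*(A) \to C_1 \oplus C^*(A)$ sends the generator
\[
(\iota_2 p \oplus \id_{A+I})(x) = (\iota_2 p(x), x)
\]
of $C^*((\iota_2 p \oplus \id_{A+I})(A+I))$ to $(\Phi \iota_2 p(x), x) = (\iota_1 p(x), x) = (\iota_1 p \oplus \id_{A+I})(x)$, which is a generator of $C^*((\iota_1 p \oplus \id_{A+I})(A+I))$. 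Hence $\Phi \oplus \id_{C^*(A)}$ restricts to a $*$-homomorphism between the two generated C*-algebras that intertwines the embeddings of $A+I$, which is precisely a morphism $N(C_2,\iota_2) \to N(C_1,\iota_1)$, establishing $N(C_1,\iota_1) \preceq N(C_2,\iota_2)$. Applying the same construction to both directions of an equivalence $(C_1,\iota_1) \sim (C_2,\iota_2)$ yields mutually inverse morphisms and shows that $N$ descends to a well-defined, order-preserving map of equivalence classes. I do not foresee a significant obstacle: the entire argument amounts to tracking generators through the direct-sum construction, and the nontrivial ingredient is already Proposition \ref{prop:shilov_extension}, which guarantees that $p$ is a completely contractive homomorphism in the first place.
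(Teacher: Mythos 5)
Your proof is correct and follows essentially the same route as the paper's: the direct sum with the identity representation supplies the complete isometry (since $\iota p$ is completely contractive, the max of the two matrix norms is attained on the second summand), and the morphism $\Phi\oplus\id_{C^\ast(A)}$ restricted to the generated C*-subalgebras gives order preservation and well-definedness on equivalence classes. No gaps.
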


\begin{proof}
Given a C*-cover $(C,\iota)$ of $A$,
\[
N(C,\iota) :=
(C^\ast(\iota p \oplus \id),\iota p \oplus \id)
\]
is a C*-cover for $A+I$. Indeed, $\iota$ and $p$ are completely contractive homomorphisms, and therefore so too is $\iota p$. Since $p$ zeroes out $I$, the map $\iota p$ is certainly not completely isometric on $A+I$, but this is fixed by direct summing with the identity representation $\id:A+I\subseteq C^\ast(A)$.

Now, suppose that $(C,\iota)\ge (D,\eta)$ as C*-covers of $A$, so that there is an $\ast$-homomorphism $\pi:C\to D$ satisfying $\pi\iota = \eta$. Then
\[
(\pi\oplus \id):C\oplus C^\ast(A)\to D\oplus C^\ast(A)
\]
is a $\ast$-homomorphism satisfying $(\pi\oplus \id)((\iota p)\oplus \id) =(\pi\iota p)\oplus \id =(\eta p)\oplus \id$. This restricts to a morphism of C*-covers $N(C,\iota)\to N(D,\eta)$. Therefore $N$ induces a well-defined order preserving map $\cstarlattice(A)\to \cstarlattice(A+I)$.
\end{proof}

\begin{theorem}\label{thm:shilov_extension_lattice_isomorphism}
Let $A\subseteq C^\ast(A)$ be an operator algebra with Shilov ideal $I\triangleleft C^\ast(A)$, and consider the larger operator algebra $A+I$. The quotient and induction maps
\[\begin{tikzcd}
    \cstarlattice(A+I) \arrow[rr,bend left,"Q"] && \cstarlattice(A) \arrow[ll,bend left,"N"]
\end{tikzcd}\]
are order-preserving mutual inverses, and so $A$ and $A+I$ are lattice isomorphic. In particular, every C*-cover of $A+I$ is an induction of a unique C*-cover of $A$, which is a C*-cover that is an extension of a C*-cover of $A$ by a copy of $I$, embedded as a boundary ideal for the copy of $A$.
\end{theorem}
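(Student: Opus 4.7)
The plan is to verify that the order-preserving maps $Q$ and $N$ from Propositions~\ref{prop:Q_map} and \ref{prop:N_map} are mutual inverses on equivalence classes of C*-covers. Every remaining assertion in the theorem then follows: the formula $N(Q(D,\eta))\sim (D,\eta)$ presents every C*-cover of $A+I$ uniquely as an induction of a C*-cover of $A$, and the description as an extension by a boundary copy of $I$ was already recorded in Proposition~\ref{prop:Q_map}.

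For $Q\circ N=\id$, fix a C*-cover $(C,\iota)$ of $A$ and set $\phi:=(\iota p)\oplus\id_{A+I}$ and $D':=C^\ast(\phi(A+I))\subseteq C\oplus C^\ast(A)$, so that $N(C,\iota)=(D',\phi)$ and $\phi(I)=\{0\}\oplus I$. The first-coordinate projection $\pi_1:C\oplus C^\ast(A)\to C$ restricts to a surjective $\ast$-homomorphism $\psi:D'\to C$ that kills $\phi(I)$ and sends $\phi(a)\mapsto\iota(a)$, so it suffices to show $\ker\psi\subseteq\phi(I)$. For this, let $\sigma:C^\ast(A)\to C^\ast(A)/I=C^\ast_e(A)$ be the envelope quotient and $\sigma_C:C\to C^\ast_e(A)$ the unique C*-cover morphism from $(C,\iota)$ to the envelope. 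Then $\sigma\pi_2$ and $\sigma_C\pi_1$ are $\ast$-homomorphisms $C\oplus C^\ast(A)\to C^\ast_e(A)$ that both send $\phi(a)\mapsto i_e(a)$ and $\phi(i)\mapsto 0$; hence they agree on the generating set $\phi(A+I)$ and thus on all of $D'$. Consequently any $x=(0,y)\in D'$ satisfies $\sigma(y)=\sigma_C(0)=0$, forcing $y\in I$ and $x\in\phi(I)$. The descended $\ast$-isomorphism $D'/\phi(I)\to C$ gives $Q(N(C,\iota))\sim (C,\iota)$.

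For $N\circ Q=\id$, fix a C*-cover $(D,\eta)$ of $A+I$ and write $Q(D,\eta)=(C,\iota)$ with $C=D/\eta(I)$, $\iota=q\eta\vert_A$, and $q:D\to C$ the quotient map. By Proposition~\ref{prop:shilov_extension}, $(C^\ast(A),\id)$ is the C*-envelope of $A+I$, so there is a unique $\ast$-homomorphism $\pi:D\to C^\ast(A)$ with $\pi\eta=\id_{A+I}$. Define $\Phi:D\to C\oplus C^\ast(A)$ by $\Phi(x)=(q(x),\pi(x))$. A direct computation gives $\Phi\eta=\phi$, so $\Phi(D)=C^\ast(\phi(A+I))$ is the ambient C*-algebra of $N(C,\iota)$, and $\Phi$ restricts to a surjective morphism of C*-covers $(D,\eta)\to N(Q(D,\eta))$. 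Injectivity is immediate: if $\Phi(x)=0$ then $x\in\ker q=\eta(I)$, say $x=\eta(j)$ with $j\in I$, and then $\pi(x)=\pi\eta(j)=j=0$ forces $x=0$. Hence $\Phi$ is a $\ast$-isomorphism and $N(Q(D,\eta))\sim (D,\eta)$.

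The only genuinely subtle step is the injectivity argument in $Q\circ N=\id$: one must notice that the envelope quotient on $C^\ast(A)$ and the envelope morphism out of $C$ produce two \emph{a priori} distinct $\ast$-homomorphisms $D'\to C^\ast_e(A)$ that nevertheless coincide on generators, and then leverage that coincidence to pin down the second coordinate of a kernel element. By contrast, in the $N\circ Q=\id$ direction the universal property $\pi\eta=\id$ makes the corresponding injectivity step almost automatic.
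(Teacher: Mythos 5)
Your proof is correct and follows essentially the same route as the paper's: the $N\circ Q=\id$ half is verbatim the paper's argument (the map $q\oplus\pi$ with $\ker q\cap\ker\pi=\eta(I)\cap\ker\pi=0$), and the $Q\circ N=\id$ half rests on the same key input, namely the envelope morphism $\sigma_C\colon C\to C^\ast(A)/I\cong C^\ast_e(A)$. The only cosmetic difference is that in the latter half the paper builds the injection $C\hookrightarrow D'/\phi(I)$ directly from $\id_C\oplus\sigma_C$, whereas you exhibit the inverse surjection $D'\to C$ and identify its kernel as $\phi(I)$ by comparing $\sigma\pi_2$ with $\sigma_C\pi_1$ on the generators $\phi(A+I)$; both computations are valid.
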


\begin{proof}
Let $(C,\iota)$ be a C*-cover of $A$. Then let $N(C,\eta) = (C^\ast((\iota p\oplus \id)(A+I)),\iota p\oplus \id)=:(D,\eta)$, where $p:A+I\to A$ is the completely contractive projection homomorphism and $\id:A+I\to C^\ast(A)$ is the identity representation. The image of the ideal $I$ is $\eta(I) = 0\oplus I$. Because $I$ is the Shilov ideal, the map $\rho:A\to C^\ast(A)\to C^\ast(A)/I$ is a C*-envelope for $A$, and so there is an injective $\ast$-homomorphism $\pi:C\to C\oplus \frac{C^\ast(A)}{I}$ which satisfies $\pi(a) =a\oplus \rho(a)$ for all $a\in A$. Putting it all together, we have a composition of injective $\ast$-homomorphisms
\[\begin{tikzcd}
    C \arrow[r,"\pi"] & C\oplus \frac{C^\ast(A)}{I} \arrow[r,"\sim"] & \frac{C\oplus C^\ast(A)}{0\oplus I}\arrow[r,equals] & \frac{D}{\eta(I)}.
\end{tikzcd}\]
which gives an isomorphism of C*-covers $(C,\iota)\to QN(C,\iota)$ of $A$, so in terms of equivalence classes, $QN[C,\iota] = [C,\iota]$. Therefore $QN=\id$.

Now, let $(D,\eta)$ be a C*-cover of $A+I$, so that $Q(D,\eta) = (\frac{D}{\eta(I)},q\eta\vert_A)$, where $q:D\to D/\eta(I)$ is the quotient map by the $\eta(A)$-boundary ideal $\eta(I)$. Then
\[
NQ(D,\eta) =
(C^\ast(((q\eta p)\oplus \id)(A+I)),q\eta p\oplus \id)
\]
where
\[
C^\ast(q\eta p \oplus \id) \subseteq
\frac{D}{\eta(I)}\oplus C^\ast(A).
\]
By Proposition \ref{prop:shilov_extension}, the C*-algebra $C^\ast(A)$ is the C*-envelope for $A+I$, and so there is an $\ast$-homomorphism $\pi:D\to C^\ast(A)$ satisfying $\pi\eta = \id$. Therefore we have a $\ast$-homomorphism
\[
\sigma := q\oplus \pi:D\to \frac{D}{\eta(I)}\oplus C^\ast(A)
\]
that satisfies $\sigma\eta =
(q\eta)\oplus \id = (q\eta p)\oplus \id$. Note that $q\eta =q\eta p$ because $p:A+I\to A$ annihilates $I$, and $q$ annihilates $\eta(I)$. Therefore $\sigma$ is in fact a morphism of C*-covers $(D,\eta)\to NQ(D,\eta)$. Moreover, we have $\ker\sigma = \ker q \cap \ker \pi = 0$, because $\ker q = \eta(I)$, and $\pi$ is completely isometric on $\eta(I)$. Therefore $\sigma$ is injective and so gives an isomorphism of C*-covers $(D,\eta)\to NQ(D,\eta)$, so in terms of equivalence classes $NQ[D,\eta]=[D,\eta]$. Therefore $NQ=\id$, so $N$ and $Q$ are mutually inverse.
\end{proof}

\begin{corollary}
Let $A\subseteq C^\ast(A)$ be an operator algebra with Shilov ideal $I\triangleleft C^\ast(A)$. Let $(C^\ast_{\max}(A),i_A)$ be the maximal C*-cover of $A$. Then the maximal C*-cover of $A+I$ is
\[
(C^\ast_{\max}(A+I),i_{A+I}) =
(C^\ast((i_A p\oplus \id)(A+I)),i_A p\oplus \id).
\]
I.e. $C^\ast_{\max}(A+I)$ is the C*-algebra generated by the image of $A+I$ under the map
\[
a+x \mapsto 
i_A(a)\oplus (a+x)
\]
for $a\in A$ and $x\in I$. There is an exact sequence
\[\begin{tikzcd}
0 \arrow[r] & I \arrow[r,"i_{A+I}\vert_I"] & C^\ast_{\max}(A+I) \arrow[r,"\tilde{p}"] & C^\ast_{\max}(A)\arrow[r] & 0
\end{tikzcd}\]
where $\tilde{p}$ is the unique $\ast$-homomorphism that extends the completely contractive homomorphism $i_A p:A+I\to A \to C^\ast_{\max}(A)$.
\end{corollary}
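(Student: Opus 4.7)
The plan is to derive the corollary essentially for free from Theorem \ref{thm:shilov_extension_lattice_isomorphism}, which already provides the full lattice isomorphism $N:\cstarlattice(A)\to\cstarlattice(A+I)$ with inverse $Q$. Since $N$ is an order isomorphism between complete lattices, it must carry the maximum element of $\cstarlattice(A)$, namely $[C^\ast_{\max}(A),i_A]$, to the maximum of $\cstarlattice(A+I)$. Unfolding the definition of $N$ from Proposition \ref{prop:N_map} with $\iota = i_A$ gives exactly the claimed description
\[
(C^\ast_{\max}(A+I),i_{A+I}) = (C^\ast((i_A p\oplus\id)(A+I)),\,i_A p\oplus\id),
\]
so the first half is immediate once the lattice isomorphism is invoked.

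For the exact sequence, I would build $\tilde p$ via the universal property of Lemma \ref{lem:uni property}. The composition $i_A p:A+I\to C^\ast_{\max}(A)$ is a u.c.c.\ homomorphism (composition of a completely contractive projection with a completely isometric embedding), so it extends uniquely to a $\ast$-homomorphism $\tilde p:C^\ast_{\max}(A+I)\to C^\ast_{\max}(A)$ with $\tilde p\,i_{A+I}=i_A p$. Surjectivity is automatic: the image is a C*-subalgebra containing $\tilde p(i_{A+I}(A)) = i_A(A)$, which generates $C^\ast_{\max}(A)$.

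It remains to identify $\ker\tilde p$ with $i_{A+I}(I)$. The inclusion $i_{A+I}(I)\subseteq\ker\tilde p$ is clear from $\tilde p(i_{A+I}(x))=i_A(p(x))=0$ for $x\in I$, since $p$ annihilates $I$. For the reverse inclusion, I would appeal to Proposition \ref{prop:Q_map}, which guarantees that $i_{A+I}(I)$ is an ideal in $C^\ast_{\max}(A+I)$ and a boundary ideal for $i_{A+I}(A)$, so that $\tilde p$ factors through the quotient $C^\ast_{\max}(A+I)/i_{A+I}(I)$. By Theorem \ref{thm:shilov_extension_lattice_isomorphism}, $Q = N^{-1}$ carries the maximum $[C^\ast_{\max}(A+I),i_{A+I}]$ to $[C^\ast_{\max}(A),i_A]$, so the induced quotient map gives an \emph{isomorphism} of C*-covers $(C^\ast_{\max}(A+I)/i_{A+I}(I),\,q\,i_{A+I}|_A)\to (C^\ast_{\max}(A),i_A)$. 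The induced map from the quotient to $C^\ast_{\max}(A)$ must coincide with the factorization of $\tilde p$ by the universal property (uniqueness of the $\ast$-homomorphism extending $i_A p$), and since this factorization is injective, we conclude $\ker\tilde p = i_{A+I}(I)$.

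The only conceptual point worth verifying carefully is that the factorization of $\tilde p$ through $C^\ast_{\max}(A+I)/i_{A+I}(I)$ really coincides with the isomorphism supplied by the lattice correspondence $Q$, rather than some other morphism. This is straightforward: both are $\ast$-homomorphisms out of the same quotient, and both agree on the dense subalgebra $q\,i_{A+I}(A+I)$, sending the class of $i_{A+I}(a+x)$ to $i_A(a)$, so uniqueness of $\ast$-homomorphic extension forces them to agree. No further obstacle arises, and the corollary follows.
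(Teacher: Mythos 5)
Your proposal is correct and is exactly the derivation the paper intends: the corollary is stated without proof as an immediate consequence of Theorem \ref{thm:shilov_extension_lattice_isomorphism}, and your argument (the order isomorphism $N$ carries $[C^\ast_{\max}(A),i_A]$ to the maximum of $\cstarlattice(A+I)$, and the kernel identification $\ker\tilde p = i_{A+I}(I)$ follows from $Q=N^{-1}$ together with uniqueness of the $\ast$-homomorphic extension of $i_Ap$) fills in precisely the steps the authors leave implicit. No gaps.
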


Note that in the case where $I\triangleleft C^\ast_{\max}(A)$ is the Shilov ideal for $A$ embedded in its maximal C*-cover, this recovers Theorem \ref{Cmax_A+I}.


\begin{proposition}\label{prop:R_vs_N}
Let $A\subseteq C^\ast(A)$ be an operator algebra with Shilov ideal $I\triangleleft C^\ast(A)$. Let $N:\cstarlattice(A)\to \cstarlattice(A+I)$ the induction map, and denote by $R:\cstarlattice(A+I)\to \cstarlattice(A)$ the restriction map. Then for any C*-cover $(C,\iota)$ of $A$, we have that
\[
RN[C,\iota] =
[C,\iota]\vee [C^\ast(A),\id]
\]
is the lattice join with the C*-cover $C^\ast(A)$ in which $A$ is concretely embedded. Furthermore, $RNR=R$, and for any C*-cover $[D,\eta]$ of $A+I$,
\[
NR[D,\eta] = [D,\eta]\vee N[C^\ast(A),\id]
\]
is the lattice join with the induction of $[C^\ast(A),\id]$.

In particular, $R$ is a surjective order preserving map onto the sub-lattice of C*-covers of $A$ that dominate $[C^\ast(A),\id]$ in the ordering.
\end{proposition}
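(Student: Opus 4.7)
My plan is to prove the four statements in sequence, leaning on the lattice isomorphism $Q:\cstarlattice(A+I)\to\cstarlattice(A)$ (with inverse $N$) from Theorem \ref{thm:shilov_extension_lattice_isomorphism} to reduce the less obvious identities to a single computation in $\cstarlattice(A)$.

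The identity $RN[C,\iota] = [C,\iota]\vee[C^\ast(A),\id]$ is a direct unwinding of definitions: $N(C,\iota)$ has representation $a+x\mapsto \iota(a)\oplus(a+x)$, so $R$ cuts this down to the C*-cover with representation $a\mapsto \iota(a)\oplus a$, which generates $C^\ast((\iota\oplus \id)(A))$. By the explicit join formula recorded in Section \ref{Section:Background}, this is exactly the claimed join. From this, $RNR=R$ follows by applying the first identity to $R[D,\eta]$ and using $R[D,\eta]\ge [C^\ast(A),\id]$ (Proposition \ref{prop:D_map}) to trivialize the resulting join. Surjectivity of $R$ onto the upper sublattice $\{[C,\iota]\ge[C^\ast(A),\id]\}$ is likewise immediate, since the first identity restricted to this sublattice says $RN=\id$, so $N$ serves as a right inverse for $R$.

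The substantive claim is $NR[D,\eta] = [D,\eta]\vee N[C^\ast(A),\id]$. Because $N$ is an order isomorphism (hence join-preserving) with $NQ=\id$, this identity is equivalent to
\[
R[D,\eta] \;=\; Q[D,\eta]\vee [C^\ast(A),\id]
\]
in $\cstarlattice(A)$, where $R[D,\eta]=(C^\ast(\eta(A)),\eta|_A)$ and $Q[D,\eta]=(D/\eta(I),q_D\eta|_A)$ with $q_D:D\to D/\eta(I)$ the quotient map. One inequality is easy: $R[D,\eta]$ dominates $Q[D,\eta]$ via the restriction of $q_D$ to $C^\ast(\eta(A))$, and dominates $[C^\ast(A),\id]$ via the restriction of the envelope morphism $\pi:D\to C^\ast(A)$ guaranteed by Proposition \ref{prop:shilov_extension}.

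The main obstacle is the reverse inequality. I would construct the $\ast$-homomorphism
\[
\beta:C^\ast(\eta(A))\to D/\eta(I)\oplus C^\ast(A),\qquad \beta(y)=q_D(y)\oplus \pi(y),
\]
and show it is a $\ast$-isomorphism of $C^\ast(\eta(A))$ onto the C*-subalgebra generated by $\{q_D\eta(a)\oplus a:a\in A\}$ (the underlying algebra of the join). Surjectivity is transparent, since $\beta(\eta(a))=q_D\eta(a)\oplus a$ hits every generator. Injectivity is the delicate point: $\beta(y)=0$ forces $y\in\eta(I)\cap C^\ast(\eta(A))$ together with $\pi(y)=0$, and since $\pi\eta=\id_{A+I}$ the restriction $\pi|_{\eta(I)}$ inverts the isomorphism $\eta|_I:I\to\eta(I)$, yielding $y=0$. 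This $\beta$ intertwines the representations $\eta|_A$ and $a\mapsto q_D\eta(a)\oplus a$, giving the required equivalence of C*-covers and completing the proof.
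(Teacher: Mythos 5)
Your proposal is correct. For the identity $RN[C,\iota]=[C,\iota]\vee[C^\ast(A),\id]$, for $RNR=R$, and for the surjectivity of $R$ onto the covers dominating $[C^\ast(A),\id]$, your argument coincides with the paper's: a direct unwinding using $p\vert_A=\id_A$ and the join formula, combined with the fact from Proposition \ref{prop:D_map} that every $R[D,\eta]$ dominates $[C^\ast(A),\id]$. Where you genuinely diverge is the identity $NR[D,\eta]=[D,\eta]\vee N[C^\ast(A),\id]$. The paper obtains this purely formally: since $N$ is a surjective order isomorphism it preserves joins, so writing $[D,\eta]=N[C,\iota]$ gives $NR[D,\eta]=N(RN[C,\iota])=N([C,\iota]\vee[C^\ast(A),\id])=N[C,\iota]\vee N[C^\ast(A),\id]$. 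You instead transport the identity through $Q=N^{-1}$ to the equivalent statement $R[D,\eta]=Q[D,\eta]\vee[C^\ast(A),\id]$ and verify it by hand, exhibiting the $\ast$-homomorphism $\beta=(q_D\oplus\pi)\vert_{C^\ast(\eta(A))}$ onto the C*-algebra of the join and checking injectivity via $\ker q_D\cap\ker\pi=\eta(I)\cap\ker\pi=0$, using that $\pi\vert_{\eta(I)}$ inverts $\eta\vert_I$. This is sound, and all the needed ingredients ($\ker q_D=\eta(I)$, $\pi\eta=\id_{A+I}$) are in place; note, though, that your map $\beta$ and its injectivity argument are exactly the restriction of the map $\sigma=q\oplus\pi$ already used in the paper's proof of Theorem \ref{thm:shilov_extension_lattice_isomorphism} to establish $NQ=\id$, so you are re-deriving a special case of that computation rather than invoking its formal consequence. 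The trade-off is that the paper's route is a two-line deduction, while yours costs a little more work but produces the explicit isomorphism $C^\ast(\eta(A))\cong C^\ast((q_D\eta\oplus\id)(A))$, which makes the relation between $R$ and $Q$ (cf.\ Corollary \ref{cor:R_vs_Q}) concrete.
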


\begin{proof}
Recall that the lattice join is the direct sum
\[
[C,\iota]\vee[C^\ast(A),\id] =
[C^\ast((\iota\oplus \id)(A)),\iota\oplus \id].
\]
We have
\[
N[C,\iota] =
[C^\ast((\iota p\oplus \id)(A+I)),\iota p \oplus \id ].
\]
Upon restricting to $A$, we have $p\vert_A = \id_A$, and so
\[
RN[C,\iota] =
[C^\ast((\iota p\oplus \id)(A)),(\iota p\oplus \id)\vert_A] =
[C^\ast((\iota \oplus \id)(A)),
\iota \oplus \id],
\]
which is exactly the lattice join $[C,\iota]\vee [C^\ast(A),\id]$.

Now, we have that $[C,\iota]\ge [C^\ast(A),\id]$ in the ordering if and only if $[C,\iota]\vee [C^\ast(A),\id]=[C,\iota]$, if and only if $RN[C,\iota]=[C,\iota]$, so $R$ maps surjectively onto the set of (classes of) C*-covers dominating $[C^\ast(A),\id]$. In Proposition \ref{prop:D_map}, we showed that for any C*-cover $[D,\eta]$ of $A+I$, that $R[D,\eta]\ge [C^\ast(A),\iota]$, and so it follows that $RNR=R$.

Now, let $(C,\iota)$ be a C*-cover of $A$. Then, because $N$ is a lattice isomorphism
\[
NRN[C,\iota] =
N([C,\iota]\vee [C^\ast(A),\id]) =
N[C,\iota]\vee N[C^\ast(A),\id].
\]
Because $N$ is surjective, this proves that for any C*-cover $[D,\eta]$ of $A+I$, that
\[
NR[D,\eta] =
[D,\eta] \vee N[C^\ast(A),\id].
\]\qedhere
\end{proof}

\begin{corollary}\label{cor:R_vs_Q}
Let $A\subseteq C^\ast(A)$ be an operator algebra with Shilov ideal $I\triangleleft C^\ast(A)$. For any C*-cover $(D,\eta)$ of $A+I$, we have $R[D,\eta] \ge Q[D,\eta]$. In fact, there is the explicit morphism of C*-covers of $A$ given by
\[\begin{tikzcd}
    C^\ast(\eta(A)) \arrow[r,hook] & D \arrow[r] & \frac{D}{\eta(I)}
\end{tikzcd}\]
\end{corollary}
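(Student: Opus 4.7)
The plan is to verify directly that the composition displayed in the statement is a morphism of C*-covers of $A$ from $R[D,\eta] = (C^\ast(\eta(A)), \eta|_A)$ down to $Q[D,\eta] = (D/\eta(I), q\eta|_A)$. Once this morphism is exhibited, the inequality $R[D,\eta] \ge Q[D,\eta]$ follows immediately from the definition of the order on $\cstarlattice(A)$.

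First I would recall from the proof of Proposition \ref{prop:Q_map} that $\eta(I)$ is a (self-adjoint) ideal of $D$, so the quotient map $q: D \to D/\eta(I)$ is a well-defined surjective $\ast$-homomorphism. The inclusion $\iota : C^\ast(\eta(A)) \hookrightarrow D$ is likewise a $\ast$-homomorphism, so the composition
\[
\phi := q \circ \iota : C^\ast(\eta(A)) \longrightarrow D/\eta(I)
\]
is a $\ast$-homomorphism.

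Next, I would check the intertwining condition. For every $a \in A$,
\[
\phi(\eta(a)) = q(\eta(a)) = (q\eta|_A)(a),
\]
so $\phi \circ (\eta|_A) = q\eta|_A$, which is exactly the condition that $\phi$ be a morphism from the C*-cover $(C^\ast(\eta(A)), \eta|_A)$ of $A$ to the C*-cover $(D/\eta(I), q\eta|_A)$ of $A$. By the definition of $\preceq$, this gives $Q[D,\eta] \preceq R[D,\eta]$, i.e.\ $R[D,\eta] \ge Q[D,\eta]$.

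There is no genuine obstacle here, since the only ingredient beyond the definitions is the fact that $\eta(I)$ is an ideal, which has already been established. As a sanity check, one can also derive the inequality abstractly from the preceding results: by Theorem \ref{thm:shilov_extension_lattice_isomorphism}, $NQ$ is the identity on $\cstarlattice(A+I)$, and by Proposition \ref{prop:R_vs_N}, $RN[C,\iota] = [C,\iota] \vee [C^\ast(A),\id]$; applying this with $[C,\iota] = Q[D,\eta]$ yields $R[D,\eta] = RNQ[D,\eta] = Q[D,\eta] \vee [C^\ast(A),\id] \ge Q[D,\eta]$. The direct construction is preferable since it exhibits the explicit morphism named in the statement.
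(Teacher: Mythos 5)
Your proof is correct and matches the paper's: the paper derives the inequality from the lattice identities ($NR[D,\eta]=[D,\eta]\vee N[C^\ast(A),\id]\ge[D,\eta]=NQ[D,\eta]$, then apply $Q=N^{-1}$) and remarks that the displayed composition is "straightforward to check" to be a morphism, which is exactly the direct verification you carry out. You simply reverse the emphasis, leading with the explicit morphism $q\circ\iota$ and its intertwining property $\phi\circ(\eta|_A)=q\eta|_A$ and relegating the abstract derivation to a sanity check; both routes are valid and both appear, in some form, in the paper.
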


\begin{proof}
We have $NR[D,\eta] = [D,\eta]\vee N[C^\ast(A),\id] \ge [D,\eta] = NQ[D,\eta]$, and so applying $Q=N^{-1}$ gives the stated ordering. Independently, it is straightforward to check that the stated composition is a morphism of C*-covers of $A$.
\end{proof}

Now, we are in a position to identify exactly when the two operations of restriction $R$ and quotient $Q$ coincide for C*-covers of $A+I$.

\begin{corollary}
Let $A\subseteq C^\ast(A)$ be an operator algebra with Shilov ideal $I\triangleleft C^\ast(A)$. Let $(D,\eta)$ be a C*-cover of $A+I$, and let $q:D\to D/\eta(I)$ be the quotient map. The following are equivalent:
\begin{itemize}
\item[(1)] $C^\ast(\eta(A))\cap \eta(I)=0$ and so $D=C^\ast(\eta(A))\oplus \eta(I)$ as a direct sum of vector spaces.
\item[(2)] The composed $\ast$-homomorphism
\[\begin{tikzcd}
    C^\ast(\eta(A)) \arrow[r,hook] & D \arrow[r,"q"] & \frac{D}{\eta(I)}
\end{tikzcd}\]
is a $\ast$-isomorphism, and so $Q[D,\eta] = R[D,\eta]$.
\item[(3)] There is a $\ast$-homomorphism
\[
\pi:D\to C^\ast(A)
\]
satisfying $\pi\eta\vert_A=\id_A$ and $\pi(\eta(I))=0$.
\item [(4)] The C*-cover $(D,\eta)$ dominates $N[C^\ast(A),\id]= [C^\ast((p\oplus \id)(A)),p\oplus \id]$.
\item[(5)] The C*-cover $(D,\eta)$ is an induction $N(C,\iota)$ of a C*-cover of $(C,\iota)$ of $A$ that dominates $(C^\ast(A),\id)$.
\end{itemize}
\end{corollary}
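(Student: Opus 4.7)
The plan is to cycle through the equivalences $(1)\Leftrightarrow(2)$, $(2)\Rightarrow(3)\Rightarrow(4)\Rightarrow(1)$, and $(4)\Leftrightarrow(5)$, using the lattice isomorphism of Theorem \ref{thm:shilov_extension_lattice_isomorphism} and the join identities of Proposition \ref{prop:R_vs_N} as the principal tools.

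First, $(1)\Leftrightarrow(2)$ is essentially linear algebra applied to the canonical $*$-homomorphism $\phi: C^\ast(\eta(A))\hookrightarrow D\to D/\eta(I)$ from Corollary \ref{cor:R_vs_Q}: its kernel is $C^\ast(\eta(A))\cap\eta(I)$ and its image is $(C^\ast(\eta(A))+\eta(I))/\eta(I)$, so $\phi$ is a $*$-isomorphism exactly when both clauses of (1) hold. Since $\phi$ is already a morphism of C*-covers $R[D,\eta]\to Q[D,\eta]$ of $A$, its being an isomorphism is precisely the assertion $Q[D,\eta]=R[D,\eta]$ in (2). The equivalence $(4)\Leftrightarrow(5)$ is immediate from the lattice isomorphism: $(5)\Rightarrow(4)$ by the order preservation of $N$, and $(4)\Rightarrow(5)$ by setting $(C,\iota):=Q(D,\eta)$, so that $[D,\eta]=NQ[D,\eta]=N[C,\iota]$ and applying $Q$ to (4) yields $[C,\iota]\ge QN[C^\ast(A),\id]=[C^\ast(A),\id]$.

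For $(2)\Rightarrow(3)$: Proposition \ref{prop:D_map} guarantees a morphism of C*-covers $\sigma:C^\ast(\eta(A))\to C^\ast(A)$ over $A$, so setting $\pi := \sigma\circ \phi^{-1}\circ q$ yields a $*$-homomorphism $D\to C^\ast(A)$ that kills $\eta(I)=\ker q$ and restricts to $\id_A$ on $\eta(A)$. For $(3)\Rightarrow(4)$, combine $\pi$ with the unique envelope extension $\pi':D\to C^\ast(A)$ satisfying $\pi'\eta=\id_{A+I}$ (from Proposition \ref{prop:shilov_extension}) to form $\pi\oplus \pi':D\to C^\ast(A)\oplus C^\ast(A)$; then $(\pi\oplus \pi')\eta = p\oplus \id_{A+I}$, which by generation maps $D$ into $N(C^\ast(A),\id)$ and exhibits the required morphism $[D,\eta]\ge N[C^\ast(A),\id]$.

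The key closing step is $(4)\Rightarrow(1)$, which falls out elegantly from the join formula of Proposition \ref{prop:R_vs_N}: $NR[D,\eta]=[D,\eta]\vee N[C^\ast(A),\id]$, and hypothesis (4) collapses this join to $[D,\eta]$. Hence $NR[D,\eta]=[D,\eta]=NQ[D,\eta]$, and applying $Q=N^{-1}$ gives $R[D,\eta]=Q[D,\eta]$, which is (2) and therefore (1). The main conceptual obstacle is keeping the many objects and maps straight---especially verifying that $\sigma$ in $(2)\Rightarrow(3)$ may always be chosen as a morphism of C*-covers of $A$---but once the lattice-theoretic framework is in place, this is largely bookkeeping.
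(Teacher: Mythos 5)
Your proposal is correct and uses essentially the same machinery as the paper: the standard kernel/image argument for $(1)\Leftrightarrow(2)$, the lattice isomorphism $QN=NQ=\id$ for $(4)\Leftrightarrow(5)$, and the join formula $NR[D,\eta]=[D,\eta]\vee N[C^\ast(A),\id]$ of Proposition \ref{prop:R_vs_N} to close the loop back to $(2)$. The only organizational difference is that you route through $(2)\Rightarrow(3)\Rightarrow(4)$ by explicit construction of the homomorphisms $\sigma\circ\phi^{-1}\circ q$ and $\pi\oplus\pi'$, whereas the paper observes directly that $(3)$ is equivalent to $Q[D,\eta]\ge[C^\ast(A),\id]$ and hence to $(4)$; both are valid and rest on the same propositions.
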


\begin{proof}
The equivalence of (1) and (2) uses only the following standard facts from C*-algebra theory: If $A\subseteq B$ are C*-algebras, and $I\triangleleft B$ is an ideal, then the composition
\[
A\to B\to \frac{B}{I}
\]
composes with a $\ast$-isomorphism to
\[
A\to \frac{A+I}{I} \cong
\frac{A}{A\cap I},
\]
which is an isomorphism if and only if $A\cap I=0$. If, furthermore, $A$ and $I$ generate $B$ as a C*-algebra, then it follows that $B=A\oplus I$ as vector spaces.

Item (3) is equivalent to the existence of a $\ast$-homomorphism $\tilde{\pi}:D/\eta(I)\to C^\ast(A)$ that is a morphism of covers $Q(D,\eta) \to (C^\ast(A),\id)$. Therefore (3) holds if and only if $Q[D,\eta]\ge [C^\ast(A),\id]$, which holds if and only if $[D,\eta] \ge N[C^\ast(A),\id]$, which is (4). And, (4) is equivalent to (5) by applying the lattice isomorphisms $Q$ and $N$. So, (3), (4), and (5) are equivalent.

Now, item (2) is equivalent to the assertion that the morphism in Corollary \ref{cor:R_vs_Q} is an isomorphism, which is equivalent to asserting that $Q[D,\eta] = R[D,\eta]$, and so $[D,\eta] = NQ[D,\eta] =NR[D,\eta] =[D,\eta]\vee N[C^\ast(A),\id]$, which is equivalent to (4).
\end{proof}

\section{Uncountability of the C*-cover lattice}\label{Section:Uncountability of the C*-cover lattice}

One might expect that if there is an non-selfadjoint operator algebra with only one C*-cover, there are also some with exactly two C*-covers. Surprisingly, this is not the case. More generally, we will show in this chapter that every operator algebra with at most countably many C*-covers already has exactly one C*-cover, or conversely that the lattice of C*-covers is either a singleton or uncountable. The following lemma will be of great help here. The lemma and its proof are from \cite{Kacnelson}.

\begin{lemma}\label{lem:twist_dilation}
Let $\pi:\mathcal{A}\to\mathcal{B}(H)$ be a u.c.c. homomorphism and suppose that $\sigma:\mathcal{A}\to\mathcal{B}(K)$ is a u.c.c. homomorphism that dilates $\pi$, with $H\subseteq K$. By Sarason's Lemma \cite{Sarason}, there exists a decomposition of $K$ as a direct sum $H_1\oplus H\oplus H_2$ such that
  \begin{equation*}
    \sigma=
    \begin{pmatrix}  \pi_1 & \sigma_{1,2}  & \sigma_{1,3}\\  0  & \pi  &\sigma_{2,3}\\  0   &   0   & \pi_3 \end{pmatrix}.
  \end{equation*}
Then, for any choice of $z\in\overline{\mathbb{D}}$,
  \begin{equation*}
    \sigma_z:=\begin{pmatrix}  \pi_1 & z\sigma_{1,2}  & z^2\sigma_{1,3}\\  0  & \pi  &z\sigma_{2,3}\\  0   &   0   & \pi_3 \end{pmatrix}
  \end{equation*}
is again a u.c.c. homomorphism dilating $\pi$. The map 
  \[
    \overline{\bbD}\to \text{ucc}(A,B(K)), z\mapsto \sigma_z
  \]
is norm-continuous, $\sigma_1=\sigma$, and $\sigma_0$ is a trivial dilation of $\pi$. Moreover, whenever $|z|=|w|$, the representations $\sigma_z$ and $\sigma_w$ are unitarily equivalent.
\end{lemma}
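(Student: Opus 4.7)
The strategy is to first verify, by a direct block-matrix calculation, that $\sigma_z$ is multiplicative for \emph{every} $z\in\bbC$, and then to establish complete contractivity in two separate regimes: by unitary conjugation for $|z|=1$, and by Poisson-kernel averaging for $r\in[0,1]$. Arbitrary $z\in\overline{\bbD}$ together with the unitary-equivalence statement then follow by combining the two.

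\textbf{Step 1 (Multiplicativity).} Writing $\sigma=(\sigma_{ij})_{1\le i\le j\le 3}$ with diagonal blocks $(\sigma_{11},\sigma_{22},\sigma_{33})=(\pi_1,\pi,\pi_3)$, the identity $\sigma(ab)=\sigma(a)\sigma(b)$ gives $\sigma_{ij}(ab)=\sum_{i\le k\le j}\sigma_{ik}(a)\sigma_{kj}(b)$. The $(i,j)$ block of $\sigma_z$ is $z^{j-i}\sigma_{ij}$, so the $(i,j)$ block of $\sigma_z(a)\sigma_z(b)$ is
\[
\sum_{k}z^{k-i}z^{j-k}\sigma_{ik}(a)\sigma_{kj}(b)=z^{j-i}\sigma_{ij}(ab),
\]
which is the $(i,j)$ block of $\sigma_z(ab)$. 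Hence $\sigma_z$ is a homomorphism for every $z\in\bbC$.

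\textbf{Step 2 (Boundary and Poisson averaging).} For $\theta\in\bbR$ consider the unitary $U_\theta:=\operatorname{diag}(e^{i\theta},1_H,e^{-i\theta})\in\B(K)$. A direct block computation yields $U_\theta\sigma(a)U_\theta^{\ast}=\sigma_{e^{i\theta}}(a)$, so $\sigma_z$ is unitarily equivalent to $\sigma$ (and hence u.c.c.) whenever $|z|=1$. For $r\in[0,1]$, recall the Poisson kernel $P_r(\theta)=\sum_{n\in\bbZ}r^{|n|}e^{in\theta}\ge 0$ with $\frac{1}{2\pi}\int_0^{2\pi}P_r(\theta)\,d\theta=1$ and $\frac{1}{2\pi}\int_0^{2\pi}e^{in\theta}P_r(\theta)\,d\theta=r^{|n|}$; I would then define the (SOT-)integral
\[
\rho_r(a)\ :=\ \frac{1}{2\pi}\int_0^{2\pi}U_\theta\,\sigma(a)\,U_\theta^{\ast}\,P_r(\theta)\,d\theta.
\]
Computing block-by-block and using the Fourier coefficients of $P_r$ identifies $\rho_r=\sigma_r$. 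As a positive weighted average (with weights summing to $1$) of the u.c.c.\ maps $U_\theta\sigma U_\theta^{\ast}$, the map $\rho_r$ is itself unital and completely contractive. Combined with Step 1, this shows that $\sigma_r$ is a u.c.c.\ homomorphism for every $r\in[0,1]$.

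\textbf{Step 3 (General $z$, continuity, special values).} For $z=re^{i\phi}\in\overline{\bbD}$, a quick block computation gives $\sigma_z=U_\phi\sigma_r U_\phi^{\ast}$, so $\sigma_z$ is unitarily equivalent to $\sigma_{|z|}$; in particular it is a u.c.c.\ homomorphism, and whenever $|z|=|w|$ both $\sigma_z$ and $\sigma_w$ are conjugate to $\sigma_{|z|}$ and so unitarily equivalent. The dilation relation $P_H\sigma_z(\cdot)|_H=\pi$ is immediate from the upper-triangular block form with $(2,2)$-entry $\pi$; the identities $\sigma_1=\sigma$ and $\sigma_0=\pi_1\oplus\pi\oplus\pi_3$ (a trivial dilation of $\pi$) are obvious, and norm continuity of $z\mapsto\sigma_z$ follows at once from the explicit formula together with the uniform bounds $\|\sigma_{ij}\|\le\|\sigma\|\le 1$. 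The only genuinely nontrivial point is the passage from $|z|=1$ to $|z|<1$; this is precisely what the Poisson-averaging trick in Step 2 circumvents.
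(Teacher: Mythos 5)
Your proof is correct, but it takes a genuinely different route from the paper's. The paper observes that for $z\neq 0$ the map $\sigma_z$ is the similarity $D_z^{-1}\sigma D_z$ with $D_z=\operatorname{diag}(1,z,z^2)$, deduces that $z\mapsto\sigma_z(x)$ is an entire operator-valued function for each $x\in M_n(\mathcal{A})$, and then gets complete contractivity on all of $\overline{\bbD}$ from the maximum modulus principle together with the unitary equivalence $\sigma_z\sim\sigma$ on the boundary $|z|=1$. You instead avoid analyticity entirely: after checking multiplicativity by a direct block computation (equivalent to the paper's similarity observation), you realize $\sigma_r$ for $0\le r<1$ as the Poisson average $\frac{1}{2\pi}\int_0^{2\pi}U_\theta\,\sigma(\cdot)\,U_\theta^{\ast}P_r(\theta)\,d\theta$ of the boundary representations $\sigma_{e^{i\theta}}=U_\theta\sigma(\cdot)U_\theta^{\ast}$, so that unitality and complete contractivity follow from convexity rather than from the maximum principle; the general case $z=re^{i\phi}$ then reduces to $\sigma_r$ by one more unitary conjugation. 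The two arguments are of course two faces of the same harmonic-analysis fact, but yours has the advantage of exhibiting $\sigma_z$ explicitly as a (completely positive-flavored) average of unitary conjugates of $\sigma$, and your diagonal unitary $U_\theta=\operatorname{diag}(e^{i\theta},1,e^{-i\theta})$ handles the unitary equivalence $\sigma_z\sim\sigma_w$ for $|z|=|w|$ uniformly, including $z=w=0$, whereas the paper's $U=\operatorname{diag}(1,zw^{-1},z^2w^{-2})$ needs $w\neq 0$. One pedantic point: the Poisson kernel $P_r$ is only a genuine $L^1$ density for $r<1$, so your Step 2 covers $r\in[0,1)$ and the case $r=1$ should be noted as being the hypothesis $\sigma_1=\sigma$ itself; this is cosmetic and does not affect the argument.
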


\begin{proof}
If $z=0$, then $\sigma_0$ is a direct sum of u.c.c. homomorphisms, which is still a u.c.c. homomorphism. And, $\sigma_0=\pi_1\oplus \pi\oplus \pi_2$ is a trivial dilation of $\pi$. Otherwise, suppose that $z\ne 0$. Then, the map $\sigma_z$ is obtained via the similarity
\begin{equation*}
   \sigma_z=\begin{pmatrix} 1& 0  & 0\\ 0  & z^{-1}  &0   \\ 0 & 0& z^{-2}\end{pmatrix}\begin{pmatrix}  \pi_1 & \sigma_{1,2}  & \sigma_{1,3}\\  0  & \pi  &\sigma_{2,3}\\  0   &   0   & \pi_3 \end{pmatrix}\begin{pmatrix} 1& 0  & 0\\ 0  & z  &0   \\ 0 & 0& z^2\end{pmatrix}.
  \end{equation*}
  This shows that the maps $\sigma_z$ are homomorphisms and additionally that the map 
  \begin{align*}
    \mathbb{C}&\to\mathcal{B}(K^n) \\  z&\mapsto \sigma_z(x)
  \end{align*}
is analytic for every $x\in M_n(\mathcal{A})$. So we can apply the maximum modulus principle, which yields that
  \begin{equation*}
    \sup_{z\in\overline{\mathbb{D}}}\|\sigma_z(x)\|=\sup_{z\in\mathbb{T}}\|\sigma_z(x)\|=\|\sigma_z(x)\|\le\|x\|
  \end{equation*}
for every $x\in M_n(\mathcal{A})$. Therefore $\sigma_z$ is u.c.c for all $z\in\overline{\mathbb{D}}$. 

Finally, if $|z|=|w|>0$, then $\sigma_w = U\sigma_w U^*$, where
\[
U = \begin{pmatrix}
    1 & 0 & 0 \\
    0 & zw^{-1} & 0 \\
    0 & 0 & z^2w^{-2}
\end{pmatrix}
\]
is unitary.
\end{proof}

Note that, more generally, for any nonzero $z,w\in \overline{\bbD}$, the representations $\sigma_z$ and $\sigma_w$ are similar, via the similarity $S=\diag(1,zw^{-1},z^2w^{-2})$.

\begin{example}\label{eg:Toeplitz_representation_twist_dilation}
Let $\pi:A(\bbD)\to C^\ast(S)$ be the completely isometric representation that sends the generator $f$ to the unilateral shift $S$. A maximal dilation of this representation sends $f$ to the sz. Nagy dilation of $S$ to a unitary
\[
U =
\begin{pmatrix}
    S & I-S S^\ast \\
    0 & S^\ast
\end{pmatrix}.
\]
The process in Lemma \ref{lem:twist_dilation} applied to this dilation (note that because $U$ extends $S$, we can take $H_1=0$ in Sarason's Lemma) corresponds to the family of completely isometric representations of $A(\bbD)$ that send $z$ to
\[
V_z =
\begin{pmatrix}
    S & z(I-SS^\ast) \\
    0 & S^\ast
\end{pmatrix}
\]
for $z\in \overline{\bbD}$. We will see that for $|z|\ne |w|$, the representations determined by $V_z$ and $V_w$ generate C*-covers that are not even comparable in the ordering of C*-covers, unless $|z|=1$ or $|w|=1$.

Suppose that there is a morphism of C*-covers $\pi:C^\ast(V_z)\to C^\ast(V_w)$, meaning a (unital) $\ast$-homomorphism that satisfies $\pi(V_z)=V_w$. We have
\begin{align*}
V_z^\ast V_z &=
\begin{pmatrix}
    I & 0 \\
    0 & |z|^2(I-SS^\ast) + SS^\ast
\end{pmatrix} =
\begin{pmatrix}
    I & 0 \\
    0 & I-SS^\ast
\end{pmatrix} +
|z|^2\begin{pmatrix}
    0 & 0 \\
    0 & SS^\ast
\end{pmatrix}.
\end{align*}
Since the two matrices on the right-hand-side are orthogonal projections summing to the identity, the spectrum is
\[
\sigma(V_z^\ast V_z) =
\{|z|^2,1\}.
\]
Identically, $\sigma(V_w^\ast V_w)=\{|w|^2,1\}$. Since a $\ast$-homomorphism shrinks spectra, we have
\[
\sigma(V_w^\ast V_w)=
\sigma(\pi(V_z^\ast V_z))\subseteq
\sigma(V_z^\ast V_z),
\]
and so $\{|w|^2,1\}\subseteq \{|z|^2,1\}$. Therefore, we must have $|w|=|z|$--in which case $V_z$ and $V_w$ are unitarily equivalent, or else $|w|=1$--in which case $C^\ast(V_w)=C^\ast(U)$ is the C*-envelope.
\end{example}

Example \ref{eg:Toeplitz_representation_twist_dilation} demonstrates that even though the definition of $\sigma_z$ in Lemma \ref{lem:twist_dilation} is a nice continuous map, the resulting C*-covers produced along the path from $z=0$ to $z=1$ can be ``badly discontinuous" with respect to the C*-cover ordering. In Example \ref{eg:Toeplitz_representation_twist_dilation}, the C*-covers produced start from $\sigma_0$, which generates a minimal C*-cover (the Toeplitz algebra), end at $\sigma_1$ giving the C*-envelope, but along the way the representations $\sigma_z$ for $0\le z< 1$ are all mutually incomparable in the ordering of the C*-cover lattice.

\begin{example}\label{eg:disk_algebra_twist_dilation}
Now, consider the completely isometric representation $\pi:A(\bbD)\to C(\overline{\bbD})$ given by inclusion. This is the universal C*-algebra generated by a normal contraction $f\in C(\overline{\bbD})$ with $f(z)=z$. A maximal dilation to a unitary is given by the Schaeffer dilation
\[
f\mapsto U :=
\begin{pmatrix}
    \ddots &\\
    &0 & 1 & \\
    && 0 &  \sqrt{1-|f|^2} & -\overline{f} \\  
    && 0 & f & \sqrt{1-|f|^2} & 0 \\
    && & & 0 & 1 \\
    &&&&&0&1\\
    &&&&&&&\ddots
\end{pmatrix} \in C(\overline{\bbD},B(\ell^2))
\]
For $z\in \overline{\bbD}$, the dilation produced by the process in Lemma \ref{lem:twist_dilation} sends the generator $f\in A(\bbD)$ to
\[
f \mapsto V_z :=
\begin{pmatrix}
    \ddots &\\
    &0 & 1 & \\
    && 0 &  z\sqrt{1-|f|^2} & -z^2\overline{f} \\  
    && 0 & f & z\sqrt{1-|f|^2} & 0 \\
    && & & 0 & 1 \\
    &&&&&0&1\\
    &&&&&&&\ddots
\end{pmatrix} \
\]
It is straightforward to check that when $|z|\ne 0$ and $|z|\ne 1$, the operator $V_z$ is not normal. Therefore, even though $C^\ast(V_0)=C(\overline{\bbD})$, and $C^\ast(V_1)=C(\bbT)$, for $z\in (0,1)$, none of the C*-covers $C^\ast(V_z)$ sit between $C(\overline{\bbD})$ and $C(\bbT)$ in the ordering of C*-covers.

\end{example}

It is time to proof the lattice dichotomy theorem.

\begin{theorem}
\label{thm:uncountable_lattice}
Assume that the operator algebra $A$ has more than one C*-cover, then the cardinality of the set of C*-covers is at least the continuum $\mathfrak{c}$.
\end{theorem}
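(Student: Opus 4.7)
The plan is to exhibit a continuum of pairwise inequivalent C*-covers using the twist construction of Lemma \ref{lem:twist_dilation}. Since $|\cstarlattice(A)| > 1$, Lemma \ref{lem:empty Shilov} together with Proposition \ref{Prop:Equi Lattice Ideals} gives that the Shilov ideal $I$ in $\Cmax(A)$ is nonzero, so the identity embedding $\iota : A \hookrightarrow \Cmax(A)$ is u.c.i. but not maximal as a u.c.c. homomorphism — its canonical extension $\widetilde{\iota} = \id$ has kernel $0 \ne I$, while maximality would force the kernel to be the Shilov ideal by Lemma \ref{lem: maximal u.c.c. and envelope}. Apply Proposition \ref{prop:existence max dilation} to dilate $\iota$ to a maximal u.c.i.\ homomorphism $\sigma : A \to B(K)$; non-maximality of $\iota$ guarantees that the Sarason form of $\sigma$ in Lemma \ref{lem:twist_dilation} has at least one nonzero off-diagonal block.

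Lemma \ref{lem:twist_dilation} then yields a norm-continuous family $\{\sigma_z\}_{z \in \overline{\bbD}}$ of u.c.i.\ dilations of $\iota$, each giving a C*-cover $(C^*(\sigma_z(A)),\sigma_z)$ with associated boundary ideal $J_z := \ker \widetilde{\sigma_z} \triangleleft \Cmax(A)$ via Proposition \ref{Prop:Equi Lattice Ideals}. By the unitary equivalence part of Lemma \ref{lem:twist_dilation}, $J_z$ depends only on $|z|$, so the family descends to a map $t \mapsto [\sigma_t]$ from $[0,1]$ into $\cstarlattice(A)$; the two endpoints hit the lattice extremes, $J_0 = 0$ (because $\sigma_0 = \pi_1 \oplus \iota \oplus \pi_3$ contains the identity on $\Cmax(A)$ as a direct summand, making $\widetilde{\sigma_0}$ injective) and $J_1 = I$ (because $\sigma_1 = \sigma$ is maximal and hence corresponds to $\Ce(A)$).

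To conclude, I plan to prove directly that $[\sigma_z] \ne [\sigma_w]$ whenever $|z| \ne |w|$ in $(0,1]$, which immediately yields $\mathfrak{c}$ pairwise inequivalent C*-covers. A $\ast$-isomorphism of covers preserves the spectrum of every element, so choosing $a \in A$ for which a Sarason off-diagonal block $\sigma_{ij}(a)$ is nonzero, I would compute the spectrum of $\sigma_z(a)^* \sigma_z(a)$ from the $3 \times 3$ triangular form of Lemma \ref{lem:twist_dilation}. This mirrors the Toeplitz computation of Example \ref{eg:Toeplitz_representation_twist_dilation} where $\spr(V_z^* V_z) = \{|z|^2, 1\}$: the diagonal blocks of $\sigma_z(a)^* \sigma_z(a)$ are polynomials in $|z|^2$ with operator coefficients, from which one extracts an invariant whose strict dependence on $|z|$ contradicts $|z| \ne |w|$. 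If the direct spectral argument proves delicate, a fallback is a Baire category reduction: the sets $G_n := \{t \in [0,1] : J_t \supseteq J^{(n)}\}$ are closed (each is an intersection of zero sets of the norm-continuous scalar functions $t \mapsto \|\widetilde{\sigma_t}(x)\|$ for $x \in J^{(n)}$, where norm continuity propagates from $A$ to $\Cmax(A)$ by density of the $*$-algebra generated by $A$ and uniform boundedness of the $\widetilde{\sigma_t}$), so under countability of $\cstarlattice(A)$ some $G_n$ would have nonempty interior, and iterating would locate a subinterval on which $t \mapsto J_t$ is constant — a conclusion the spectral calculation still has to rule out.

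The main obstacle is producing the spectral invariant in full generality. In the worked examples the off-diagonal block is a rank-one partial isometry and $|z|^2$ is manifestly an eigenvalue of $\sigma_z(a)^*\sigma_z(a)$; in an abstract non-selfadjoint $A$ the off-diagonal Sarason blocks of $\sigma$ are arbitrary, and extracting a spectral quantity that strictly depends on $|z|$ while being $\ast$-isomorphism invariant requires a careful choice of $a \in A$ (or a suitable polynomial in $\sigma_z(a), \sigma_z(a)^*$) and possibly a reduction via an auxiliary quotient of $C^*(\sigma_z(A))$. This is the heart of the argument.
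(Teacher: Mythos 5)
Your setup coincides with the paper's: non-maximality of the embedding $i_{\max}:A\to\Cmax(A)$, a maximal dilation $\sigma$, the twisted family $\sigma_z$ of Lemma \ref{lem:twist_dilation}, and the observation that $\ker\widetilde{\sigma_0}=0$ while $\ker\widetilde{\sigma_1}$ contains the (nonzero) Shilov ideal. The gap is in the step that actually produces $\mathfrak{c}$ distinct covers. You propose to show $[\sigma_z]\neq[\sigma_w]$ whenever $|z|\neq|w|$ by a spectral computation, and you concede that extracting the required spectral invariant in full generality is an unresolved obstacle. It is more than an obstacle: that claim is stronger than what is needed, and there is no reason it should hold for an arbitrary $A$ --- nothing prevents the assignment $|z|\mapsto[\sigma_z]$ from being constant on a subinterval, and the worked examples (where the off-diagonal block is essentially a rank-one partial isometry) give no traction on general Sarason blocks. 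Your Baire-category fallback has two further problems: it still defers to the same unproven spectral calculation to rule out local constancy of $t\mapsto J_t$, and even if completed it would only show the lattice is uncountable, not that its cardinality is at least $\mathfrak{c}$ (uncountable does not imply cardinality $\geq\mathfrak{c}$ without the continuum hypothesis).

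The missing idea --- and it is assembled from ingredients you already wrote down in the fallback paragraph --- is to use the scalar function $\delta_x(z)=\|\widetilde{\sigma_z}(x)\|$, for a single fixed nonzero $x$ in the Shilov ideal, as the distinguishing invariant, rather than trying to make $z\mapsto[\sigma_z]$ injective in $|z|$. You already observe that $z\mapsto\|\widetilde{\sigma_z}(y)\|$ is continuous for every $y\in\Cmax(A)$ (continuity on $i_{\max}(A)$ propagates to the generated $*$-algebra and then, by density and uniform contractivity, to all of $\Cmax(A)$). Since $\delta_x(0)=\|x\|>0$ (because $\widetilde{\sigma_0}$ contains the identity representation of $\Cmax(A)$ as a direct summand) and $\delta_x(1)=0$ (because $x\in I=\bigcap\ker\widetilde{\rho}$ over maximal $\rho$, by Lemma \ref{lem:intersection shilov ideal}), the Intermediate Value Theorem produces, for every $t\in[0,\|x\|]$, a parameter $z_t$ with $\delta_x(z_t)=t$. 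If two of the resulting covers were equivalent, uniqueness of extensions to $\Cmax(A)$ would force the implementing $*$-isomorphism to intertwine $\widetilde{\sigma_{z_s}}$ and $\widetilde{\sigma_{z_t}}$, and since $*$-isomorphisms are isometric this gives $s=\delta_x(z_s)=\delta_x(z_t)=t$. That yields $\mathfrak{c}$ pairwise inequivalent covers with no spectral analysis at all, and is exactly how the paper closes the argument.
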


\begin{proof}
Let $(C^*_{max}(A),i_{max})$ be the maximal $C^*$-cover of $A$, and $(C^*_e(A),i_e)$ the $C^*$-envelope. For every u.c.c. homomorphism $\pi:A\to\mathcal{B}(H)$, we will denote the unique unital $*$-homomorphism obtain by the universal property, see Lemma \ref{lem:uni property}, of $C^*_{max}(A)$ by $\tilde{\pi}$, i.e. a unital  $*$-homomorphism $\tilde{\pi}:C^*_{max}(A)\to\mathcal{B}(H)$ such that $\tilde{\pi}\circ i_{max}=\pi$.

Since $(\Cmax(A),i_{\max})\neq (C^*_e(A),i_e)$, the map $i_{\max}$ is not maximal by Lemma \ref{lem: maximal u.c.c. and envelope}, and so there exists a maximal dilation $\sigma$ of $i_{\max}$ by Proposition \ref{prop:existence max dilation}. Applying Sarason's lemma \cite{Sarason}, we obtain that 
\begin{equation*}
    \sigma=\begin{pmatrix}  \pi_1 & \sigma_{1,2}  & \sigma_{1,3}\\  0  & i_{\max}  &\sigma_{2,3}\\  0   &   0   & \pi_3 \end{pmatrix}.
  \end{equation*}
Let $\sigma_z$ the u.c.c. homomorphisms from Lemma \ref{lem:twist_dilation}. Because $\pi$ is completely isometric, so is each $\sigma_z$. Recall that by Lemma \ref{lem:intersection shilov ideal} the Shilov ideal of $A$ seen as a subalgebra of $\Cmax(A)$ is given by
  \begin{equation*}
    I=\bigcap\{\ker(\tilde{\rho})\,\mid\, \rho: A\to\mathcal{B}(H)\ \textup{u.c.c. and maximal} \},
  \end{equation*}
which is not trivial because $ A$ has more than one C*-cover and Lemma \ref{lem:empty Shilov}. Therefore there exists a $x\in I$ with $x\neq0$. By choice, $\sigma=\sigma_1$ is maximal and thus
  \begin{equation*}
    \tilde{\sigma}_1(x)=\tilde{\sigma}(x)=0.
  \end{equation*}
But $\tilde{i}_{\max}$ is an isometric representation of $\Cmax( A)$, so $\ker(\tilde{i}_{\max})=\{0\}$ and since $\sigma_0$ is a trivial dilation of $i_{\max}$, we obtain
\begin{equation*}
\tilde{\sigma}_0(x)\neq 0.
\end{equation*}
Without loss of generality, by re-scaling $x$ if necessary, assume that $\|\tilde{\sigma}_0(x)\|=1$. The function $\delta_y:\overline{\mathbb{D}}\to\mathbb{R}, z\mapsto \|\tilde{\sigma}_z(y)\|$ is continuous for every $y\in i_{\max}( A)$. Since $\tilde{\sigma}_z$ is a *-homomorphism, we obtain that $\delta_y$ is continuous for every $y$ in the *-algebra generated by $i_{\max}( A)$. However, contractivity of $\tilde{\sigma}_z$ and the triangle inequality imply that
   \begin{equation*}
      |\delta_y(z)-\delta_y(\omega)|\le 2\|y-v\|+|\delta_{v}(z)-\delta_{v}(\omega)|
  \end{equation*}
for every $y, v\in \Cmax( A)$. Thus $\delta_y$ is continuous for every $y\in \Cmax( A)$.

We have $\|\delta_x(0)\|=\|\tilde{\sigma}_0(x)\|=1$ and $\|\delta_x(1)\|=\|\tilde{\sigma}_1(x)\|=0$. Hence, by the Intermediate Value Theorem, for each $t\in [0,1]$, there exists $z_t\in [0,1]$ satisfying $\delta_x(z_t)=\|\tilde{\sigma}_{z_t}(x)\|=t$. We will show that the C*-covers $(C^\ast(\sigma_{z_t}(A)),\sigma_{z_t})$ are pairwise non-isomorphic for each $t\in [0,1]$.

So, let $s,t\in [0,1]$ and suppose that there is a $\ast$-isomorphism $\rho:C^\ast(\sigma_{z_s}(A))\to C^\ast(\sigma_{z_t}(A))$ satisfying $\rho\sigma_{z_s} = \sigma_{z_t}$. By uniqueness of the extensions of the homomorphisms involved to $C^\ast_{\max}(A)$, we have $\rho\tilde{\sigma}_{z_s} = \tilde{\sigma}_{z_t}$. So,
\[
s = 
\|\tilde{\sigma}_{z_s}(x)\| =
\|\rho\tilde{\sigma}_{z_s}(x)\| = 
\|\tilde{\sigma}_{z_t}(x)\| = t,
\]
so $s=t$. Contrapositively, this shows that for $s\ne t$ in $[0,1]$, the C*-covers $C^\ast(\sigma_{z_s}(A))$ are non-isomorphic.
\end{proof}

\begin{remark}
    The proof of the above theorem yields more than a mere bound on the cardinality of $\cstarlattice(A)$. Equip $\cstarlattice(A)$ with the smallest topology such that, for every $a\in C^*_{max}(A)$, the map
      \begin{align*}
        \cstarlattice(A)&\to\mathbb{R}, \\ [\mathcal{A},\pi]&\mapsto \|\tilde\pi(a)\|
      \end{align*}
    is continuous. With this topology $\cstarlattice(A)$ is Hausdorff.
    
    Given a $C^*$-cover $(\mathcal{A},\pi)$ of $A$, let $(\sigma_z)_{z\in\overline{\mathbb{D}}}$ be the u.c.i. homomorphisms from the proof of Theorem \ref{thm:uncountable_lattice}. Define
    \begin{align*}
      p:[0,1]&\to \cstarlattice(A), \\t &\mapsto [C^*((\sigma_t\oplus\pi)(A), \sigma_t\oplus \pi].
    \end{align*}
    Then $p$ is continuous with $p(0)=[C^*_{max}(A),i_{max}]$ and $p(1)=[\mathcal{A},\pi]$. Since $(\mathcal{A},\pi)$ was arbitrary, $\cstarlattice(A)$ is path-connected.
    
    To deduce the stated cardinality conclusion for $\cstarlattice(A)$, it suffices to observe that a path-connected Hausdorff space either consists of a single point or has cardinality at least $\mathfrak{c}$. This fact is well-known, for completeness we sketch a proof.
    
    Assume that a path-connected Hausdorff space $X$ has cardinality strictly greater than $1$. Then there exists a path $\gamma$ with $\gamma(0)\neq\gamma(1)$. After verifying that $\gamma([0,1])$ is a \textit{Peano space}, i.e., compact, connected, locally connected and metrizable, \cite[Theorem 31.2]{Willard} implies that $\gamma([0,1])$ is \textit{arc-connected}, i.e., for every $x\neq y\in\gamma([0,1])$ there is a injective path $\tilde{\gamma}:[0,1]\to\gamma([0,1])$ with $\tilde{\gamma}(0)=x$ and $\tilde{\gamma}(1)=y$. Since $1<|\gamma([0,1])|$, there exists such an injective path $\tilde{\gamma}$. Therefore
      \[
        \mathfrak{c}=|[0,1]|\le|\gamma([0,1])|\le|X|.
      \]
    
\end{remark}

\begin{theorem}\label{thm:maintheorem}
Let $A$ be a separable operator algebra. Then
    \[
      |C^*\textup{-Lat}(A)|=\begin{cases} 1 & or\\ \mathfrak{c}. & \\ \end{cases}
  \]
\end{theorem}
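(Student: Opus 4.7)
The plan is to combine Theorem \ref{thm:uncountable_lattice} (which has already been established for arbitrary operator algebras) with a separability-based upper bound. Theorem \ref{thm:uncountable_lattice} gives $|\cstarlattice(A)| \geq \mathfrak{c}$ whenever $A$ has more than one C*-cover, so what remains is to produce the matching upper bound $|\cstarlattice(A)| \leq \mathfrak{c}$ under the separability hypothesis.

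First I would observe that if $A$ is separable, then the maximal C*-cover $C^*_{\max}(A)$ is a separable C*-algebra. Indeed, $i_{\max}:A\to C^*_{\max}(A)$ is completely isometric, hence $i_{\max}(A)$ is norm-separable, and $C^*_{\max}(A) = C^*(i_{\max}(A))$ is generated as a C*-algebra by a separable subset; the $*$-algebra generated by a countable dense subset is countable up to rational-linear combinations, and its norm closure is separable.

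Next, a separable metric space has at most $\mathfrak{c}$ closed subsets, since each closed subset is determined by a countable dense subset of itself, and there are at most $\mathfrak{c}^{\aleph_0}=\mathfrak{c}$ countable sequences in a space of cardinality at most $\mathfrak{c}$. Applying this to $C^*_{\max}(A)$, the collection of closed two-sided ideals of $C^*_{\max}(A)$ has cardinality at most $\mathfrak{c}$, and so in particular the set of boundary ideals for $i_{\max}(A)$ in $C^*_{\max}(A)$ has cardinality at most $\mathfrak{c}$.

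By Proposition \ref{Prop:Equi Lattice Ideals}, $\cstarlattice(A)$ is in order-reversing bijection with the lattice of boundary ideals in $C^*_{\max}(A)$, so $|\cstarlattice(A)|\leq \mathfrak{c}$. Combined with Theorem \ref{thm:uncountable_lattice}, this yields the stated dichotomy: either $|\cstarlattice(A)|=1$, or $\mathfrak{c}\leq|\cstarlattice(A)|\leq \mathfrak{c}$, hence $|\cstarlattice(A)|=\mathfrak{c}$. The only step requiring care is the passage from separability of $A$ to separability of $C^*_{\max}(A)$; once that is in hand the argument is immediate from what has already been proved.
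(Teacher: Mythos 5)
Your proposal is correct and follows essentially the same route as the paper's proof: the lower bound comes from Theorem \ref{thm:uncountable_lattice}, and the upper bound comes from the correspondence between C*-covers and closed (boundary) ideals in the separable C*-algebra $C^*_{\max}(A)$, which has at most $\mathfrak{c}$ closed subsets. Your write-up supplies slightly more detail than the paper on why $C^*_{\max}(A)$ is separable and why a separable metric space has at most $\mathfrak{c}$ closed sets, but the argument is the same.
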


\begin{proof}
We know that the cardinality is at least $\mathfrak{c}$. To see that it is not bigger than $\mathfrak{c}$, note that every C*-cover correspond to a norm closed ideal in $\Cmax(\mathcal{A})$. But since $\mathcal{A}$ is separable, $\Cmax(\mathcal{A})$ is also separable. Thus $\Cmax(\mathcal{A})$ is a second countable space with respect to the norm topology and therefore the cardinality of the family of all closed sets in $\Cmax(\mathcal{A})$ is at most $\mathfrak{c}$.
\end{proof}

After examining Examples \ref{eg:Toeplitz_representation_twist_dilation} and \ref{eg:disk_algebra_twist_dilation}, one might expect that the $C^*$-covers obtained in Theorem \ref{thm:uncountable_lattice} form an anti-chain, i.e. are pairwise non-comparable. However, the next example shows that they can even form a decreasing chain.

\begin{example}\label{eg:T2_maximal_representation_twist}
    By \cite[Example 2.4]{Blech}, the maximal $C^*$-cover of $T_2$ is given by 
      \begin{equation*}
        \{f\in C([0,1],M_2) \mid f(0) \textup{ is diagonal}\}
      \end{equation*}
    with the embedding
      \begin{equation*}
        i_{max}:\left(\begin{pmatrix} 
        a& b\\ 
        0 &d 
        \end{pmatrix}\right)\mapsto\begin{pmatrix} 
        a & b\sqrt{t}\\ 
        0 & c 
        \end{pmatrix},
      \end{equation*}
  where $t:[0,1]\to[0,1], x\mapsto x$. 
  
  A maximal dilation of $i_{max}$ is given by
  \begin{equation*}
    \begin{pmatrix} 
    a& b\\ 
    0 &d 
    \end{pmatrix} \mapsto\begin{pmatrix} 
    a & 0  & b\sqrt{1-t}  & -b\sqrt{t}           \\
    0  & a & b\sqrt{t}             & b\sqrt{1-t} \\ 
    0  & 0  & c             & 0             \\
    0  & 0  & 0              & c
    \end{pmatrix},
  \end{equation*}
  since 
  \begin{equation*}
  \sigma\left(\begin{pmatrix} 
        \sqrt{1-t} & -\sqrt{t}\\ 
        \sqrt{t} & \sqrt{1-t} 
        \end{pmatrix}\right)\subset\mathbb{T}.
  \end{equation*}
  For $s\in[0,1]$, define
 \begin{equation*}
  A_s=\begin{pmatrix} 
        s\sqrt{1-t} & -s^2\sqrt{t}\\ 
        \sqrt{t} & s\sqrt{1-t} 
        \end{pmatrix}.
  \end{equation*}
  The u.c.c. homomorphisms produced in Lemma \ref{lem:twist_dilation} are given by
  \begin{equation*}
    \begin{pmatrix} 
    a& b\\ 
    0 &d 
    \end{pmatrix}\mapsto\begin{pmatrix} 
    a1 & bA_s \\ 
    0   & c1
    \end{pmatrix}.
  \end{equation*}
Unitary conjugation shows that the $C^*$-cover isomorphism class induced by this u.c.c. homomorphism solely depends on the spectrum of $|A_s|$ if $A_s$ is invertible, and $\sigma(|A_s|)\cup\{0\}$ if $A_s$ is not invertible. It holds that
  \begin{equation*}
      A_s^*A_s=\begin{pmatrix} 
    s^2(1-t)+t   & (1-s^2)s\sqrt{t}\sqrt{1-t}\\ 
    (1-s^2)s\sqrt{t}\sqrt{1-t} & s^4t+s^2(1-t)
    \end{pmatrix}=s^21+(1-s^2)\sqrt{t}\begin{pmatrix} 
    \sqrt{t}             & s\sqrt{1-t}\\ 
    s\sqrt{1-t} & -s^2\sqrt{t}
    \end{pmatrix},
  \end{equation*}
and
 \begin{equation*}
      \sigma\left(\begin{pmatrix} 
    \sqrt{t}             & s\sqrt{1-t}\\ 
    s\sqrt{1-t} & -s^2\sqrt{t} 
    \end{pmatrix}\right)=\{\tfrac{1}{2}(\sqrt{x}(1-s^2)\pm\sqrt{x(s^2-1)^2+4s^2}) \mid x\in[0,1]\}.
  \end{equation*}
Hence
\begin{equation*}
      \sigma(|A_s|)=\left\{\sqrt{s^2+\tfrac{1}{2}(1-s^2)(x(1-s^2)\pm \sqrt{x}\sqrt{x(s^2-1)^2+4s^2})} \mid x\in[0,1]\right\}.
  \end{equation*}
Since the functions on the right side are continuous and agree for $x=0$, we obtain that the spectrum is an interval, and evaluating at $x=1$ shows that
\begin{equation*}
      [s^2,1]\subset\sigma(|A_s|).
  \end{equation*}
To see that we already have equality, observe that
 \begin{equation}
  A_s=\begin{pmatrix} 
        s & 0\\ 
        0 & 1
        \end{pmatrix}\begin{pmatrix} 
        \sqrt{1-t} & -\sqrt{t}\\ 
        \sqrt{t}            & \sqrt{1-t} 
        \end{pmatrix}\begin{pmatrix} 
        1 & 0\\ 
        0 & s
        \end{pmatrix}.
        \label{SpectrumMatrix}
  \end{equation}
Since the operator in the middle is unitary, we obtain that
 \begin{equation*}
      s^41\le A_s^*A_s,
 \end{equation*}
and therefore 
\begin{equation*}
      [s^2,1]=\sigma(|A_s|).
  \end{equation*}
So $\sigma(|A_s|)\subset\sigma(|A_u|)$ for $0\le u<s\le1$, and, by \eqref{SpectrumMatrix}, $A_s$ is invertible for every $s\in(0,1]$. Thus the $C^*$-covers obtained in Theorem \ref{thm:uncountable_lattice} form a decreasing chain.
\end{example}

This observation raises the following question:
\begin{question}
If $A$ is an operator algebra with more than one C*-cover, then is the C*-cover lattice of $A$ necessarily not totally ordered?
\end{question}

Note that in Example \ref{eg:T2_maximal_representation_twist}, the dilations formed via the process of Lemma \ref{lem:twist_dilation} were totally ordered. However, it is not true that the \emph{whole} C*-cover lattice of $T_2$ is totally ordered. Indeed, the Shilov ideal in $C^\ast_{\max}(T_2) = \{f \in C([0,1],M_2) \mid f(0) \text{ diagonal}\}$ is
\[
C_0([0,1),M_2)=\{f \in C([0,1],M_2) \mid f(1)=0\}.
\]
This has many non-totally ordered sub-ideals, which are boundary ideals for $T_2$ corresponding to the C*-covers of $T_2$, which are therefore not totally ordered.

\begin{remark}
One can also use the theory developed in this section to prove that any element in the Shilov ideal in $C^*_{\max}(A)$ for $A$ has to have a connected spectrum. Indeed, let $x$ be in the Shilov ideal and assume that $\sigma(x)$ equals the disjoint union of closed sets $K_1$ and $K_2$, both non-empty. Since the ideal does not contain $1$, we have $0\in\sigma(x)$ an without loss of generality we may assume that $0\in K_1$.

Then the proof of Theorem \ref{thm:uncountable_lattice} shows that there exists a Hilbert space $H$ and a family of unital homomorphisms $\pi_t:C^*_{\max}(A)\to\mathcal{B}(H)$, $t\in[0,1]$, such that $\pi_1(x)=0$, $\pi_0$ is injective, and 
  \[
    t\mapsto \|\pi_t(y)\|
  \]
is continuous for every $y\in C^*_{\max}(A)$.

Let $U_1, U_2$ be open disjoint sets in $\mathbb{C}$ such that $K_i\subset U_i, i=1,2$. Let $f:U_1\cup U_2\to \bbC$ be the holomorphic function with $f(U_1)=0$ and $f(U_2)=1$. Then
  \[
    f(\pi_t(x))=\pi_t(f(x))
  \]
for all $t\in[0,1]$ and, in particular,
  \[
    \pi_1(f(x))=f(\pi_1(x))=f(0)=0.
  \]
Note that we have $f(x)^2=f(x)$, and hence, for all $t\in[0,1]$, $\pi_t(f(x))^2=\pi_t(f(x))$. Moreover, if $\|\pi_t(f(x))\|<1$, then $\pi_t(f(x))=0$. Thus 
  \[
    \{t\in[0,1]:\ \|\pi_t(f(x))\|<1\}=\{t\in[0,1]:\ \pi_t(f(x))=0\},
  \]
and in particular these sets are open and closed. Since they contain $1$, we see that
  \[
    [0,1]=\{t\in[0,1];\ \pi_t(f(x))=0\},
  \]
which leads to the contradiction
  \[
    \{0,1\}=\sigma(f(x))=\sigma(\pi_0(f(x)))=\{0\}.
  \]
  
\end{remark}

\section{An example with no immediate successor to the C*-envelope}\label{Section:No successor to envelope}

For any C*-algebra $\C$ consider $T_{2}(\C)$ the $2\times 2$ upper triangular matrices with entries in $\C$.
It is immediate that $T_2(\C)$ is a Dirichlet operator algebra since $T_2(\C) + T_2(\C)^* = M_2(\C)$. By \cite[Lemma 5.2]{KatRamMem} the only Dirichlet representation is the C*-envelope, and so $C^*_e(T_2(\C)) = M_2(\C)$. 

Suppose $\varphi$ is a completely isometric representation of $T_2(\C)$. Then it must be of the form
\[
\varphi\left(\left[\begin{matrix} a & b \\ 0 & d\end{matrix}\right]\right) \ \ = \ \ \left[\begin{matrix} \varphi_{11}(a) & \varphi_{12}(b) \\ 0 & \varphi_{22}(d)\end{matrix}\right]
\]
where $\varphi_{11}, \varphi_{22}$ are $*$-isomorphisms of $\C$ and $\varphi_{12}$ is a completely isometric linear map of $\C$ with
\[
\varphi_{11}(a)\varphi_{12}(b)\varphi_{22}(d) = \varphi_{12}(abd)\,.
\]

\begin{proposition}\label{prop:ideal}
Suppose that $\C$ is a C*-algebra, and $\varphi$ is a completely isometric representation of $T_2(\C)$. 
If $\pi : C^*(\varphi(T_2(\C))) \rightarrow M_2(\C)$ is the unique surjective $*$-homomorphism such that $\pi(\varphi(A)) = A$, then $\ker \pi = \I$ where
\[
\I = \Big\langle (\varphi_{12}(c_1)^*\varphi_{12}(c_2) - \varphi_{22}(c_1^*c_2))E_{22}, \ (\varphi_{12}(c_1)\varphi_{12}(c_2)^* - \varphi_{11}(c_1c_2^*))E_{11} \ : \ c_1,c_2\in\C \Big\rangle\,.
\]
\end{proposition}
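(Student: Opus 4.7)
The strategy is to prove $\I\subseteq\ker\pi$ (immediate) and $\ker\pi\subseteq\I$ (the main content) by constructing a $\ast$-isomorphism $\tau:M_2(\C)\to \B/\I$ inverse to the induced map $\bar\pi:\B/\I\to M_2(\C)$, where I abbreviate $\B := C^\ast(\varphi(T_2(\C)))$. The first inclusion is a direct computation: since $\pi$ is a $\ast$-homomorphism with $\pi\circ\varphi=\id_{T_2(\C)}$,
\[
\pi(\varphi_{12}(c_1)^\ast\varphi_{12}(c_2) E_{22}) = (c_1E_{12})^\ast(c_2E_{12}) = c_1^\ast c_2 E_{22} = \pi(\varphi_{22}(c_1^\ast c_2)E_{22}),
\]
and similarly $\pi(\varphi_{12}(c_1)\varphi_{12}(c_2)^\ast E_{11}) = c_1c_2^\ast E_{11} = \pi(\varphi_{11}(c_1c_2^\ast)E_{11})$, so both generating families of $\I$ lie in $\ker\pi$, and $\pi$ factors through a $\ast$-homomorphism $\bar\pi:\B/\I\to M_2(\C)$.

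\vskip 6pt

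The main step is to define $\tau:M_2(\C)\to\B/\I$ by linear extension from the four corners
\begin{align*}
\tau(aE_{11}) &:= \varphi_{11}(a)E_{11}+\I, & \tau(bE_{12}) &:= \varphi_{12}(b)E_{12}+\I,\\
\tau(cE_{21}) &:= \varphi_{12}(c^\ast)^\ast E_{21}+\I, & \tau(dE_{22}) &:= \varphi_{22}(d)E_{22}+\I,
\end{align*}
and verify that $\tau$ is a $\ast$-homomorphism. Linearity and $\ast$-preservation are immediate. Multiplicativity amounts to showing $\tau(xE_{ij})\tau(yE_{jl})=\tau(xyE_{il})$ in each of the nine matrix-unit cases; most follow directly from $\varphi$ being a homomorphism, e.g.\ $\varphi_{11}(x)\varphi_{12}(y)=\varphi_{12}(xy)$ and its $\ast$-adjoint. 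The only cases that truly invoke the ideal relations are
\[
\tau(aE_{12})\tau(bE_{21}) = \varphi_{12}(a)\varphi_{12}(b^\ast)^\ast E_{11}+\I \equiv \varphi_{11}(ab)E_{11}+\I = \tau(abE_{11}),
\]
applying the second generating family with $c_1=a,\,c_2=b^\ast$, and the analogous identity $\tau(aE_{21})\tau(bE_{12}) = \tau(abE_{22})$ via the first generating family.

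\vskip 6pt

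Once $\tau$ is known to be a $\ast$-homomorphism, it is surjective, since its image contains the generating set $\{\varphi_{11}(a)E_{11},\ \varphi_{12}(b)E_{12},\ \varphi_{12}(b)^\ast E_{21},\ \varphi_{22}(d)E_{22}\}$ of $\B$ modulo $\I$, and a direct check on the four corners shows $\bar\pi\circ\tau=\id_{M_2(\C)}$. Hence $\tau$ has a left inverse, so $\tau$ is a $\ast$-isomorphism, forcing $\bar\pi=\tau^{-1}$ to be an isomorphism and thus $\ker\bar\pi=0$, i.e.\ $\ker\pi=\I$. The only real obstacle is the bookkeeping in verifying multiplicativity of $\tau$: the convention $\tau(cE_{21})=\varphi_{12}(c^\ast)^\ast E_{21}$ must be chosen precisely so that the products $\varphi_{12}(c_1)\varphi_{12}(c_2)^\ast$ and $\varphi_{12}(c_1)^\ast\varphi_{12}(c_2)$ defining $\I$ align with the $E_{12}E_{21}$ and $E_{21}E_{12}$ compositions.
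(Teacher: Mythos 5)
Your proof is correct, and while the easy inclusion $\I\subseteq\ker\pi$ is the same direct computation as in the paper, your argument for the reverse inclusion takes a genuinely different route. The paper shows that the quotient representation $\pi_\I\circ\varphi$ is both semi-Dirichlet and semi-Dirichlet$^*$, hence Dirichlet, and then invokes the uniqueness of the Dirichlet representation of $T_2(\C)$ (via \cite[Lemma 5.2]{KatRamMem}) to conclude that the quotient is already the C*-envelope $M_2(\C)$, so $\ker\pi_\I=\ker\pi$. You instead build an explicit $\ast$-homomorphism $\tau:M_2(\C)\to C^\ast(\varphi(T_2(\C)))/\I$ splitting $\bar\pi$, with the two mixed products $E_{12}E_{21}$ and $E_{21}E_{12}$ absorbed exactly by the two generating families of $\I$; your convention $\tau(cE_{21})=\varphi_{12}(c^\ast)^\ast E_{21}$ is the right one, and the remaining nonzero matrix-unit cases reduce to the module identities $\varphi_{11}(a)\varphi_{12}(b)=\varphi_{12}(ab)$ and $\varphi_{12}(b)\varphi_{22}(d)=\varphi_{12}(bd)$ together with their adjoints, while the vanishing cases are automatic from the block positions. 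Your approach is more elementary and self-contained: it avoids the Dirichlet machinery entirely and exhibits $\I$ as precisely the set of relations needed to collapse the cover onto $M_2(\C)$, at the cost of the case-by-case bookkeeping; the paper's semi-Dirichlet computation is essentially the same corner algebra packaged so that an existing uniqueness lemma finishes the job.
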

\begin{proof}
Certainly, for all $c_1,c_2 \in \C$ we have that
\begin{align*}
\pi\Big((\varphi_{12}(c_1)^*\varphi_{12}(c_2) - \varphi_{22}(c_1^*c_2))E_{22}\Big)
& = \pi\circ \varphi\left(\left[\begin{matrix} 0 & c_1 \\ 0 & 0\end{matrix}\right] \right)^*\pi\circ \varphi\left(\left[\begin{matrix} 0 & c_2 \\ 0 & 0\end{matrix}\right] \right) - \pi\circ \varphi\left(\left[\begin{matrix} 0 & 0 \\ 0 & c_1^*c_2\end{matrix}\right] \right)
\\ & = \left[\begin{matrix} 0 & 0 \\ c_1^* & 0\end{matrix}\right]\left[\begin{matrix} 0 & c_2 \\ 0 & 0\end{matrix}\right] - \left[\begin{matrix} 0 & 0 \\ 0 & c_1^*c_2\end{matrix}\right] = 0
\end{align*}
and $\pi\Big((\varphi_{12}(c_1)\varphi_{12}(c_2)^* - \varphi_{11}(c_1c_2^*))E_{11} \Big) = 0$ similarly.
Thus, $\I \subseteq \ker \pi$.

Now, let $\pi_\I : C^*(\varphi(T_2(\C))) \rightarrow C^*(\varphi(T_2(\C)))/\I$ be the quotient map. Then $\pi_\I \circ \varphi$ is still a completely isometric representation of $T_2(\C)$. Now we have that
\begin{align*}
& \pi_\I\circ\varphi\left(\left[\begin{matrix} a_1 & b_1 \\ 0 & d_1\end{matrix}\right]\right)^*\pi_\I\circ\varphi\left(\left[\begin{matrix} a_2 & b_2 \\ 0 & d_2\end{matrix}\right]\right)
\\& \quad \quad = \pi_\I\left(\left[\begin{matrix} \varphi_{11}(a_1^*) & 0 \\ \varphi_{12}(b_1)^* & \varphi_{22}(d_1^*)\end{matrix}\right]\left[\begin{matrix} \varphi_{11}(a_2) & \varphi_{12}(b_2) \\ 0 & \varphi_{22}(d_2)\end{matrix}\right] \right)
\\ & \quad \quad = \pi_\I\left( \left[\begin{matrix} \varphi_{11}(a_1^*a_2) & \varphi_{12}(a_1^*b_2) \\ \varphi_{12}(a_2^*b_1)^* & \varphi_{12}(b_1)^*\varphi_{12}(b_2) + \varphi_{22}(d_1^*d_2)\end{matrix}\right]\right)
\\ & \quad \quad = \pi_\I\left( \left[\begin{matrix} \varphi_{11}(a_1^*a_2) & \varphi_{12}(a_1^*b_2) \\ \varphi_{12}(a_2^*b_1)^* & \varphi_{22}(b_1^*b_2 + d_1^*d_2)\end{matrix}\right]\right) + \pi_\I((\varphi_{12}(b_1)^*\varphi_{12}(b_2) - \varphi_{22}(b_1^*b_2))E_{22})
\\ & \quad \quad = \pi_\I\circ\varphi\left( \left[\begin{matrix} a_1^*a_2 & a_1^*b_2 \\ 0 & b_1^*b_2 + d_1^*d_2\end{matrix}\right]\right) + \pi_\I\circ\varphi\left( \left[\begin{matrix}0 & a_2^*b_1 \\ 0 & 0\end{matrix}\right]\right)^*
\\ & \quad \quad \in \pi_\I\circ\varphi(T_2(\C)) + \pi_\I\circ\varphi(T_2(\C))^*\,.
\end{align*}
Hence, $\pi_\I\circ\varphi$ is a semi-Dirichlet completely isometric representation. In exactly the same way one can prove that $\pi_\I\circ\varphi$ is also a semi-Dirichlet$^*$ representation. 
Thus, $\pi_\I\circ\varphi$ is a Dirichlet representation, or equivalently that $\overline{\pi_\I\circ\varphi(T_2(\C)) + \pi_\I\circ\varphi(T_2(\C))^*}$ is a C*-algebra. 

Therefore, since as mentioned above there is only one unique Dirichlet representation, $C^*(\pi_\I\circ\varphi(T_2(\C)))$ and $M_2(\C)$ are isomorphic as C$^*$-covers and so $\ker \pi = \ker \pi_\I = \I$.
\end{proof}

Gramsch \cite{Gramsch} and Luft \cite{Luft} independently identified the ideal structure of $\B(H)$ in the non-separable case. To this end, for any cardinal number $\omega$, define $t\in \B(H)$ to be \textbf{$\omega$-compact} if each closed linear subspace of $t(H)$ has dimension strictly smaller than $\omega$. Denote the collection of all $\omega$-compact operators of $\B(H)$ by $K_\omega(H)$, which is a closed two-sided ideal of $\B(H)$. Gramsch and Luft proved that these are the only closed two-sided ideals and so they form a chain.

Consider now the operator algebra $A = T_2(K_\omega(H))$ where $\dim(H) = \omega$ is a limit cardinal strictly bigger than $\aleph_0$. Note that $\B(H) = K_{\omega+1}(H)$. The significant property of $K_\omega(H)$ is that it has no maximal closed ideals.

\begin{proposition}
Suppose $\varphi$ is a completely isometric representation of $T_2(K_\omega(H))$ that does not generate the C$^*$-envelope as a C$^*$-cover. 
If $\pi : C^*(\varphi(T_2(K_\omega(H)))) \rightarrow M_2(K_\omega(H))$ is the unique surjective $*$-homomorphism such that $\pi(\varphi(A)) = A$ and a cardinal $\aleph_0 < \mu < \omega$, then 
\[
\I_\mu = \Big\langle (\varphi_{12}(p)^*\varphi_{12}(p) - \varphi_{22}(p))E_{22}, \ (\varphi_{12}(p)\varphi_{12}(p)^* - \varphi_{11}(p))E_{11} \ : \ p\in K_\mu(H) \textrm{ projection} \Big\rangle
\]
is an ideal of, but not equal to, $\ker \pi$.
\end{proposition}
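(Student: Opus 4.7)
My plan splits the verification into three parts. Items (i) $\I_\mu$ is a closed two-sided ideal and (ii) $\I_\mu\subseteq\ker\pi$ are routine. For (i), $\I_\mu$ is an ideal by its very definition as the closed two-sided ideal generated by the listed elements. For (ii), the exact matrix calculation in the proof of Proposition~\ref{prop:ideal}, specialized to $c_1=c_2=p$, shows each generator of $\I_\mu$ maps to zero under $\pi$; since $\ker\pi$ is an ideal this gives $\I_\mu\subseteq\ker\pi$.

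The substantive content is the strict inclusion. I would reduce it to showing that $\I_\mu$ is contained in the smaller subalgebra $\mathcal{J}_\mu:=C^\ast(\varphi(T_2(K_\mu(H))))$, and that $\ker\pi$ has an element outside $\mathcal{J}_\mu$. First one checks $\mathcal{J}_\mu$ is itself a closed two-sided ideal of $C^\ast(\varphi(T_2(K_\omega(H))))$: because $K_\mu(H)\triangleleft K_\omega(H)$, the algebra $T_2(K_\mu(H))$ is a two-sided operator-algebra ideal of $T_2(K_\omega(H))$; combining the multiplicative identities $\varphi_{11}(a)\varphi_{12}(b)=\varphi_{12}(ab)$, $\varphi_{12}(b)\varphi_{22}(d)=\varphi_{12}(bd)$, their adjoint versions, and an approximate identity for $K_\mu(H)$ handles the mixed products $\varphi(x)^\ast\varphi(y)$ with $y\in T_2(K_\mu(H))$ (the only subtle case is $\varphi_{12}(c)^\ast\varphi_{12}(p')$ for $c\in K_\omega(H)$, $p'\in K_\mu(H)$, which one rewrites as $\lim_\lambda\varphi_{12}(e_\lambda c)^\ast\varphi_{12}(p')$ with $e_\lambda c\in K_\mu(H)$). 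Every generator of $\I_\mu$ is manifestly built from $\varphi|_{T_2(K_\mu(H))}$-images and their adjoints, so $\I_\mu\subseteq\mathcal{J}_\mu$.

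It therefore suffices to exhibit an element of $\ker\pi\setminus\mathcal{J}_\mu$. Pick a projection $q\in K_\omega(H)$ of rank exactly $\mu$; then $q\in K_{\mu+1}(H)\subseteq K_\omega(H)$ while $q\notin K_\mu(H)$, since the range of $q$ has dimension $\mu$, not strictly less than $\mu$. Set $x:=(\varphi_{12}(q)^\ast\varphi_{12}(q)-\varphi_{22}(q))E_{22}$; the same calculation as in step (ii) puts $x\in\ker\pi$, and the task becomes $x\notin\mathcal{J}_\mu$.

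The main obstacle is exactly this last step. The naive test of composing $\pi$ with the quotient $M_2(K_\omega(H))\to M_2(K_\omega(H)/K_\mu(H))$ does not detect $x$, since $\pi(x)=0$. My approach is instead to interpret the quotient $R:=C^\ast(\varphi(T_2(K_\omega(H))))/\mathcal{J}_\mu$ as a $C^\ast$-cover of $T_2(K_\omega(H)/K_\mu(H))$---its generators are the well-defined classes $[\varphi_{ij}(c)]$ indexed by $c\in K_\omega(H)/K_\mu(H)$---and to show $R$ is not the $C^\ast$-envelope of $T_2(K_\omega(H)/K_\mu(H))$, so that the class $[x]$ (the analog of the obstruction $a_R([q],[q])$ at the non-zero $[q]$) is non-zero. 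To produce this witness, I would construct a u.c.c.\ representation of $T_2(K_\omega(H))$ that factors through the quotient $T_2(K_\omega(H))\to T_2(K_\omega(H)/K_\mu(H))$ and then through a non-envelope completely isometric representation of the quotient algebra whose associated $a([q],[q])$ is non-trivial (such a representation exists because $T_2(K_\omega(H)/K_\mu(H))$ is non-selfadjoint), lift it to a $\ast$-homomorphism on $C^\ast_{\max}(T_2(K_\omega(H)))$ via Lemma~\ref{lem:uni property}, and then verify that the resulting map descends through $\alpha_\varphi$ to a $\ast$-homomorphism of $C^\ast(\varphi(T_2(K_\omega(H))))$. Establishing this descent in the absence of $\varphi$ itself killing $T_2(K_\mu(H))$ is the technical heart of the argument.
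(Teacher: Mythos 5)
The routine parts of your plan are fine: $\I_\mu$ is an ideal by construction, the computation from Proposition~\ref{prop:ideal} with $c_1=c_2=p$ gives $\I_\mu\subseteq\ker\pi$, and your verification (via approximate identities and the multiplicative identities for $\varphi_{ij}$) that $\mathcal{J}_\mu:=C^\ast(\varphi(T_2(K_\mu(H))))$ is a closed two-sided ideal containing $\I_\mu$ is sound. The gap is in the only substantive step, producing an element of $\ker\pi$ outside $\mathcal{J}_\mu$. Your mechanism is to build a representation of $T_2(K_\omega(H))$ killing $T_2(K_\mu(H))$, lift it to $C^\ast_{\max}(T_2(K_\omega(H)))$, and then descend it to the \emph{given} cover $C^\ast(\varphi(T_2(K_\omega(H))))$. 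But by Proposition~\ref{Prop:Equi Lattice Ideals} a u.c.c.\ representation factors through a given C*-cover only when that cover dominates the cover the representation generates, i.e.\ only when the representation annihilates the boundary ideal corresponding to $\varphi$; since $\varphi$ is an arbitrary non-envelope representation there is no reason this holds, and you give no argument --- you flag it yourself as the ``technical heart,'' which is to say the proof is missing exactly where the proposition has content. There are secondary unproved assertions too: that the induced map $T_2(K_\omega(H)/K_\mu(H))\to R$ is completely isometric (so that $R$ is a C*-cover of the quotient at all), and that a non-envelope cover of the quotient can be arranged whose obstruction at your particular class $[q]$ is nonzero. Note also that your reduction targets a statement strictly stronger than the proposition ($\ker\pi\not\subseteq\mathcal{J}_\mu$ rather than $\ker\pi\neq\I_\mu$), and that you omit the degenerate case $\I_\mu=\{0\}$, where the claim is immediate since $\ker\pi\neq\{0\}$.

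For contrast, the paper's proof avoids representation theory entirely and runs on a distance estimate inside $C^\ast(\varphi(T_2(K_\omega(H))))$. Assuming $\I_\mu\neq\{0\}$, fix a projection $p\in K_\mu(H)$ with $x_p:=(\varphi_{12}(p)^\ast\varphi_{12}(p)-\varphi_{22}(p))E_{22}\neq 0$; conjugating by $\varphi(\diag(v,v))$ for partial isometries $v$ with $v^\ast v=p$, $vv^\ast=q$ shows $\|x_q\|\geq\|x_p\|>0$ for every projection $q$ equivalent to $p$. Then for a projection $r\in K_\omega(H)\setminus K_\mu(H)$ and any $a\in\I_\mu$, the containment $\I_\mu\subseteq M_2\bigl(C^\ast(\varphi_{11}(K_\mu(H))\cup\varphi_{12}(K_\mu(H))\cup\varphi_{22}(K_\mu(H)))\bigr)$ plus a cardinality argument (using $\aleph_0<\mu$) yields a subprojection $q\leq r$ equivalent to $p$ with $\varphi(\diag(q,q))\,a=a\,\varphi(\diag(q,q))=0$; compressing $x_r-a$ by $\varphi(\diag(q,q))$ gives $\|x_r-a\|\geq\|x_q\|\geq\|x_p\|>0$, so $x_r\in\ker\pi$ has positive distance from $\I_\mu$. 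If you wish to keep your framing, this compression/annihilation argument is the missing tool: it shows your witness lies outside the larger algebra $M_2(C^\ast(\bigcup_{i,j}\varphi_{ij}(K_\mu(H))))\supseteq\mathcal{J}_\mu$, replacing the representation-theoretic detection that cannot be made to work for an arbitrary $\varphi$.
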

\begin{proof}
By the previous proposition $\I_\mu \subseteq \ker \pi$ which means that it is certainly an ideal of $\ker \pi$. If $\I_\mu = \{0\}$ then the conclusion follows.

Assume now that $\I_\mu \neq \{0\}$. Therefore, there exists a projection $p\in K_\mu(H)$ such that 
\[
\varphi\left(\left[\begin{matrix} 0 & p \\ 0 & -p\end{matrix}\right]\right)^*\varphi\left(\left[\begin{matrix} 0 & p \\ 0 & p\end{matrix}\right]\right) \neq 0 \quad \textrm{or} \quad\varphi\left(\left[\begin{matrix} p & -p \\ 0 & 0\end{matrix}\right]\right)\varphi\left(\left[\begin{matrix} p & p \\ 0 & 0\end{matrix}\right]\right)^* \neq 0\,.
\]
Without loss of generality, assume the former is non-zero. Suppose $q\in K_\mu(H)$ is any other projection equivalent to $p$, so that there is a partial isometry $v$ satisfying $v^*v = p$ and $vv^*=q$. Then notice that
\begin{align*}
0 & \neq \varphi\left(\left[\begin{matrix} 0 & p \\ 0 & -p\end{matrix}\right]\right)^*\varphi\left(\left[\begin{matrix} 0 & p \\ 0 & p\end{matrix}\right]\right)
\\ & = \varphi\left(\left[\begin{matrix} 0 & v^*qv \\ 0 & -v^*qv\end{matrix}\right]\right)^*\varphi\left(\left[\begin{matrix} 0 & v^*qv \\ 0 & v^*qv\end{matrix}\right]\right)
\\ & = \varphi\left(\left[\begin{matrix} v^* & 0 \\ 0 & v^*\end{matrix}\right]\right)\varphi\left(\left[\begin{matrix} 0 & q \\ 0 & -q\end{matrix}\right]\right)^*\varphi\left(\left[\begin{matrix} vv^* & 0 \\ 0 & vv^*\end{matrix}\right]\right)\varphi\left(\left[\begin{matrix} 0 & q \\ 0 & q\end{matrix}\right]\right)\varphi\left(\left[\begin{matrix} v & 0 \\ 0 & v\end{matrix}\right]\right)
\\ & = \varphi\left(\left[\begin{matrix} v^* & 0 \\ 0 & v^*\end{matrix}\right]\right)\varphi\left(\left[\begin{matrix} 0 & q \\ 0 & -q\end{matrix}\right]\right)^*\varphi\left(\left[\begin{matrix} 0 & q \\ 0 & q\end{matrix}\right]\right)\varphi\left(\left[\begin{matrix} v & 0 \\ 0 & v\end{matrix}\right]\right)\,.
\end{align*}
In particular, this implies that 
\[
\varphi\left(\left[\begin{matrix} 0 & q \\ 0 & -q\end{matrix}\right]\right)^*\varphi\left(\left[\begin{matrix} 0 & q \\ 0 & q\end{matrix}\right]\right) \neq 0
\]

Now for a projection $r \in K_\omega(H) \setminus K_\mu(H)$ consider the element
\[
\varphi\left(\left[\begin{matrix} 0 & r \\ 0 & -r\end{matrix}\right]\right)^*\varphi\left(\left[\begin{matrix} 0 & r \\ 0 & r\end{matrix}\right]\right)\,,
\]
Because of the way $\I_\mu$ is constructed and since $K_\mu(H)$ is an ideal then
\[
\I_\mu \subseteq M_2(C^*(\{\varphi_{11}(K_\mu(H)), \varphi_{12}(K_\mu(H)), \varphi_{22}(K_\mu(H))\}))\,.
\]
Thus, for any $a\in \I_\mu$ by cardinality, and since $\aleph_0<\mu$, there must be a subprojection $q$ of $r$ that is equivalent to $p$ such that 
\[
\varphi\left(\left[\begin{matrix} q & 0 \\ 0 & q\end{matrix}\right]\right)a = a\varphi\left(\left[\begin{matrix} q & 0 \\ 0 & q\end{matrix}\right]\right) = 0\,.
\]
Hence,
\begin{align*}
& \left\|\varphi\left(\left[\begin{matrix} 0 & r \\ 0 & -r\end{matrix}\right]\right)^*\varphi\left(\left[\begin{matrix} 0 & r \\ 0 & r\end{matrix}\right]\right) - a\right\|
\\ & \quad \quad \quad \quad \quad \quad \geq \left\|\varphi\left(\left[\begin{matrix} q & 0 \\ 0 & q\end{matrix}\right]\right)^*\left(\varphi\left(\left[\begin{matrix} 0 & r \\ 0 & -r\end{matrix}\right]\right)^*\varphi\left(\left[\begin{matrix} 0 & r \\ 0 & r\end{matrix}\right]\right) - a\right)\varphi\left(\left[\begin{matrix} q & 0 \\ 0 & q\end{matrix}\right]\right)\right\| 
\\ & \quad \quad \quad \quad \quad \quad = \left\|\varphi\left(\left[\begin{matrix} 0 & q \\ 0 & -q\end{matrix}\right]\right)^*\varphi\left(\left[\begin{matrix} 0 & q \\ 0 & q\end{matrix}\right]\right)\right\|
\\ & \quad \quad \quad \quad \quad \quad \geq \left\|\varphi\left(\left[\begin{matrix} 0 & p \\ 0 & -p\end{matrix}\right]\right)^*\varphi\left(\left[\begin{matrix} 0 & p \\ 0 & p\end{matrix}\right]\right)\right\| > 0\,.
\end{align*}
Therefore, $\I_\mu\neq \ker \pi$.
\end{proof}

Now we can prove the main theorem of this section.

\begin{theorem}
There is no immediate successor to $M_2(K_\omega(H))$ in $\cstarlattice(T_2(K_\omega(H))$.
\end{theorem}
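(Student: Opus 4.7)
The plan is to show that every C*-cover of $A=T_2(K_\omega(H))$ lying strictly above the envelope $M_2(K_\omega(H))$ admits another C*-cover strictly between the two. The argument hinges on the fact that $\omega$ is a limit cardinal, so by Gramsch-Luft the ideal $K_\omega(H)$ is the norm-closure of $\bigcup_{\aleph_0<\mu<\omega}K_\mu(H)$, and has no immediate predecessor among closed ideals.

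Fix a completely isometric representation $\varphi$ of $A$ with $D=C^*(\varphi(A))$ strictly above the envelope, and let $\pi:D\to M_2(K_\omega(H))$ be the canonical morphism, so $\ker\pi\neq 0$. For each cardinal $\mu$ with $\aleph_0<\mu<\omega$, I would introduce the closed two-sided ideal $\widetilde{\I}_\mu$ of $D$ generated by
\[
(\varphi_{12}(c_1)^*\varphi_{12}(c_2)-\varphi_{22}(c_1^*c_2))E_{22} \quad\text{and}\quad (\varphi_{12}(c_1)\varphi_{12}(c_2)^*-\varphi_{11}(c_1c_2^*))E_{11}
\]
for all $c_1,c_2\in K_\mu(H)$. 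The proof of Proposition \ref{prop:ideal} adapts verbatim to show $\widetilde{\I}_\mu\subsetneq \ker\pi$: its generators all lie in $M_2(C^*(\varphi_{11}(K_\mu(H))\cup\varphi_{12}(K_\mu(H))\cup\varphi_{22}(K_\mu(H))))$, and the cardinality/compression argument using a projection $r\in K_\omega(H)\setminus K_\mu(H)$ produces an element of $\ker\pi$ at positive distance from $\widetilde{\I}_\mu$. Because any subideal of the boundary ideal $\ker\pi$ is itself a boundary ideal, $D/\widetilde{\I}_\mu$ is a C*-cover of $A$ strictly above the envelope, and it is strictly below $D$ exactly when $\widetilde{\I}_\mu\neq 0$.

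Next, norm-continuity of the sesquilinear expression $(c_1,c_2)\mapsto \varphi_{12}(c_1)^*\varphi_{12}(c_2)-\varphi_{22}(c_1^*c_2)$ (and its counterpart), combined with $K_\omega(H)=\overline{\bigcup_{\mu<\omega}K_\mu(H)}$ and Proposition \ref{prop:ideal}'s description of $\ker\pi$ as the ideal generated by such expressions over all of $K_\omega(H)$, yields
\[
\overline{\bigcup_{\aleph_0<\mu<\omega}\widetilde{\I}_\mu} \;=\; \ker\pi.
\]
Since $\ker\pi\neq 0$, some $\widetilde{\I}_\mu$ must be nonzero, and for such a $\mu$ the quotient $D/\widetilde{\I}_\mu$ is a C*-cover strictly between $D$ and the envelope, proving that no C*-cover is an immediate successor.

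The main obstacle will be verifying that the proof of Proposition \ref{prop:ideal} indeed extends cleanly from the projection-generated $\I_\mu$ to the element-generated $\widetilde{\I}_\mu$; specifically, one must check that the partial-isometry-and-compression step (which in Proposition \ref{prop:ideal} picks a subprojection $q\le r$ equivalent to a projection $p\in K_\mu(H)$) still exhibits an element of $\ker\pi$ bounded away from $\widetilde{\I}_\mu$ when the allowed arguments in the generators are relaxed from projections to arbitrary elements of $K_\mu(H)$.
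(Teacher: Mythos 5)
Your strategy is essentially the paper's: for each cardinal $\aleph_0<\mu<\omega$ one manufactures a boundary ideal of $D=C^\ast(\varphi(A))$ contained in, but not equal to, $\ker\pi$, and then uses the limit-cardinal structure of $K_\omega(H)$ to see that at least one of these ideals is nonzero, so that the corresponding quotient is a C*-cover strictly between $D$ and $M_2(K_\omega(H))$. The only substantive difference is bookkeeping: you generate $\widetilde{\I}_\mu$ by arbitrary elements of $K_\mu(H)$ and obtain nontriviality from density of $\bigcup_{\mu<\omega}K_\mu(H)$ in $K_\omega(H)$, whereas the paper generates $\I_\mu$ by projections and obtains nontriviality from an approximate unit of projections; both devices exploit that $\omega$ is a limit cardinal. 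The obstacle you flag at the end is real but is closed by exactly that approximate-unit trick, run inside $K_\mu(H)$: if $\widetilde{\I}_\mu\neq 0$, then some generator, say $\varphi_{12}(c_1)^\ast\varphi_{12}(c_2)-\varphi_{22}(c_1^\ast c_2)$ with $c_1,c_2\in K_\mu(H)$, is nonzero, and writing $c_i=\lim_\lambda p_\lambda c_i$ for an approximate unit $(p_\lambda)$ of projections of $K_\mu(H)$ shows that already some \emph{projection} $p\in K_\mu(H)$ satisfies $\varphi_{12}(p)^\ast\varphi_{12}(p)-\varphi_{22}(p)\neq 0$. Since $\widetilde{\I}_\mu$ is still contained in $M_2\bigl(C^\ast(\varphi_{11}(K_\mu(H))\cup\varphi_{12}(K_\mu(H))\cup\varphi_{22}(K_\mu(H)))\bigr)$ because $K_\mu(H)$ is an ideal, the paper's compression of the witness element built from a projection $r\in K_\omega(H)\setminus K_\mu(H)$ by a subprojection $q\leq r$ equivalent to $p$ and annihilating a given $a\in\widetilde{\I}_\mu$ yields the uniform lower bound $\bigl\|\varphi_{12}(p)^\ast\varphi_{12}(p)-\varphi_{22}(p)\bigr\|>0$ on $\dist(\,\cdot\,,\widetilde{\I}_\mu)$, word for word as in the projection-generated case. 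With that reduction to projections inserted, your outline is correct; note only that the properness argument you invoke is the content of the proposition following Proposition \ref{prop:ideal}, not of Proposition \ref{prop:ideal} itself, which identifies $\ker\pi$.
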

\begin{proof}
Suppose $\varphi$ is a completely isometric representation of $T_2(K_\omega(H))$ that does not generate the C$^*$-envelope as a C$^*$-cover. As before, let $\pi: C^*(\varphi(T_2(K_\omega(H)))) \rightarrow M_2(K_\omega(H))$ be the canonical surjective $*$-homomorphism such that $\pi\circ\varphi(A) = A$ for all $A\in T_2(K_\omega(H))$.

Now because $\ker \pi \neq \{0\}$ by Proposition \ref{prop:ideal} there exist $c_1,c_2\in K_\omega(H)$ such that
\[
\varphi\left(\left[\begin{matrix} 0 & c_1 \\ 0 & -c_1\end{matrix}\right]\right)^*\varphi\left(\left[\begin{matrix} 0 & c_2 \\ 0 & c_2\end{matrix}\right]\right) \neq 0 \quad \textrm{or} \quad\varphi\left(\left[\begin{matrix} c_1 & -c_1 \\ 0 & 0\end{matrix}\right]\right)\varphi\left(\left[\begin{matrix} c_2 & c_2 \\ 0 & 0\end{matrix}\right]\right)^* \neq 0\,.
\]
Suppose the first is non-zero and $\{p_\lambda\}_\lambda$ is a approximate unit of $K_\omega(H)$ consisting of projections. Thus,
\begin{align*}
\varphi\left(\left[\begin{matrix} 0 & c_1 \\ 0 & -c_1\end{matrix}\right]\right)^*\varphi\left(\left[\begin{matrix} 0 & c_2 \\ 0 & c_2\end{matrix}\right]\right)
& = \lim_\lambda \varphi\left(\left[\begin{matrix} 0 & p_\lambda c_1 \\ 0 & -p_\lambda c_1\end{matrix}\right]\right)^*\varphi\left(\left[\begin{matrix} 0 & p_\lambda c_2 \\ 0 & p_\lambda c_2\end{matrix}\right]\right)
\\ & = \lim_\lambda \varphi\left(\left[\begin{matrix} c_1^* & 0  \\ 0 & c_1^* \end{matrix}\right]\right)\varphi\left(\left[\begin{matrix} 0 & p_\lambda \\ 0 & -p_\lambda \end{matrix}\right]\right)^*\varphi\left(\left[\begin{matrix} 0 & p_\lambda  \\ 0 & p_\lambda \end{matrix}\right]\right)\varphi\left(\left[\begin{matrix} c_2 & 0  \\ 0 & c_2 \end{matrix}\right]\right)\,.
\end{align*}
In particular, there is some projection $p$ and cardinal $\mu<\omega$ such that $p\in \I_\mu$ and 
\[
\varphi\left(\left[\begin{matrix} 0 & p \\ 0 & -p\end{matrix}\right]\right)^*\varphi\left(\left[\begin{matrix} 0 & p  \\ 0 & p\end{matrix}\right]\right) \neq 0\,.
\]

By the previous proposition this implies that $I_\mu$ is a proper ideal of $\ker \pi$. Therefore, $C^*(T_2(K_\omega(H)))/I_\mu$ is a C$^*$-cover sitting strictly between $C^*(T_2(K_\omega(H)))$ and $M_2(K_\omega(H))$.
\end{proof}

\begin{remark}
There are also examples of separable, non-simple C*-algebras with no maximal closed ideals, see \cite{Leptin}. Hence, it seems likely that one could find a separable operator algebra which has no immediate successor to its C*-envelope in the lattice of C*-covers.
\end{remark}

\section{Semi-Dirichlet C*-covers}\label{Section:Semi-Dirichlet C*-covers}

The following argument answers \cite[Question 3.13]{HKR} which asks when $\cmax(A)$ is a semi-Dirichlet C*-cover. Obviously, there is some background material that needs to be added here. However, it does fit well with our current results since it is using the same trick of scaling the off-diagonal entry to generate new C*-covers. 

As defined in \cite{HKR}, a representation $\rho: A \rightarrow \B(H)$ is semi-Dirichlet if $\rho(A)^*\rho(A) \subseteq \overline{\rho(A) + \rho(A)^*}$. A C*-cover $[\C,\iota]$ is called semi-Dirichlet if $\iota$ is a semi-Dirichlet representation. An operator algebra $A$ is called semi-Dirichlet if and only if it has a semi-Dirichlet C*-cover.

\begin{theorem}\label{thm:semidirichlet}
    If $A$ is a non-selfadjoint, semi-Dirichlet operator algebra, then the maximal C*-cover is not a semi-Dirichlet C*-cover.
\end{theorem}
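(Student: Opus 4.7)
My plan is an argument by contradiction using the twist construction of Lemma \ref{lem:twist_dilation}. Suppose the maximal cover $(C^*_{\max}(A),i_{\max})$ were semi-Dirichlet. I will show that this forces every u.c.c.\ homomorphism of $A$ to be dilation-maximal, which contradicts the working criterion for non-selfadjointness used in the paper (e.g.\ at the end of the proof of Theorem \ref{thm:onepointlattice}): a non-selfadjoint operator algebra admits a non-maximal u.c.c.\ homomorphism.

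Fix any u.c.c.\ homomorphism $\rho:A\to\B(H)$ and, via Proposition \ref{prop:existence max dilation}, let $\sigma:A\to\B(K)$ be a maximal dilation of $\rho$. Sarason's Lemma yields $K=H_1\oplus H\oplus H_2$ and
\[
\sigma(a)=\begin{pmatrix}\pi_1(a)&\sigma_{12}(a)&\sigma_{13}(a)\\ 0&\rho(a)&\sigma_{23}(a)\\ 0&0&\pi_3(a)\end{pmatrix}.
\]
For each $z\in\overline{\bbD}$ the twisted map $\sigma_z$ of Lemma \ref{lem:twist_dilation} is still a u.c.c.\ homomorphism, so by the universal property of $C^*_{\max}(A)$ it extends to a $*$-homomorphism $\tilde\sigma_z$ on $C^*_{\max}(A)$; similarly $\rho$ and $\pi_3$ extend to $*$-homomorphisms $\tilde\rho$ and $\tilde\pi_3$.

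For each fixed $a\in A$, semi-Dirichletness of $i_{\max}$ supplies sequences $(c_n),(d_n)\subseteq A$ with
\[
i_{\max}(a)^*i_{\max}(a)=\lim_n\bigl(i_{\max}(c_n)+i_{\max}(d_n)^*\bigr)
\]
in $C^*_{\max}(A)$. Pushing this single identity forward separately through $\tilde\sigma_z$, $\tilde\rho$, and $\tilde\pi_3$ yields
\[
\sigma_z(a)^*\sigma_z(a)=\lim_n(\sigma_z(c_n)+\sigma_z(d_n)^*),\quad \rho(a)^*\rho(a)=\lim_n(\rho(c_n)+\rho(d_n)^*),
\]
together with the analogous identity for $\pi_3$, all with the \emph{same} witnesses $(c_n),(d_n)$ working uniformly in $z$. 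This uniformity is what drives the argument, and is precisely what the universal property of the \emph{maximal} cover buys us.

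Now I compare matrix blocks. The $(2,2)$-entry of the first identity reads $|z|^2\sigma_{12}(a)^*\sigma_{12}(a)+\rho(a)^*\rho(a)=\rho(a)^*\rho(a)$, so $\sigma_{12}(a)=0$; the $(3,3)$-entry reads $|z|^4\sigma_{13}(a)^*\sigma_{13}(a)+|z|^2\sigma_{23}(a)^*\sigma_{23}(a)=0$ for every $z\in\overline{\bbD}$, forcing $\sigma_{13}(a)=\sigma_{23}(a)=0$. Hence $\sigma$ is block-diagonal on $A$, so $H$ reduces $\sigma$ and $\rho$ is a trivial compression; repeating this for an arbitrary dilation of $\rho$ (after enlarging it to its own maximal dilation) shows that $\rho$ itself is dilation-maximal. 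Since $\rho$ was arbitrary, no non-maximal u.c.c.\ homomorphism of $A$ exists, contradicting non-selfadjointness. The main obstacle I anticipate is conceptual rather than computational: the heart of the proof is recognising that the universal property of $C^*_{\max}(A)$ forces a \emph{single} semi-Dirichlet witness $(c_n),(d_n)$ to serve the entire family $\{\sigma_z\}_{z\in\overline{\bbD}}$ simultaneously, so that the $|z|^{2k}$-dependence in the diagonal blocks separates the three off-diagonal corners $\sigma_{ij}$ and annihilates each of them individually.
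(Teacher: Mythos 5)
Your proof is correct and is essentially the paper's own argument: both rest on the observation that semi-Dirichletness of $\iota_{\max}$ together with the universal property of $C^*_{\max}(A)$ forces a single approximating sequence to serve every representation simultaneously, after which scaling the off-diagonal entries of a dilation separates the diagonal corners and annihilates the off-diagonal blocks. The only difference is packaging: the paper compares a nontrivial $2\times 2$ extension (or coextension) of one non-maximal representation at scales $1$ and $\tfrac{1}{2}$, whereas you run the full $3\times 3$ Sarason twist $\sigma_z$ on an arbitrary dilation to conclude that every representation is maximal; both routes then contradict the existence of a non-maximal representation guaranteed by non-selfadjointness.
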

\begin{proof}

By contradiction, assume that $[\cmax(A),\iota_{\max}]$ is semi-Dirichlet. Suppose $\varphi:A \rightarrow \B(H)$ is any completely contractive representation. By universality, there exists $\pi: \cmax(A) \rightarrow C^*(\rho(A))$ such that $\pi\circ \iota_{\max} = \rho$. Then $\rho$ must be a semi-Dirichlet representation:
\begin{align*}
    \rho(a)^*\rho(b) & = \pi(\iota_{\max}(a)^*\iota_{\max}(b))
    \\ & = \pi\left( \lim_{n\rightarrow \infty} \iota_{\max}(c_n) + \iota_{\max}(d_n)^*\right)
    \\ & = \lim_{n\rightarrow \infty} \rho(c_n) + \rho(d_n)^*\,.
\end{align*}
We will see that it is impossible that all representations are semi-Dirichlet.

Now, by \cite[Proposition 3.4]{HumRam}, since $A$ is non-selfadjoint, there exists a non-maximal, completely contractive representation $\varphi : A \rightarrow \B(H)$.
Non-maximality implies that $\varphi$ either extends or coextends non-trivially.

First, assume that $\varphi$ extends non-trivially to a completely contractive representation $\Phi : A \rightarrow \B(K)$ with $K = H \oplus H^\perp$ and block structure
\[
\Phi = \left[\begin{matrix} \varphi & \Phi_{12} \\ 0& \Phi_{22} \end{matrix}\right]
\]
where $\Phi_{12} \neq 0$.
Define $\Phi' : A \rightarrow \B(K)$ 
\[
\Phi' = \left[\begin{matrix} \varphi & \frac{1}{2}\Phi_{12} \\ 0& \Phi_{22} \end{matrix}\right]\,.
\]
Both $\Phi$ and $\Phi'$ are semi-Dirichlet representations by assumption.

Now, there exists $a\in A$ such that $\Phi_{12}(a)\neq 0$. By the semi-Dirichlet property for $\iota_{\max}$ there exist $b_n\in A$ such that
\begin{align*}
\iota_{\max}(a)^*\iota_{\max}(a) &= \lim_{n\rightarrow \infty} \iota_{\max}(b_n) + \iota_{\max}(b_n)^*\,.
\end{align*}
By universality, there exists $*$-homomorphisms $\pi,\pi':\cmax(A) \rightarrow \B(K)$ such that $\pi\circ\iota_{\max} = \Phi$ and $\pi'\circ\iota_{\max} = \Phi'$.
Following the argument at the start of this proof,
\begin{align*}
\Phi(a)^*\Phi(a) &= \lim_{n\rightarrow \infty} \Phi(b_n) + \Phi(b_n)^*, \quad \textrm{and}
\\ \Phi'(a)^*\Phi'(a) &= \lim_{n\rightarrow \infty} \Phi'(b_n) + \Phi'(b_n)^*\,.
\end{align*}
But this implies that
\begin{align*}
    \left[\begin{matrix} \varphi(a)^*\varphi(a) & \varphi(a)^*\Phi_{12}(a) \\ \Phi_{12}(a)^*\varphi(a) & \Phi_{12}(a)^*\Phi_{12}(a) + \Phi_{22}(a)^*\Phi_{22}(a)\end{matrix}\right]
    &= \lim_{n\rightarrow \infty} \left[\begin{matrix} \varphi(b_n) + \varphi(b_n)^* & \Phi_{12}(b_n)  \\ \Phi_{12}(b_n)^* & \Phi_{22}(b_n) + \Phi_{22}(b_n)^* \end{matrix}\right]
\end{align*}
and 
\begin{align*}
    \left[\begin{matrix} \varphi(a)^*\varphi(a) & \frac{1}{2}\varphi(a)^*\Phi_{12}(a) \\ \frac{1}{2}\Phi_{12}(a)^*\varphi(a) & \frac{1}{4}\Phi_{12}(a)^*\Phi_{12}(a) + \Phi_{22}(a)^*\Phi_{22}(a)\end{matrix}\right]
    &= \lim_{n\rightarrow \infty} \left[\begin{matrix} \varphi(b_n) + \varphi(b_n)^* & \frac{1}{2}\Phi_{12}(b_n)  \\ \frac{1}{2}\Phi_{12}(b_n)^* & \Phi_{22}(b_n) + \Phi_{22}(b_n)^* \end{matrix}\right]\,.
\end{align*}
Looking at the (2,2)-entries of both equations we see that $\frac{3}{4}\Phi_{12}(a)^*\Phi_{12}(a) = 0$ and so $\Phi_{12}(a) = 0$, a contradiction.

Essentially the same argument works in the case that $\varphi$ has a nontrivial coextension, in which case the operator matrices involved are lower triangular, so we will omit those details.

Therefore, it cannot be the case that both $\Phi$ and $\Phi'$ are semi-Dirichlet representations, which implies that $[\cmax(A),\iota_{\max}]$ is not a semi-Dirichlet C*-cover.
\end{proof}

\begin{corollary}
If $A$ is a non-selfadjoint, semi-Dirichlet operator algebra, then it has an uncountable lattice of C*-covers.
\end{corollary}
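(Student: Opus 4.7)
The plan is to combine Theorem \ref{thm:semidirichlet} with the lattice dichotomy Theorem \ref{thm:uncountable_lattice}. The key observation is that being semi-Dirichlet is a hypothesis on the operator algebra $A$ itself, which by definition means that $A$ admits at least one semi-Dirichlet C*-cover, while Theorem \ref{thm:semidirichlet} produces a specific C*-cover (the maximal one) that is guaranteed to fail this property. Together these two facts force the existence of at least two inequivalent C*-covers.

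More precisely, first I would invoke the definition of semi-Dirichlet to fix a semi-Dirichlet C*-cover $[\C, \iota] \in \cstarlattice(A)$. Next, Theorem \ref{thm:semidirichlet} tells us that $[\cmax(A), \iota_{\max}]$ is \emph{not} a semi-Dirichlet C*-cover. Since the property of being semi-Dirichlet is plainly invariant under the equivalence relation on C*-covers (a $\ast$-isomorphism between C*-covers intertwining the embeddings preserves adjoints and closed spans), we conclude $[\C, \iota] \neq [\cmax(A), \iota_{\max}]$, so $|\cstarlattice(A)| \geq 2$.

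Finally, I would apply Theorem \ref{thm:uncountable_lattice}, which asserts that whenever $\cstarlattice(A)$ has more than one element its cardinality is at least the continuum $\mathfrak{c}$. In particular $\cstarlattice(A)$ is uncountable, which is the desired conclusion. There is no real obstacle here: the corollary is essentially a one-line consequence of the two cited theorems, with the only subtlety being the quick verification that the semi-Dirichlet property descends to equivalence classes of C*-covers.
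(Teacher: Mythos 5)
Your proposal is correct and follows essentially the same route as the paper: exhibit one semi-Dirichlet cover and note that Theorem \ref{thm:semidirichlet} shows the maximal cover is not one, so the lattice has at least two points and Theorem \ref{thm:uncountable_lattice} applies. (The paper uses the C*-envelope as its semi-Dirichlet cover, whereas you take an arbitrary one straight from the definition and explicitly check that the semi-Dirichlet property is an invariant of the equivalence class; this is a harmless and slightly more careful variant of the same argument.)
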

\begin{proof}
By definition $[C^*_e(A),\iota_e]$ is a semi-Dirichlet cover and $[\cmax(A),\iota_{\max}]$ is not a semi-Dirichlet cover. Since there are two points in the lattice then there are an uncountable number by Theorem \ref{thm:uncountable_lattice}.
\end{proof}

\section*{Acknowledgements}
The first author was supported by the NSERC Discovery Grant 2024-03883. The second author was supported by the NSERC Discovery Grant 2019-05430. The third author was supported by the Emmy Noether Program of the German Research Foundation (DFG Grant 466012782).


\begin{thebibliography}{99}

 \bibitem{Arv1} W. Arveson, \textit{Subalgebras of C$^*$-algebras}, Acta Mathematica {\bf 123} (1969), no. 1, 141–224.

 \bibitem{Arv2} W. Arveson, \textit{An Invitation to C$^\ast$-Algebras}, Graduate Texts in Mathematics, Vol.~39, Springer-Verlag, New York, 1976.

 \bibitem{Blech} D. Blecher, \textit{Modules over operator algebras and the maximal C$^*$-dilation}, J. Func. Anal. {\bf 169} (1999), 251-288.

 \bibitem{Blecher} D. Blecher and C. Le Merdy, \textit{Operator algebras and their modules: an operator space approach}, Oxford University Press (2004), 30.


\bibitem{CourtneySherman}
K. Courtney and D. Sherman. \textit{The universal C*-algebra of a contraction}, J. Operator Theory {\bf 84} (2020), 153-184.

\bibitem{Dav}
K.~R. Davidson, {\it $C^*$-algebras by example}, Fields Institute Monographs, 6, Amer. Math. Soc., Providence, RI, 1996; MR1402012


\bibitem{DriMcc}
M. Dritschel and S. McCullough, \textit{Boundary representations for families of representations of operator algebras and spaces}, J. Op. Thy. (2005), 159–167.

\bibitem{Gramsch}
B. Gramsch, {\it Eine Idealstruktur Banachscher Operatoralgebren}, J. Reine Angew. Math. {\bf 225} (1967), 97–115.


 \bibitem{Hamana}
 M. Hamana, \textit{Injective envelopes of operator systems}, Publications of the Research
 Institute for Mathematical Sciences {\bf 15} (1979), no. 3, 773–785.

\bibitem{Hamidi}
M. Hamidi, \textit{Admissibility of C$^*$-covers and crossed products of operator algebras}, Ph.D. Thesis, \url{https://digitalcommons.unl.edu/mathstudent/95} (2019).


\bibitem{HKR}
     A. Humeniuk, E. Katsoulis, and C. Ramsey, \textit{Crossed products and C*-covers of semi-Dirichlet operator algebras}, Doc. Math. {\bf 30} (2025), 909-933.

\bibitem{HumRam}
A. Humeniuk and C. Ramsey, \textit{The lattice of C$^*$-covers of an operator algebra}, Canad. J. Math. (2025), pp. 1-29.

\bibitem{HumRamThom}
A. Humeniuk, C. Ramsey, and I. Thompson, \textit{Couniversality for C*-algebras of residually finite-dimensional operator algebras}, preprint arXiv:2507.11824 (2025).

\bibitem{Kacnelson} 
      V.~E. Katsnelson, \textit{A remark on canonical factorization in certain spaces of analytic functions}, Zap. Nau\v cn. Sem. Leningrad. Otdel. Mat. Inst. Steklov. (LOMI) {\bf 30} (1972), 163--164; MR0338819.

\bibitem{KatRamMem} E. Katsoulis and C. Ramsey, \textit{Crossed products of operator algebras}, Mem. Amer. Math. Soc {\bf 258} (2019), no. 1240, vii+85 pp.

\bibitem{KirWas}
E. Kirchberg and S. Wassermann, \textit{C$^*$-algebras generated by operator systems}, J. Func. Anal. {\bf 155} (1998), 324-351.

\bibitem{Leptin}
H. Leptin, \textit{A separable postliminal C*-algebra without maximal closed ideals}, Trans. Amer. Math. Soc. {\bf 159} (1971), 489-496. 


\bibitem{Luft}
E. Luft, \textit{The two-sided closed ideals of the algebra of bounded linear operators of a Hilbert space}, Czech. Math. J. {\bf 18} (1968), 595–605.

\bibitem{Murphy}
G.~J. Murphy, {\it $C^*$-algebras and operator theory}, Academic Press, Boston, MA, 1990; MR1074574


\bibitem{Sarason}
      D. Sarason, \emph{On spectral sets having connected complement}, Acta Sci. Math. (Szeged), \textbf{26}, 289-299.

\bibitem{Thompson} I. Thompson, \textit{Maximal C$^*$-covers and residual finite-dimensionality}, J. Math. Anal. App. {\bf 514} (2022), 126277.


\bibitem{Willard} S. Willard, \textit{General topology}, Reprint of the 1970 original [Addison-Wesley, Reading, MA; MR0264581], 2004, xii+369, MR2048350.


      
\end{thebibliography}
\end{document}